\theoremstyle{definition}
\newtheorem{lemma}{Lemma}[section] 
\newtheorem{lause}[lemma]{Theorem}
\newtheorem{corollary}[lemma]{Corollary}
\newtheorem{remark}[lemma]{Remark}
\title{On the explicit upper and lower bounds for the number of zeros of the Selberg class}
\date{}
\author{Neea Paloj\"arvi \footnote{This work was supported by the Vilho, Yrj\"o and Kalle V\"ais\"al\"a Foundation of the Finnish Academy of Science and Letters.}}
\affil{Department of Mathematics and Statistics, \AA bo Akademi University, Domkyrkotorget 1, 20500 \AA bo, Finland, neea.palojarvi@abo.fi}
\begin{document}

\maketitle 
\section*{Abstract}

In this paper we prove explicit upper and lower bounds for the error term in the Riemann-von Mangoldt type formula for the number of zeros inside the critical strip. Furthermore, we also give examples of the bounds.

\section{Introduction}
\label{intro}

The Selberg class $S$, defined by Selberg \protect\cite{selberg2}, consists of functions $\mathcal{L}(s)=\sum_{n=1}^{\infty} \frac{a(n)}{n^s}$ which satisfy the following conditions:
\begin{enumerate}
		\item Ramanujan hypothesis: For any $\epsilon>0$ we have $|a(n)| \ll_{\epsilon} n^{\epsilon}$.
		\item \label{analytic} Analytic continuation: There is an integer $k \ge 0$ such that $(s-1)^k\mathcal{L}(s)$ is an entire function of finite order.
		\item Functional equation: There exists a positive integer $f$ and a real number $Q$ and for integer $j \in [1,f]$ there are positive real numbers $\lambda_j$ and complex 			numbers $\omega$, $\mu_j$,  $d_{\mathcal{L}}=2\sum\limits_{j=1}^{f} \lambda_j$, $\lambda=\prod\limits_{j=1}^{f} \lambda_j^{2\lambda_j}$  where $|\omega|=1$ 			and $\Re(\mu_j) \ge 0$ which satisfy
		\begin{equation*}
				\Lambda_{\mathcal{L}}(s)=\omega \overline{\Lambda_{\mathcal{L}}(1-\bar{s})}
		\end{equation*}
		where
		\begin{equation*}
				\Lambda_{\mathcal{L}}(s)=\mathcal{L}(s)Q^s\prod\limits_{j=1}^{f} \Gamma(\lambda_js+\mu_j).
		\end{equation*}
		\item Euler product: We have
		\begin{displaymath}
				\mathcal{L}(s)=\prod_{p} \mathcal{L}_p(s),
		\end{displaymath}
		where
		\begin{displaymath}
				\mathcal{L}_p(s)= \exp\left(\sum\limits_{l=1}^{\infty} \frac{b(p^l)}{p^{ls}}\right)
		\end{displaymath}
		with some coefficients $b(p^l)$ satisfying $b(p^l) \ll p^{l\theta}$ for some $\theta <\frac{1}{2}$.
\end{enumerate}

We denote $s=\sigma+it$ where $\sigma$ and $t$ are real numbers. We also choose the principal branch of the logarithm $\log {\mathcal{L}(s)}$ on the real axis as $\sigma \to \infty$. For other points we use the analytic continuation of the logarithm.

The zeros $s=-\frac{l+\mu_j}{\lambda_j}$ where $l=0, 1, 2,\ldots$ and $j \in [1,f]$ of the function $\mathcal{L}(s)$ are called trivial zeros. We notice that for all trivial zeros it holds that $\sigma \le 0$. Since we suppose that the function has an Euler product, we know that $\mathcal{L}(s)$ does not have zeros for which $\sigma>1$ and $a(1)=1$. The zeros which lie in the strip $0\le \sigma \le 1$ are called non-trivial zeros. The function may have a trivial and a non-trivial zero at the same point. By \cite{conrey} if $d_\mathcal{L}=0$ then $\mathcal{L}(s)\equiv 1$ and if $d_\mathcal{L}>0$ then $d_\mathcal{L}\ge 1$. Since the number of zeros for $\mathcal{L}(s)\equiv 1$, we concentrate to the cases $d_\mathcal{L}\ge 1$.

Riemann-von Mangoldt formula \cite{mangoldt1, mangoldt2} describes the number of the zeros of the Riemann zeta function inside the certain strip. According to this formula, the number of the zeros $\rho$ for which $0 \le \Im(\rho) \le T$ for a positive real number $T$ is
\begin{equation*}
		\frac{T}{2\pi}\log{\frac{T}{2\pi}}-\frac{T}{2\pi}+O(\log{T}).
\end{equation*} 
A. Selberg \cite{selberg} and A. Fujii \cite{fujii} proved similar formulas for the zeros of the $L$-functions inside the critical strip for height $T$ and $[T,T+H]$ respectively. J. Steuding \cite{steuding2} and L. Smaljovi\'{c} \cite{smaljovic} estimated the number of the zeros of the Selberg class functions. The number of the zeros $\rho$ of the function $\mathcal{L} \in S$, for which  $0 \le \Im(\rho) \le T$ or  $-T \le \Im(\rho) \le 0$, is
\begin{equation*}
		\frac{d_{\mathcal{L}}}{2\pi}T\log{\frac{T}{e}}+\frac{T}{2\pi}\log{(\lambda Q^2)}+O(\log{T}).
\end{equation*}
In this paper we are interested in the non-trivial zeros for which $T_0 < \Im(\rho) \le T$ or $-T \le \Im(\rho) < T_0$ when $T> T_0$ is large enough. We prove that
\begin{equation*}
		\left|\mathcal{N}_{\mathcal{L}}^{\pm}(T_0,T) -\frac{d_\mathcal{L}}{2\pi}T\log{\frac{T}{e}}-\frac{T}{2\pi}\log({\lambda Q^2})\right|
		<c_{\mathcal{L},1}\log{T}+c_{\mathcal{L},2}(T_0)+\frac{c_{\mathcal{L},3}(T_0)}{T},
\end{equation*}
where $\mathcal{N}_{\mathcal{L}}^{\pm}(T_0,T)$ denotes the number of the non-trivial zeros of the function $\mathcal{L} \in S$ for $T_0 < \Im(\rho) \le T$ or $-T \le \Im(\rho) < T_0$ when $T>T_0$ is large enough. The terms $c_{\mathcal{L},j}(T_0)$ are real numbers which depend on the function $\mathcal{L}$ and the number $T_0$ and the real number $c_{\mathcal{L},1}$ depends only on the function $\mathcal{L}$. This formula involves no unknown, undefined constants in the error term.This is proved in Theorem \ref{mainResult}. In 1916 R. J. Backlund \cite{backlund} proved similar formula for the Riemann zeta function. E. Carneiro and R. Finder \cite{carneiro}, \cite{finder} proved an explicit bound for the function $S_1(t,\pi)=\frac{1}{\pi}\int_{\frac{1}{2}}^{\infty}\log{|L(\sigma+it,\pi)|}d\sigma$, where $L(s, \pi)$ is in a subset of the $L$-functions, assuming generalized Riemann hypothesis. There are also explicit upper bounds for the number of the zeros for the Riemann zeta-function assuming Riemann hypothesis \cite{carneiro2} and along the critical line \cite{trudgian} and $L$-functions \cite{carneiro3}. G. {Fran{\c c}a} and A. {LeClair} \cite{franca} proved an exact equation for the $n$th zero of the $L$-functions on the critical line. There are also explicit results for Dirichlet $L$-functions and Dedekind zeta-functions, see for example the papers from K. S. McCurley \cite{mccurley} and T. S. Trudgian \cite{trudgian2}.

In the Section \ref{preliminaries} we prove lemmas which are used in the other sections. We also prove a formula which describes the sum of the real parts of the zeros inside certain strip. The sum depends on the four integrals of the function $\mathcal{L}(s)$. The main result follows from the estimates of the integrals. In the Sections \ref{estimatesI2I1} and \ref{estimatesI3I4} we estimate the integrals. In the Section \ref{sectionMainResult} we combine the results of the Sections \ref{preliminaries}, \ref{estimatesI2I1} and \ref{estimatesI3I4} and prove the main result. We follow the proofs of the article \cite{steuding2} and the chapters $6$ and $7$ of the book \cite{steuding}. In the Section \ref{numericalExample} we give examples of the main result.

\section{Preliminary results}
\label{preliminaries}

In this section we prove some preliminary results which are needed to prove the main theorem. The results are used in the Sections \ref{estimatesI2I1}, \ref{estimatesI3I4} and \ref{sectionMainResult}.

\subsection{Basic theory of the function $\mathcal{L}(s)$}

To shorten our notation we define 
\begin{equation*}
		v=\sum\limits_{j=1}^{f}\lambda_j\log\lambda_j, u=\sum\limits_{j=1}^{f}\left(\bar{\mu}_j-\frac{1}{2}\right)\log\lambda_j \text{ and } \mu=4\sum\limits_{j=1}^{f}				\left(\frac{1}{2}-\mu_j\right).	
\end{equation*} 
By the Ramanujan hypothesis ($\epsilon=1$) there is a constant $a_1$ such that for all $n$ we have $|a(n)|\le a_1n$. Note that $a_1 \ge 1$ since we have assumed that $a(n)=1$. Let $a$ be a real number for which $a>2$ and 
\begin{equation*}
		\sum\limits_{n=2}^{\infty}\frac{a_1}{n^{a}}<\frac{1}{2}
\end{equation*}
and let $b<-3$ be a negative real number which has the following property: 
\begin{equation*}
		\sum\limits_{n=2}^{\infty}\frac{a_1}{n^{-b-1}}<1.
\end{equation*} 
In the chapter $7.1$ of the book \cite{steuding} it is proved that: 
\begin{lemma}
\label{summaIntegraalit}
Let $T_0$ and $T>T_0$ be a positive real numbers. Let number $\rho$ denote the zero of the function $\mathcal{L} \in S$. Then
\begin{equation*}
		\begin{aligned}
				2\pi\sum_{\substack{T_0 < \Im(\rho) \le T \\ \Re(\rho)>b}} \left(\Re(\rho)-b\right) &= \int_{T_0}^{T} \log|\mathcal{L}(b+it)| dt-\int_{T_0}^{T}\log|						\mathcal{L}(a+it)| dt \\
				& \quad-\int_{b}^{a} \arg\mathcal{L}(\sigma+iT_0) d\sigma+\int_{b}^{a} \arg\mathcal{L}(\sigma+iT) d\sigma \\
				&\coloneqq I_1(T_0,T,b)+I_2(T_0,T,a)-I_3(T_0,a,b)+I_3(T,a,b).
		\end{aligned}
\end{equation*}
\end{lemma}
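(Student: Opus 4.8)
The plan is to apply the argument principle to the function $\mathcal{L}(s)$ over a rectangular contour and relate the resulting integral to the sum of the real parts of the zeros. Let $R$ denote the positively oriented boundary of the rectangle with vertices $b+iT_0$, $a+iT_0$, $a+iT$ and $b+iT$, where $a>2$ and $b<-3$ are the fixed numbers introduced above. Because $a>2$ places the right edge to the right of the region where $\mathcal{L}(s)$ has nontrivial zeros, and $b<-3$ is chosen so that the relevant sums of coefficients are controlled, all zeros counted on the left-hand side lie strictly inside $R$ (we assume, as is standard, that neither $T_0$ nor $T$ is the ordinate of a zero). The key idea is not simply to count zeros but to weight each zero $\rho$ by the horizontal distance $\Re(\rho)-b$ from the left edge of the rectangle; this weighting is exactly what a suitable contour integral of $\log\mathcal{L}(s)$ produces.

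The central computation is to integrate $\log\mathcal{L}(s)$ around $R$. First I would recall Littlewood's lemma (this is the device used in Chapter~7.1 of \cite{steuding}, which the statement cites): if $F(s)$ is analytic and nonzero on and inside a rectangle except for zeros in the interior, then
\begin{equation*}
		\int_{\partial R} \log F(s)\, ds = -2\pi i \sum_{\rho \in R} \big(\Re(\rho)-b\big),
\end{equation*}
where the sum is over zeros $\rho$ inside $R$, counted with multiplicity, and the left edge of $R$ is the line $\Re(s)=b$. Taking real parts on both sides converts the contour integral into the real quantity on the left-hand side of the lemma. Concretely, I would parametrize the four edges separately: on the two horizontal edges $\Im(s)=T_0$ and $\Im(s)=T$ the integrand contributes the $\arg\mathcal{L}$ integrals (after matching real and imaginary parts, $\Re(\log\mathcal{L})=\log|\mathcal{L}|$ and $\Im(\log\mathcal{L})=\arg\mathcal{L}$, together with the $ds=d\sigma$ along horizontal segments), while on the two vertical edges $\Re(s)=b$ and $\Re(s)=a$ one obtains the $\log|\mathcal{L}|$ integrals. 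Careful bookkeeping of the orientation and of the factor $ds=i\,dt$ on vertical edges versus $ds=d\sigma$ on horizontal edges yields precisely the four terms $I_1+I_2-I_3(T_0,a,b)+I_3(T,a,b)$ with the signs displayed in the statement.

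The main obstacle will be justifying the application of Littlewood's lemma, since $\log\mathcal{L}(s)$ is multivalued and $\mathcal{L}$ has zeros inside the contour precisely where the logarithm is singular. The standard resolution is to cut the rectangle along horizontal segments extending leftward from each zero to the edge $\Re(s)=b$ and to integrate $\log\mathcal{L}$ along a deformed contour that avoids the zeros; the jump of $2\pi i$ in $\arg\mathcal{L}$ across each such cut is what produces the weighted sum $\sum(\Re(\rho)-b)$. One must also confirm that the branch of the logarithm is the principal one fixed on the real axis as $\sigma\to\infty$ (as specified in the introduction) and that $\mathcal{L}(a+it)\neq 0$ throughout, which holds because $a>2>1$ lies in the region of absolute convergence of the Euler product where $\mathcal{L}(s)\neq 0$. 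Since the lemma is explicitly attributed to \cite{steuding}, I would present the contour setup and the edge-by-edge identification of the four integrals, and cite the book for the detailed verification of Littlewood's lemma in this setting rather than reproving it in full.
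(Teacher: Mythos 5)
Your proposal is correct and follows essentially the same route as the paper, which does not prove the lemma itself but cites Chapter 7.1 of Steuding's book, where precisely this application of Littlewood's lemma to $\mathcal{L}(s)$ on the rectangle with vertices $b+iT_0$, $a+iT_0$, $a+iT$, $b+iT$ is carried out. The only slip is that one extracts the weighted zero sum by taking \emph{imaginary} parts of $\int_{\partial R}\log\mathcal{L}(s)\,ds=-2\pi i\sum(\Re(\rho)-b)$ rather than real parts (the right-hand side is purely imaginary), which is consistent with your subsequent edge-by-edge identification of the horizontal edges with the $\arg\mathcal{L}$ integrals and the vertical edges with the $\log|\mathcal{L}|$ integrals.
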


The goal is to estimate the integrals and get the main result by using these estimates. By the functional equation we have 
\begin{equation*}
		\mathcal{L}(s)=\Delta_{\mathcal{L}}(s)\overline{\mathcal{L}(1-\bar{s})},
\end{equation*}
where 
\begin{equation}
\label{deltaMaaritelma}
		\Delta_{\mathcal{L}}(s)=\omega Q^{1-2s}\prod\limits_{j=1}^{f}\frac{\Gamma(\lambda_j(1-s)+\bar{\mu}_j)}{\Gamma(\lambda_js+\mu_j)}.
\end{equation}
We use this formula to estimate the term $\log{|\Delta_{\mathcal{L}}(s)|}$. We need this when we estimate the terms $I_1(T_0,T,b)-I_1(T_0,T,b+1)$, $I_3(T_0,a,b)$ and $I_3(T,a, b)$. To do this we define that $B_n$ is $n$th Bernoulli number and we need the following lemma from T. J. Stieltjes \cite[paragraph 9]{stieltjes}:

\begin{lemma}
\label{Stirling}
		Let $z$ be a complex number such that $|\arg(z)|<\pi$. Then for $N=1,2,\ldots$
		\begin{equation*}
				\log{\Gamma(z)}=z\log{z}-z+\frac{1}{2}\log{\frac{2\pi}{z}}+\sum_{n=1}^{N-1}\frac{B_{2n}}{2n(2n-1)z^{2n-1}}+W_N(z),
		\end{equation*}
		where
		\begin{equation*}
				|W_N(z)| \le \frac{B_{2N}}{2N(2N-1)|z|^{2N-1}}\sec^{2N}\left(\frac{\arg{z}}{2}\right)
		\end{equation*}
		is a holomorphic function.
\end{lemma}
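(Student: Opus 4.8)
The plan is to recognise the statement as the remainder estimate for Stirling's asymptotic expansion of the Binet function, and to derive it from an exponential integral representation of $\log\Gamma$. Since $z\log z-z+\tfrac12\log\tfrac{2\pi}{z}=\bigl(z-\tfrac12\bigr)\log z-z+\tfrac12\log(2\pi)$, I set
\[
\mu(z)\coloneqq\log\Gamma(z)-\Bigl(z-\tfrac12\Bigr)\log z+z-\tfrac12\log(2\pi),
\]
so the claim reads $\mu(z)=\sum_{n=1}^{N-1}\frac{B_{2n}}{2n(2n-1)z^{2n-1}}+W_N(z)$ with the asserted bound on $W_N$. I would start from Binet's first formula
\[
\mu(z)=\int_0^\infty g(t)\,\frac{e^{-zt}}{t}\,dt,\qquad g(t)=\frac1{e^t-1}-\frac1t+\frac12,\quad \Re(z)>0,
\]
together with the generating identity $g(t)=\sum_{n\ge1}\frac{B_{2n}}{(2n)!}t^{2n-1}$, which is the source of the Bernoulli numbers. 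The exponential factor (rather than the $\arctan$ kernel of Binet's second formula) is what will later make the continuation to the whole cut plane possible.

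Splitting $g(t)/t=\sum_{n=1}^{N-1}\frac{B_{2n}}{(2n)!}t^{2n-2}+R(t)$ with $R(t)=\sum_{n\ge N}\frac{B_{2n}}{(2n)!}t^{2n-2}$ and integrating the finite part term by term, the elementary evaluation $\int_0^\infty t^{2n-2}e^{-zt}\,dt=(2n-2)!\,z^{-(2n-1)}$ yields exactly $\frac{B_{2n}}{2n(2n-1)z^{2n-1}}$ for $n=1,\dots,N-1$, and leaves
\[
W_N(z)=\int_0^\infty R(t)\,e^{-zt}\,dt.
\]
For $\Re(z)>0$ this integral converges ($R$ grows only polynomially) and, by differentiation under the integral sign, represents a holomorphic function; its holomorphic continuation to $\lvert\arg z\rvert<\pi$ will be furnished by the contour rotation below, which simultaneously establishes the required regularity.

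The delicate point, and the main obstacle, is the explicit remainder bound uniformly on the sector $\lvert\arg z\rvert<\pi$ with the precise constant $\sec^{2N}(\tfrac12\arg z)$. For real $z>0$ this is the classical case: a sign property of the Stirling series shows that $W_N(z)$ has the sign of, and is dominated in modulus by, the first omitted term, giving the stated bound with $\sec^{2N}(0)=1$. For complex $z$ the representation is valid only for $\Re(z)>0$, so I would rotate the path of integration onto the ray $\arg t=-\tfrac12\arg z$; this is admissible because the poles of $g$ sit at $t=2\pi i k$ (arguments $\pm\tfrac\pi2$), while the exponential factor decays like $e^{-\lvert z\rvert\tau\cos(\frac12\arg z)}$ along the rotated ray, so convergence is retained for every $\lvert\arg z\rvert<\pi$. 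On this ray the Laplace-type integral $\int_0^\infty\tau^{2N-2}e^{-\lvert z\rvert\cos(\frac12\arg z)\tau}\,d\tau$ produces the factor $\lvert z\rvert^{-(2N-1)}\sec^{2N-1}(\tfrac12\arg z)$, and one further power of $\sec(\tfrac12\arg z)$ comes from estimating the rotated tail $R\bigl(\tau e^{-i\arg z/2}\bigr)$ against its value at real argument; the half-angle enters through $1+\cos(\arg z)=2\cos^2(\tfrac12\arg z)$. Carrying this rotation through while keeping the constant sharp is the technical heart of the proof, whereas the extraction of the main terms and the holomorphy of $W_N$ are routine; the final estimate is the classical bound of Stieltjes \cite{stieltjes}.
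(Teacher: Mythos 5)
The paper offers no proof of this lemma at all: it is imported directly from Stieltjes \cite[paragraph 9]{stieltjes}, so there is no internal argument to compare yours against. Your sketch is a correct outline of one classical route to Stieltjes' remainder bound, and the two places where the sharp constant could be lost are handled by mechanisms that genuinely work: the pointwise bound $|R(t)|\le\frac{|B_{2N}|}{(2N)!}t^{2N-2}$ needed before rotating follows from the partial-fraction expansion $g(t)/t=2\sum_{k\ge1}(t^{2}+4\pi^{2}k^{2})^{-1}$ (which is also what makes the ``sign property'' you invoke for real $z$ precise), and the extra power of the secant on the rotated ray comes from $|\tau^{2}e^{-i\theta}+4\pi^{2}k^{2}|\ge(\tau^{2}+4\pi^{2}k^{2})\cos(\theta/2)$, which is your half-angle identity $1+\cos\theta=2\cos^{2}(\theta/2)$ combined with $(\tau^{2}-4\pi^{2}k^{2})^{2}\ge0$; the Laplace integral on the ray $\arg t=-\tfrac12\arg z$ then contributes $\sec^{2N-1}(\tfrac12\arg z)$ and the tail bound the remaining power, giving $\sec^{2N}$ in total. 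Two points are asserted rather than carried out and would need to be written down: the legitimacy of the rotation (the poles of $g$ sit at $\arg t=\pm\tfrac{\pi}{2}$ and are not crossed since $|\tfrac12\arg z|<\tfrac{\pi}{2}$, and one must check decay on the closing arcs --- note the unrotated integral diverges for $\Re z\le 0$, which is exactly why the rotation also supplies the analytic continuation), and the fact that the Bernoulli series for $g$ converges only for $|t|<2\pi$, so $R(t)$ must be defined as the difference $g(t)/t-\sum_{n<N}\frac{B_{2n}}{(2n)!}t^{2n-2}$ rather than as a convergent tail series (your parenthetical ``$R$ grows only polynomially'' suggests you are aware of this). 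For the record, Stieltjes' own argument in the cited source proceeds from a Gudermann/Euler--Maclaurin type representation of the Binet function together with the inequality $|x+z|\ge(x+|z|)\cos(\tfrac12\arg z)$ rather than from Binet's first formula, but the two proofs are equivalent in substance and both produce the stated constant (with $|B_{2N}|$; the paper's statement omits the absolute value, which is harmless here since only $N=1$, $B_{2}=\tfrac16>0$, is ever used).
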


Let
\begin{equation*}
		\begin{aligned}
				 V_j(s) &= \left(-\lambda_js+\lambda_j+\bar{\mu}_j-\frac{1}{2} \right)\log{\Big(1+\frac{\lambda_j+\bar{\mu}_j}{-\lambda_js}\Big)}-(\lambda_j+\bar{\mu}					_j) \\
				& \quad-\left(\lambda_js+\mu_j-\frac{1}{2} \right)\log{\Big(1+\frac{\mu_j}{\lambda_js}\Big)}+\mu_j
				 +W(-\lambda_js)-W(\lambda_js),
		\end{aligned}
\end{equation*}
where $|W(z)| \le \frac{B_{2}}{2|z|}\sec^{2}\left(\frac{\arg{z}}{2}\right)$ is a holomorphic function if $|\arg(z)| < \pi$. Let also $V(s)=\sum\limits_{j=1}^{f} V_j(s)$. Using the previous lemma and simplifying expressions we get

\begin{lemma}
\label{logDelta}
Let $\left|\arg\left(\lambda_j(1-s)+\bar{\mu}_j\right)\right|< \pi$, $|\arg(\lambda_js+\mu_j)|<\pi$ and $t>0$. Then
\begin{equation*}
		\begin{aligned}
				\log|\Delta_{\mathcal{L}}(s)| & =\Big(\frac{1}{2}-\sigma\Big)\Big(d_{\mathcal{L}}\log t+\log(\lambda Q^2)\Big)+d_{\mathcal{L}}\sigma \\
				&\quad+\Re\Bigg(\log{\Big(1-\frac{\sigma i}{t}\Big)}\bigg(d_{\mathcal{L}}\Big(\frac{1}{2}-s\Big)+\frac{\Im(\mu)i}{2}\bigg)+V(s)\Bigg)
		\end{aligned}
\end{equation*}		
\end{lemma}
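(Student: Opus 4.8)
The plan is to start from the functional-equation form \eqref{deltaMaaritelma}, take logarithms, and extract real parts, since $\log|\Delta_{\mathcal{L}}(s)|=\Re\log\Delta_{\mathcal{L}}(s)$. Because $|\omega|=1$ and $Q>0$, the factor $\omega Q^{1-2s}$ contributes only $\Re\big((1-2s)\log Q\big)=(1-2\sigma)\log Q$, whereas each gamma ratio contributes $\Re\big(\log\Gamma(z_{1,j})-\log\Gamma(z_{2,j})\big)$ with $z_{1,j}=\lambda_j(1-s)+\bar{\mu}_j$ and $z_{2,j}=\lambda_js+\mu_j$. To both gamma factors I would apply Lemma \ref{Stirling} with $N=1$, writing $\log\Gamma(z)=(z-\tfrac12)\log z-z+\tfrac12\log 2\pi+W(z)$; the hypotheses $|\arg(\lambda_j(1-s)+\bar{\mu}_j)|<\pi$ and $|\arg(\lambda_js+\mu_j)|<\pi$ are exactly what guarantees that these expansions and all the logarithm manipulations below are legitimate.

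The heart of the proof is an algebraic rearrangement of the resulting main terms. I would factor $z_{1,j}=(-\lambda_js)\big(1+\tfrac{\lambda_j+\bar{\mu}_j}{-\lambda_js}\big)$ and $z_{2,j}=(\lambda_js)\big(1+\tfrac{\mu_j}{\lambda_js}\big)$, so that $\log z_{1,j}$ and $\log z_{2,j}$ each split into $\log(-\lambda_js)$, respectively $\log(\lambda_js)$, plus the correction logarithms of $V_j(s)$; the coefficients $-\lambda_js+\lambda_j+\bar{\mu}_j-\tfrac12$ and $\lambda_js+\mu_j-\tfrac12$ are precisely those multiplying the correction logarithms in the definition of $V_j(s)$. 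The key identity is then $\lambda_js=i\lambda_jt\big(1-\tfrac{\sigma i}{t}\big)$, valid for $t>0$, giving $\log(\lambda_js)=\log\lambda_j+\log t+\tfrac{i\pi}{2}+\log\big(1-\tfrac{\sigma i}{t}\big)$ and $\log(-\lambda_js)=\log\lambda_j+\log t-\tfrac{i\pi}{2}+\log\big(1-\tfrac{\sigma i}{t}\big)$. This is where $\log t$, the constants $\log\lambda_j$, and the common factor $\log\big(1-\tfrac{\sigma i}{t}\big)$ enter.

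Substituting and summing over $j$, I would group the main terms as $\sum_j\big[(z_{1,j}-z_{2,j})L_j-\tfrac{i\pi}{2}(z_{1,j}+z_{2,j}-1)\big]$ with $L_j=\log\lambda_j+\log t+\log\big(1-\tfrac{\sigma i}{t}\big)$. Using $d_{\mathcal{L}}=2\sum_j\lambda_j$ and $\Im(\mu)=-4\sum_j\Im(\mu_j)$ one finds $\sum_j(z_{1,j}-z_{2,j})=d_{\mathcal{L}}(\tfrac12-s)+\tfrac{\Im(\mu)i}{2}$, which is exactly the factor multiplying $\log\big(1-\tfrac{\sigma i}{t}\big)$ in the statement; its $\log t$ part yields $d_{\mathcal{L}}(\tfrac12-\sigma)\log t$ under $\Re$, and its $\log\lambda_j$ part, via $\log\lambda=2\sum_j\lambda_j\log\lambda_j$, yields $(\tfrac12-\sigma)\log\lambda$. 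Together with $(1-2\sigma)\log Q$ these reconstruct $(\tfrac12-\sigma)\big(d_{\mathcal{L}}\log t+\log(\lambda Q^2)\big)$. The constant pieces reorganize as $-z_{1,j}+z_{2,j}=2\lambda_js-(\lambda_j+\bar{\mu}_j)+\mu_j$, whose $2\lambda_js$ part gives $\Re(d_{\mathcal{L}}s)=d_{\mathcal{L}}\sigma$ and whose remaining constants $-(\lambda_j+\bar{\mu}_j)+\mu_j$ are those in $V_j(s)$. Finally, since $z_{1,j}+z_{2,j}-1=\lambda_j+2\Re(\mu_j)-1$ is real, the entire term $-\tfrac{i\pi}{2}(z_{1,j}+z_{2,j}-1)$ is purely imaginary and drops out under $\Re$; everything that is left, namely the correction logarithms, the leftover constants, and the two Stirling remainders, is by construction exactly $\Re V(s)$.

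The anticipated obstacle is bookkeeping rather than conceptual: one must keep the branches of every logarithm consistent so that the factorizations $\log(ab)=\log a+\log b$ and the identities for $\log(\pm\lambda_js)$ hold on the nose (this is the purpose of the $|\arg|<\pi$ hypotheses), and one must check that the lower-order constants together with the two Stirling remainders $W$ package cleanly into $V_j(s)$ with the stated bound. The two small facts that make the imaginary parts collapse and the advertised coefficients surface are the evaluation $\sum_j(z_{1,j}-z_{2,j})=d_{\mathcal{L}}(\tfrac12-s)+\tfrac{\Im(\mu)i}{2}$ and the reality of $z_{1,j}+z_{2,j}-1$.
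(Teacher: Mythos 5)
Your proposal is correct and follows essentially the same route as the paper's proof: apply Lemma \ref{Stirling} with $N=1$ to each gamma factor, use the decompositions $\log(\pm\lambda_js)=\log\lambda_j\pm\frac{\pi i}{2}+\log t+\log\bigl(1-\frac{\sigma i}{t}\bigr)$, and let the correction logarithms, leftover constants and Stirling remainders assemble into $V_j(s)$. Your bookkeeping of which pieces are purely imaginary (the $-\frac{i\pi}{2}(z_{1,j}+z_{2,j}-1)$ term, matching the paper's $-\frac{\pi i}{4}(d_{\mathcal{L}}-\Re(\mu))$, and $2\Im(u)i$) is in fact more explicit than the paper's, which simply states that the claim follows from the displayed computation.
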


\begin{proof}
		It is enough to estimate the product of the $\Gamma$-functions of the formula \eqref{deltaMaaritelma}. First we do some basic calculation and then we use Lemma 				\ref{Stirling}. After short computations we have 
		\begin{equation*}
				\log(-\lambda_js)=\log(\lambda_j)-\frac{\pi i}{2}+\log{t}+\log{\Big(1-\frac{\sigma i}{t}\Big)}
		\end{equation*}
		and
		\begin{equation*}
				\log(\lambda_js)=\log(\lambda_j)+\frac{\pi i}{2}+\log{t}+\log{\Big(1-\frac{\sigma i}{t}\Big)}.
		\end{equation*}
		We apply Lemma \ref{Stirling} for $N=1$ and using the previous formulas, we get 
		\begin{equation*}
				\begin{aligned}
						& \log\left(\prod\limits_{j=1}^{f}\frac{\Gamma(\lambda_j(1-s)+\bar{\mu}_j)}{\Gamma(\lambda_js+\mu_j)}\right) \\
						&\quad= \sum\limits_{j=1}^{f}\Bigg(\bigg(\lambda_j-\lambda_js+\bar{\mu}_j-\frac{1}{2}\bigg)\bigg(\log(\lambda_j)-\frac{\pi i}{2}+\log{t}+							\log{\Big(1-\frac{\sigma i}{t}\Big)}\bigg)+\lambda_js\Bigg)\\
						&\quad\quad-\sum\limits_{j=1}^{f}\left(\Big(\lambda_js+\mu_j-\frac{1}{2}\Big)\bigg(\log(\lambda_j)+\frac{\pi i}{2}+\log{t}+\log{\Big(1-    							\frac{\sigma i}{t}\Big)}\bigg)-\lambda_js\right) \\
						& \quad\quad+V(s) \\
						&\quad=\log{t}\bigg(d_{\mathcal{L}}\Big(\frac{1}{2}-s\Big)+\frac{\Im(\mu)i}{2}\bigg)+2\Im(u)i+d_{\mathcal{L}}s+2v\Big(\frac{1}{2}-s\Big)\\
						& \quad\quad-\frac{\pi i}{4}\big(d_{\mathcal{L}}-\Re(\mu)\big)+\log{\Big(1-\frac{\sigma i}{t}\Big)}\bigg(d_{\mathcal{L}}\Big(\frac{1}{2}-s\Big)							+\frac{\Im(\mu)i}{2}\bigg)+V(s).
				\end{aligned}
		\end{equation*}
		The claim follows from the previous computations and the definition \eqref{deltaMaaritelma} of the function $\Delta_\mathcal{L}(s)$.
\end{proof}

\subsection{The estimate of the function $V_j(s)$}

The formula of the function $\log|\Delta_{\mathcal{L}}(s)|$ in Lemma \ref{logDelta} contains the term $V(s)$. Thus we want also estimate the term $V(s)$. Since $V(s)=\sum\limits_{j=1}^f V_j(s)$, it is sufficient to estimate the terms $V_j(s)$. Before doing this we prove a lemma. The estimate of the term $V_j(s)$ follows from this lemma.

\begin{lemma}
\label{virheW}
If $t \ge \max\limits_j\big\{\frac{2|\lambda_j+\bar{\mu}_j|}{\lambda_j} \big\}$, $t \ge \max\limits_j \big\{ \frac{2|\mu_j|}{\lambda_j}\big\}$ and $\sigma$ is a constant then for all $j$
		\begin{equation*}
				\begin{aligned}
						& \left| W(-\lambda_js)\right|+\left|W(\lambda_js) \right | \\
						& \quad< \frac{|B_2|}{2|\lambda_jt|}\left(2+ \sec^2\left(\frac{\arg\Big(\lambda_j\big(-|\sigma|+ \max													\limits_j\big\{\frac{2|\lambda_j+\bar{\mu}_j|}{\lambda_j}, \frac{2|\mu_j|}{\lambda_j}\big\}i\big)\Big)}{2}\right) \right).
				\end{aligned}
		\end{equation*}
\end{lemma}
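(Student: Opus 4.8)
The plan is to start from the defining estimate for $W$, namely
\[
		|W(z)| \le \frac{|B_2|}{2|z|}\sec^2\left(\frac{\arg z}{2}\right), \qquad |\arg z| < \pi,
\]
and apply it to the two points $z=\lambda_j s$ and $z=-\lambda_j s$. Writing $s=\sigma+it$ with $t>0$ and recalling $\lambda_j>0$, we have $|\lambda_j s|=\lambda_j\sqrt{\sigma^2+t^2}\ge \lambda_j t=|\lambda_j t|$, so both denominators may be enlarged to $|\lambda_j t|$. This gives
\[
		\left|W(-\lambda_j s)\right|+\left|W(\lambda_j s)\right| \le \frac{|B_2|}{2|\lambda_j t|}\left(\sec^2\left(\frac{\arg(-\lambda_j s)}{2}\right)+\sec^2\left(\frac{\arg(\lambda_j s)}{2}\right)\right),
\]
and it remains only to bound the two secant terms.

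Next I would split the two points according to the sign of their real part. Since $\lambda_j s$ and $-\lambda_j s$ have opposite real parts $\pm\lambda_j\sigma$, exactly one of them lies in the closed right half-plane and the other in the closed left half-plane. For the point with nonnegative real part the principal argument lies in $[-\tfrac{\pi}{2},\tfrac{\pi}{2}]$, hence its half-argument lies in $[-\tfrac{\pi}{4},\tfrac{\pi}{4}]$, where $\sec^2$ attains its maximum value $\sec^2(\tfrac{\pi}{4})=2$ at the endpoints; in particular this term is at most $2$, and strictly below $2$ once $\sigma\ne 0$. This accounts for the summand $2$ in the claimed bound.

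The remaining point, call it $w$, has real part $-\lambda_j|\sigma|$ and imaginary part of modulus $\lambda_j t$, so $|\arg w|=\pi-\arctan(t/|\sigma|)\in[\tfrac{\pi}{2},\pi)$. The key monotonicity observation is that, for a fixed negative real part, the modulus of the argument decreases as the modulus of the imaginary part grows; since $\sec^2(\theta/2)$ is increasing for $\theta\in[\tfrac{\pi}{2},\pi)$, replacing the imaginary part $\lambda_j t$ by the smaller quantity $\lambda_j M$, where $M=\max_j\{2|\lambda_j+\bar\mu_j|/\lambda_j,\,2|\mu_j|/\lambda_j\}$, can only increase the secant term. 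Because the hypotheses provide exactly $t\ge M$, this yields
\[
		\sec^2\left(\frac{\arg w}{2}\right) \le \sec^2\left(\frac{\arg\big(\lambda_j(-|\sigma|+Mi)\big)}{2}\right),
\]
and combining this with the bound $2$ from the previous step gives the assertion.

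I expect the main obstacle to be the quadrant and branch bookkeeping for $\arg$: one must verify, in each of the cases $\sigma>0$, $\sigma=0$ and $\sigma<0$, which of $\pm\lambda_j s$ sits in the left half-plane, compute the principal argument there, and confirm that the monotonicity of $\sec^2(\theta/2)$ pushes in precisely the direction made favorable by $t\ge M$. The strictness of the final inequality is then inherited from the strict estimate below $2$ for the right half-plane point when $\sigma\ne 0$ (and otherwise from the strictness built into the remainder estimate for $W$).
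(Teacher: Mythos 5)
Your proposal is correct and follows essentially the same route as the paper: apply the remainder bound for $W$ at $\pm\lambda_j s$, enlarge the denominator via $|\lambda_j s|\ge|\lambda_j t|$, observe that the point in the closed right half-plane contributes $\sec^2\le 2$, and bound the left-half-plane point by the monotonicity of $|\arg|$ in $t$ together with $t\ge\max_j\{2|\lambda_j+\bar{\mu}_j|/\lambda_j,\,2|\mu_j|/\lambda_j\}$. The only difference is cosmetic (the paper reduces to $\sigma<0$ by symmetry rather than splitting by the sign of the real part), and your remark about strictness at $\sigma=0$ flags an edge case the paper also glosses over but which never arises for the values of $\sigma$ actually used.
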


\begin{proof}
		We assume that $\sigma <0$ since we can prove the case $\sigma \ge 0$ similarly. By the definition of the function $W(z)$ we have 
		\begin{equation*}
				|W(z)| \le  \frac{B_{2}}{2|z|}\sec^{2}\left(\frac{\arg{z}}{2}\right)
		\end{equation*} 
		if $|\arg(z)| < \pi$. Also, by the assumptions for the number $t$ we have $|\arg(\pm \lambda_js)|<\pi$. The goal is to estimate the functions $\sec^2\Big(\frac{\arg(-       			\lambda_js)}{2}\Big)$ and $\sec^2\Big(\frac{\arg(\lambda_js)}{2}\Big)$. We do the estimate by using basic properties of the secant function.

		First we estimate the arguments of the complex numbers $-\lambda_js$ and $\lambda_js$. Since $t \ge \max\limits_j\big\{\frac{2|\lambda_j+\bar{\mu}_j|}{\lambda_j} 			\big\}$, $t \ge \max\limits_j \big\{ \frac{2|\mu_j|}{\lambda_j}\big\}$ and $\sigma<0$, the argument of the complex numbers $-\lambda_js$ is in the interval $(-\frac{\pi}			{2},0)$. Thus $\frac{\arg(-\lambda_js)}{2} \in \left(-\frac{\pi}{4},0\right)$. Similarly we have
		\begin{equation*}
				\frac{\arg(\lambda_js)}{2}  \in \left(\frac{\pi}{4},\frac{\arg\Big(\lambda_j\big(\sigma+ \max\limits_j\big\{\frac{2|\lambda_j+\bar{\mu}_j|}{\lambda_j}, 						\frac{2|\mu_j|}{\lambda_j}\big\}i\big)\Big)}{2}\right] \subset \Big(\frac{\pi}{4}, \frac{\pi}{2}\Big).
		\end{equation*}
		Next we use the estimates of the arguments. Because the function $\sec^2(z)$ is an increasing function for $z \in \left[0, \frac{\pi}{2}\right)$ and an even function we 			have
		\begin{equation*}
				\sec^2\Big(\frac{\arg(-\lambda_js)}{2}\Big) < \sec^2\Big(-\frac{\pi}{4}\Big)= 2
		\end{equation*}
		and
		\begin{equation*}
				\sec^2\Big(\frac{\arg(\lambda_js)}{2}\Big) \le \sec^2\left(\frac{\arg\Big(\lambda_j\big(\sigma+ \max\limits_j\big\{\frac{2|\lambda_j+\bar{\mu}_j|}						{\lambda_j}, \frac{2|\mu_j|}{\lambda_j}\big\}i\big)\Big)}{2}\right). 
		\end{equation*}
		The claim follows from the previous equations.
\end{proof}

Now we estimate the term $V_j(s)$.

\begin{lemma}
\label{arvioVj}
If $\sigma$ is a constant and $t$ is as in Lemma \ref{virheW}, then for all $j$
		\begin{equation*}
				\begin{aligned}
						& |V_j(s)| \\
						&  \quad< \left|\frac{(\lambda_j+\bar{\mu}_j)^2}{\lambda_jt} \right|+2\left| \frac{(\lambda_j+\bar{\mu}_j)												(\lambda_j+\bar{\mu}_j-\frac{1}{2})}{\lambda_jt}\right|+ \left|\frac{\mu_j^2}{\lambda_jt} \right|+2\left| \frac{\mu_j(\mu_j-									\frac{1}{2})}{\lambda_jt}\right|\\
						& \quad \quad +\frac{|B_2|}{2|\lambda_js|}\left(2+ \sec^2\left(\frac{\arg\Big(\lambda_j\big(-|\sigma|+ \max\limits_j\big\{\frac{2|\lambda_j+							\bar{\mu}_j|}{\lambda_j}, \frac{2|\mu_j|}{\lambda_j}\big\}i\big)\Big)}{2}\right)\right).
				\end{aligned}
		\end{equation*}
\end{lemma}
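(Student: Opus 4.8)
The plan is to bound $V_j(s)$ by the triangle inequality after splitting it into three natural pieces: the block built from the shift $\lambda_j+\bar\mu_j$, the block built from $\mu_j$, and the remainder $W(-\lambda_js)-W(\lambda_js)$. The last piece is settled immediately by Lemma \ref{virheW}: since $|W(-\lambda_js)-W(\lambda_js)|\le |W(-\lambda_js)|+|W(\lambda_js)|$, and since in the proof of that lemma each $W(\pm\lambda_js)$ is bounded through $|W(z)|\le\frac{B_2}{2|z|}\sec^2\!\big(\tfrac{\arg z}{2}\big)$ with $|z|=|\lambda_js|$, this contributes precisely the last line of the claimed estimate, with the sharp denominator $|\lambda_js|$. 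No further work is needed there, so the whole difficulty lies in the two logarithmic blocks.

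First I would record the two elementary facts I will use. Writing $w_1=\frac{\lambda_j+\bar\mu_j}{-\lambda_js}$ and $w_2=\frac{\mu_j}{\lambda_js}$, the hypotheses $t\ge\frac{2|\lambda_j+\bar\mu_j|}{\lambda_j}$ and $t\ge\frac{2|\mu_j|}{\lambda_j}$ together with $|\lambda_js|\ge\lambda_j t$ force $|w_1|\le\tfrac12$ and $|w_2|\le\tfrac12$, so both arguments lie well inside the unit disc. On $|w|\le\tfrac12$ one has $|\log(1+w)-w|\le |w|^2$ and $|\log(1+w)|\le 2|w|$; both follow by comparing the series $\sum_{k\ge1}(-1)^{k-1}w^k/k$ termwise with a geometric series and using $1/(1-|w|)\le 2$.

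Next I would treat the first block $\big(-\lambda_js+\lambda_j+\bar\mu_j-\tfrac12\big)\log(1+w_1)-(\lambda_j+\bar\mu_j)$ by separating its coefficient into the large part $-\lambda_js$ and the bounded part $\lambda_j+\bar\mu_j-\tfrac12$. Since $-\lambda_js=(\lambda_j+\bar\mu_j)/w_1$, the large part combines \emph{exactly} with the free term $-(\lambda_j+\bar\mu_j)$ to produce $(\lambda_j+\bar\mu_j)\,\frac{\log(1+w_1)-w_1}{w_1}$, whose modulus is at most $|\lambda_j+\bar\mu_j|\,|w_1|=\frac{|\lambda_j+\bar\mu_j|^2}{|\lambda_js|}$ by the first inequality; this yields the first term of the bound. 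The bounded part gives $\big(\lambda_j+\bar\mu_j-\tfrac12\big)\log(1+w_1)$, of modulus at most $2|\lambda_j+\bar\mu_j-\tfrac12|\,|w_1|=\frac{2|\lambda_j+\bar\mu_j|\,|\lambda_j+\bar\mu_j-\frac12|}{|\lambda_js|}$ by the second inequality, giving the second term. The hard part is exactly this algebraic arrangement: the $O(1)$ contribution $\lambda_j+\bar\mu_j$ must cancel on the nose so that the remainder is genuinely $O(1/t)$, and choosing the split and the representation $-\lambda_js=(\lambda_j+\bar\mu_j)/w_1$ is the only delicate step. The second block $-\big(\lambda_js+\mu_j-\tfrac12\big)\log(1+w_2)+\mu_j$ is handled identically, with $\mu_j$ and $w_2$ in the roles of $\lambda_j+\bar\mu_j$ and $w_1$, producing the third and fourth terms. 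Summing the three pieces and replacing $|\lambda_js|$ by the smaller $\lambda_j t$ in the four logarithmic denominators gives the stated inequality, the strictness being inherited from Lemma \ref{virheW}.
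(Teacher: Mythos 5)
Your proposal is correct and follows essentially the same route as the paper: the paper also isolates the cancellation of the $O(1)$ term by expanding $-\lambda_js\log(1+w_1)-(\lambda_j+\bar\mu_j)$ as a series starting at $n=2$ (your $\log(1+w)-w$ remainder in explicit form), bounds the bounded-coefficient pieces via $|\log(1+w)|\le 2|w|$ on $|w|\le\tfrac12$, and appends the $W$-terms from Lemma \ref{virheW}. Your observation that one should keep the sharper denominator $|\lambda_js|$ for the $W$-contribution (rather than the $|\lambda_jt|$ appearing in the statement of Lemma \ref{virheW}) is exactly what is needed to match the stated bound.
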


\begin{proof}
		By the definition of the function $V_j(s)$ and the series expansion we have
		\begin{equation*}
				\begin{aligned}
						& V_j(s) \\
						& \quad=\left(-\lambda_js+\lambda_j+\bar{\mu}_j-\frac{1}{2} \right)\log{\Big(1+\frac{\lambda_j+\bar{\mu}_j}{-\lambda_js}\Big)}-       								(\lambda_j+\bar{\mu}_j) \\
						&\quad\quad -\left(\lambda_js+\mu_j-\frac{1}{2} \right)\log{\Big(1+\frac{\mu_j}{\lambda_js}\Big)}+\mu_j+W(-\lambda_js)-W(\lambda_js) \\
						& \quad = \frac{(\lambda_j+\bar{\mu}_j)^2}{-\lambda_js}\sum\limits_{n=2}^{\infty} \frac{(-1)^{n+1}}{n}\Big(\frac{\lambda_j+\bar{\mu}_j}{-      						\lambda_js}\Big)^{n-2} \\
						& \quad\quad +\left(\lambda_j+\bar{\mu}_j-\frac{1}{2} \right) \sum\limits_{n=1}^{\infty}\frac{(-1)^{n+1}}{n}\Big(\frac{\lambda_j+								\bar{\mu}_j}{-\lambda_js}\Big)^n-\frac{\mu_j^2}{\lambda_js}\sum\limits_{n=2}^{\infty} \frac{(-1)^{n+1}}{n}\Big(\frac{\mu_j}{\lambda_js}							\Big)^{n-2} \\
						&\quad\quad-\left(\mu_j-\frac{1}{2} \right) \sum\limits_{n=1}^{\infty}\frac{(-1)^{n+1}}{n}\Big(\frac{\mu_j}{\lambda_js}\Big)^n+W(-								\lambda_js)-W(\lambda_js).
				\end{aligned}
		\end{equation*}
		Since $t \ge \max\limits_j\big\{\frac{2|\lambda_j+\bar{\mu}_j|}{\lambda_j} \big\}$ and $t \ge \max\limits_j \big\{ \frac{2|\mu_j|}{\lambda_j}\big\},$ we have $\left|				\frac{\lambda_j+\bar{\mu}_j}{-\lambda_js}\right | \le\frac{1}{2}$ and $\left|\frac{\mu_j}{\lambda_js}\right| \le \frac{1}{2}$ for all $j$. The claim follows from these 				estimates and Lemma \ref{virheW}.
\end{proof}

\section{The difference $I_1(T_0,T,b)-I_1(T_0,T,b+1)$}
\label{estimatesI2I1}

In this section we estimate the integral
\begin{equation*}
		I_1(T_0,T,b)-I_1(T_0,T,b+1)=\int_{T_0}^{T} \log|\mathcal{L}(b+it)|-\log|\mathcal{L}(b+1+it)| dt.
\end{equation*} 

\subsection{Preliminaries for the difference $I_1(T_0,T,b)-I_1(T_0,T,b+1)$}
\label{esitietojaI1}

In this section we prove preliminary results which are used to estimate the term $I_1(T_0,T,b)-I_1(T_0,T,b+1)$. The first one of these describes the properties of the logarithm.

\begin{lemma}
\label{log7}
Let $z$ be a complex number. Then
		\begin{enumerate}[label=(\alph*)]
				\item if $|z| <\frac{1}{2}$  we have
				 \begin{equation*}
						|\log(1+z)|<2|z|,
				\end{equation*} 
				\item if $z$ is a real number we have
				\begin{equation*}
						|\log(1-zi)|<7|z|.
				\end{equation*}
		\end{enumerate}
\end{lemma}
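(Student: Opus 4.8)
The plan is to treat the two parts separately, both by elementary means. For part (a) I would use the Taylor series $\log(1+z)=\sum_{n=1}^{\infty}\frac{(-1)^{n+1}}{n}z^{n}$, which converges for $|z|<1$. Applying the triangle inequality termwise and then the crude bound $\frac{1}{n}\le 1$ gives $|\log(1+z)|\le\sum_{n=1}^{\infty}\frac{|z|^{n}}{n}\le\sum_{n=1}^{\infty}|z|^{n}=\frac{|z|}{1-|z|}$, where the last equality is the geometric series, valid since $|z|<\frac12<1$. Since $|z|<\frac12$ forces $1-|z|>\frac12$, the right-hand side is strictly less than $2|z|$ (for $z\neq0$), which is the claim.

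For part (b), taking $z$ real, I would note that $1-zi$ and $1+zi$ are complex conjugates, so $|\log(1-zi)|$ depends only on $|z|$ and we may assume $z\ge0$. The key step is to separate real and imaginary parts: since $1-zi$ has modulus $\sqrt{1+z^{2}}$ and argument $-\arctan z$, one has
\begin{equation*}
\log(1-zi)=\tfrac{1}{2}\log(1+z^{2})-i\arctan z,
\end{equation*}
so by the triangle inequality $|\log(1-zi)|\le\frac{1}{2}\log(1+z^{2})+|\arctan z|$. The imaginary part is controlled immediately by the standard inequality $|\arctan z|\le|z|$.

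The main obstacle is bounding the real part $\frac{1}{2}\log(1+z^{2})$ by a multiple of $|z|$, because the naive estimate $\log(1+z^{2})\le z^{2}$ only beats $|z|$ when $|z|$ is small. I would therefore split into cases. For $|z|\le1$ I would use $\log(1+z^{2})\le z^{2}\le|z|$, giving $|\log(1-zi)|\le\frac{1}{2}|z|+|z|=\frac{3}{2}|z|$. For $|z|>1$ I would instead use $\log(1+z^{2})<\log(2z^{2})=\log2+2\log|z|<3|z|$, since $\log2<1<|z|$ and $\log|z|<|z|$ in that range, giving $|\log(1-zi)|\le\frac{3}{2}|z|+|z|=\frac{5}{2}|z|$. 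In both cases the bound is comfortably below $7|z|$; the constant $7$ is deliberately generous, so no sharp estimate is needed and the crude case split suffices. As in part (a), strictness of the inequality requires $z\neq0$.
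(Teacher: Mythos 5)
Your proof is correct. Part (a) is essentially the paper's argument: both bound the Taylor series of $\log(1+z)$ term by term and reduce to a geometric series dominated by $|z|$; whether one first discards the factor $\frac{1}{n}$ or first bounds $|z|^{n-1}$ by $2^{1-n}$ makes no difference. In part (b) you take a genuinely different route in the details. The paper splits at $|z|=\frac12$, reduces the small case to part (a), and in the case $|z|\ge\frac12$ bounds the squared modulus via $(\log|1-zi|)^2+(\arg(1-zi))^2<5|z|^2+4\pi^2|z|^2$, which is precisely where the constant $7>\sqrt{5+4\pi^2}$ comes from. You instead use the $\ell^1$ decomposition $|\log(1-zi)|\le\frac12\log(1+z^2)+|\arctan z|$, control the argument by the sharper global inequality $|\arctan z|\le|z|$ (rather than the crude $|\arg|\le\frac{\pi}{2}\le 2\pi|z|$), and split at $|z|=1$ only to handle the logarithmic real part. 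This yields the stronger conclusion $|\log(1-zi)|\le\frac52|z|$, showing the paper's constant $7$ is far from optimal; since that constant propagates into $R_1(T_0,T,b)$ and hence into $c_{\mathcal{L},1}$, your version would in fact slightly improve the downstream bounds. Both arguments share the trivial wrinkle that the strict inequalities fail at $z=0$ (where the claim reads $0<0$); you flag this explicitly, while the paper's proof silently has the same issue.
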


\begin{proof}
		\begin{enumerate}[label=(\alph*)]
				\item Assume that $|z|<\frac{1}{2}$. By the series expansion of the logarithm we have 
				\begin{equation*}
						|\log(1+z)|=\left|\sum\limits_{n=1}^{\infty} \frac{(-1)^{n+1}z^n}{n} \right|\le\sum\limits_{n=1}^{\infty} \left|\frac{z^n}{n}\right|.
				\end{equation*}
				Since $|z|<\frac{1}{2}$ and $n \ge 1$ in the sum, we have
				\begin{equation*}
						\sum\limits_{n=1}^{\infty} \left|\frac{z^n}{n}\right| < \sum\limits_{n=1}^{\infty} |z|\frac{1}{2^{n-1}}=2|z|.
				\end{equation*}
				\item Assume that $z$ is a real number. If $|z|<\frac{1}{2}$ then by the previous case we have $|\log(1-zi)|<2|z|$. Let $|z| \ge \frac{1}{2}$. Now we have
				\begin{equation*}
						\left|\log(1-zi)\right|^2=\left(\log{|1-zi|}\right)^2+\left(\arg(1-zi)\right)^2<5|z|^2+4\pi^2|z|^2.
				\end{equation*}
				Thus
				\begin{equation*}
						|\log(1-zi)| < \sqrt{5+4\pi^2}|z|<7|z|.
				\end{equation*}
		\end{enumerate}
\end{proof}

We use the previous lemma and basic properties of the absolute value to obtain the following two inequalities.

\begin{lemma}
\label{I1apulause1}
If $\sigma<-3$ and $t>0$ then 
		\begin{align*}
				& \left|\Re\Bigg(\log\Big(1-\frac{\sigma i}{t}\Big)\bigg(d_{\mathcal{L}}\Big(\frac{1}{2}-\sigma\Big)+\frac{\Im(\mu)i}{2}\bigg) \right. \\
				&\quad \left.-\log{\Big(1-\frac{(\sigma+1) i}{t}\Big)}\bigg(d_{\mathcal{L}}\Big(-\frac{1}{2}-\sigma\Big)+\frac{\Im(\mu)i}{2}\bigg)\Bigg)\vphantom{\Re}\right| 				\\
				&\quad < \frac{2}{t}\left|-d_{\mathcal{L}}\sigma+\frac{\Im(\mu)i}{2} \right|-7\frac{d_{\mathcal{L}}}{2t}(2\sigma+1).
		\end{align*}
\end{lemma}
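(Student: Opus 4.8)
The plan is to split the bracketed expression so that the common factor $-d_{\mathcal{L}}\sigma+\tfrac{\Im(\mu)i}{2}$ (call it $C$) is isolated, since its modulus is exactly what appears in the first term of the bound. Writing $\alpha=\sigma/t$ and $\beta=(\sigma+1)/t$, the coefficients of $\log(1-\alpha i)$ and $\log(1-\beta i)$ are $d_{\mathcal{L}}(\tfrac12-\sigma)+\tfrac{\Im(\mu)i}{2}=C+\tfrac{d_{\mathcal{L}}}{2}$ and $d_{\mathcal{L}}(-\tfrac12-\sigma)+\tfrac{\Im(\mu)i}{2}=C-\tfrac{d_{\mathcal{L}}}{2}$, so that the quantity inside $\Re(\cdot)$ equals
\begin{equation*}
C\bigl(\log(1-\alpha i)-\log(1-\beta i)\bigr)+\tfrac{d_{\mathcal{L}}}{2}\bigl(\log(1-\alpha i)+\log(1-\beta i)\bigr).
\end{equation*}
I expect the first summand to produce the $\tfrac{2}{t}|C|$ term and the second the $-7\tfrac{d_{\mathcal{L}}}{2t}(2\sigma+1)$ term, after which the triangle inequality applied to $\Re(\cdot)$ finishes the proof.

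For the first summand I would combine the difference of logarithms into $\log(1+w)$ with $w=\tfrac{1-\alpha i}{1-\beta i}-1=\tfrac{i/t}{1-\beta i}$, whose modulus works out to $|w|=\bigl(t^2+(\sigma+1)^2\bigr)^{-1/2}$. The hypothesis $\sigma<-3$ forces $(\sigma+1)^2>4$, hence $|w|<\tfrac12$, which is precisely what licenses Lemma \ref{log7}(a); combined with $|w|<1/t$ (valid since $t>0$) this gives $|\log(1+w)|<2|w|<2/t$, so $\bigl|\Re\bigl(C\log(1+w)\bigr)\bigr|\le|C|\,|\log(1+w)|<\tfrac{2}{t}|C|$. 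The verification $|w|<\tfrac12$ is the single place where the strength of the hypothesis $\sigma<-3$ is actually used, and I regard it as the main (albeit modest) obstacle, together with the branch bookkeeping noted below.

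For the second summand I would pass to real parts: $\Re\log(1-\alpha i)=\log|1-\alpha i|\ge0$, and since $\Re z\le|z|$ I estimate $\Re\log(1-\alpha i)\le|\log(1-\alpha i)|<7|\alpha|=-7\sigma/t$ via Lemma \ref{log7}(b) (using $\sigma<0$), and likewise $\Re\log(1-\beta i)<-7(\sigma+1)/t$ using $\sigma+1<0$. Summing and multiplying by $\tfrac{d_{\mathcal{L}}}{2}>0$ yields $-7\tfrac{d_{\mathcal{L}}}{2t}(2\sigma+1)$, which is positive because $2\sigma+1<0$, and this bounds the (nonnegative) second summand in absolute value.

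The only remaining technicality is justifying $\log(1-\alpha i)-\log(1-\beta i)=\log(1+w)$ for principal branches; this holds because $|w|<\tfrac12$ gives $\Re(1+w)>\tfrac12>0$, keeping the argument of the quotient inside the principal range so that the subtraction of logarithms is valid. With this identity in hand, the two estimates above combine through the triangle inequality to give exactly the stated bound.
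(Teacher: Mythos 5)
Your proof is correct and follows essentially the same route as the paper: the same splitting into $C\bigl(\log(1-\alpha i)-\log(1-\beta i)\bigr)+\tfrac{d_{\mathcal{L}}}{2}\bigl(\log(1-\alpha i)+\log(1-\beta i)\bigr)$ with $C=-d_{\mathcal{L}}\sigma+\tfrac{\Im(\mu)i}{2}$, the same rewriting of the difference of logarithms as $\log\bigl(1+\tfrac{i}{t-(\sigma+1)i}\bigr)$ with modulus below $\tfrac12$ thanks to $\sigma<-3$, and the same appeal to Lemma \ref{log7}(a) and (b). Your extra remarks on the branch of the logarithm and on passing to real parts in the second summand are harmless refinements of the paper's triangle-inequality step.
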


\begin{proof}
		We have
		\begin{equation}
		\label{summaLog}
				\begin{aligned}
						& \left|\Re\Bigg(\log\Big(1-\frac{\sigma i}{t}\Big)\bigg(d_{\mathcal{L}}\Big(\frac{1}{2}-\sigma\Big)+\frac{\Im(\mu)i}{2}\bigg) \right. \\
						& \quad\left.-\log{\Big(1-\frac{(\sigma+1) i}{t}\Big)}\bigg(d_{\mathcal{L}}\Big(-\frac{1}{2}-\sigma\Big)+\frac{\Im(\mu)i}{2}\bigg)\Bigg)\right| \\
						&\quad \le \left |\Big(-d_{\mathcal{L}}\sigma+\frac{\Im(\mu)i}{2}\Big)\log\Big(1+\frac{ i}{t-(\sigma+1)i}\Big) \right| \\
						&\quad\quad + \left |\log\Big(1-\frac{\sigma i}{t}\Big)\frac{d_{\mathcal{L}}}{2} \right|+\left |\log\Big(1-\frac{(\sigma+1) i}{t}\Big)									\frac{d_{\mathcal{L}}}{2} \right|.
				\end{aligned}
		\end{equation}
		We want to estimate the previous terms. First we estimate the factor $\log\Big(1+\frac{ i}{t-(\sigma+1)i}\Big)$. Since $\sigma<-3$, we have 
		\begin{equation*}
				\left |\frac{ i}{t-(\sigma+1)i} \right |^2=\frac{1}{t^2+(\sigma+1)^2}<\frac{1}{4}.
		\end{equation*} 
		Thus $|\frac{ i}{t-(\sigma+1)i}|<\frac{1}{2}$ and by Lemma \ref{log7} we get
		\begin{equation}
		\label{logarvio2}
				\begin{aligned}
						& \left|\Big(-d_{\mathcal{L}}\sigma+\frac{\Im(\mu)i}{2}\Big)\log\Big(1+\frac{ i}{t-(\sigma+1)i}\Big)\right| \\
						&\quad< 2 \left|-d_{\mathcal{L}}\sigma+\frac{\Im(\mu)i}{2}\right|\frac{ 1}{|t-(\sigma+1)i|}.
				\end{aligned}
		\end{equation}
		Further, by Lemma \ref{log7} we have
		\begin{equation}
		\label{logarvio71}
				\left |\log\Big(1-\frac{\sigma i}{t}\Big)\frac{d_{\mathcal{L}}}{2} \right | < -7\frac{d_{\mathcal{L}}\sigma}{2t}
		\end{equation}
		and
		\begin{equation}
		\label{logarvio72}
				\left |\log\Big(1-\frac{(\sigma+1) i}{t}\Big)\frac{d_{\mathcal{L}}}{2} \right| < -7\frac{d_{\mathcal{L}}(\sigma+1)}{2t}.
		\end{equation}
		The claim follows from the formulas \eqref{summaLog}, \eqref{logarvio2}, \eqref{logarvio71} and \eqref{logarvio72}.
\end{proof}

\begin{lemma}
\label{I1apulause2}
If $|\sigma| \ge 1$ and $t>0$ then
		\begin{align*}
				& \left|\Re\left(-d_{\mathcal{L}}-d_{\mathcal{L}}\log\Big(1-\frac{\sigma i}{t}\Big)it+d_{\mathcal{L}}\log\Big(1-\frac{(\sigma+1) i}{t}\Big)it\right) \right| \\
				& \quad< d_{\mathcal{L}}\left(\frac{3(\sigma^2+\sigma)}{t^2}+\frac{2}{t}\right).
		\end{align*}
\end{lemma}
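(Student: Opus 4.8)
The plan is to avoid the triangle-inequality/\,Lemma \ref{log7} route used in the preceding lemmas (it destroys the cancellation we need here) and instead compute the real part in closed form. First I would factor out the positive constant $d_{\mathcal{L}}$ (recall $d_{\mathcal{L}}\ge 1$), so that the quantity to bound is $d_{\mathcal{L}}\,|\Re(Z)|$ with $Z=-1-it\log\bigl(1-\tfrac{\sigma i}{t}\bigr)+it\log\bigl(1-\tfrac{(\sigma+1)i}{t}\bigr)$. Since $1-\tfrac{\sigma i}{t}$ has real part $1>0$, its principal logarithm is $\log\bigl(1-\tfrac{\sigma i}{t}\bigr)=\tfrac12\log\bigl(1+\tfrac{\sigma^2}{t^2}\bigr)-i\arctan\tfrac{\sigma}{t}$, and hence $\Re\bigl(it\log(1-\tfrac{\sigma i}{t})\bigr)=t\arctan\tfrac{\sigma}{t}$. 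Therefore $\Re(Z)=-1-t\arctan\tfrac{\sigma}{t}+t\arctan\tfrac{\sigma+1}{t}$, a purely real expression with no more logarithms.

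Next I would collapse this to a single integral. Using $\tfrac{d}{dx}\arctan\tfrac{x}{t}=\tfrac{t}{t^2+x^2}$ gives $t\bigl(\arctan\tfrac{\sigma+1}{t}-\arctan\tfrac{\sigma}{t}\bigr)=\int_{\sigma}^{\sigma+1}\tfrac{t^2}{t^2+x^2}\,dx$, and subtracting the identity $1=\int_{\sigma}^{\sigma+1}dx$ yields the clean closed form $\Re(Z)=-\int_{\sigma}^{\sigma+1}\tfrac{x^2}{t^2+x^2}\,dx$. In particular $\Re(Z)\le 0$, so $|\Re(Z)|=\int_{\sigma}^{\sigma+1}\tfrac{x^2}{t^2+x^2}\,dx$, and the whole quantity in the lemma equals $d_{\mathcal{L}}\int_{\sigma}^{\sigma+1}\tfrac{x^2}{t^2+x^2}\,dx$ in absolute value.

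The final step is to estimate this integral by splitting on the size of $t$, which is exactly what produces the two terms of the claimed bound. For $0<t\le 2$ I would use $\tfrac{x^2}{t^2+x^2}<1$, so that $|\Re(Z)|<\int_{\sigma}^{\sigma+1}dx=1$; since $|\sigma|\ge 1$ forces $\sigma^2+\sigma\ge 0$, the right-hand side is at least $\tfrac{2}{t}\ge 1$, and the bound follows. For $t>2$ I would instead use $\tfrac{x^2}{t^2+x^2}<\tfrac{x^2}{t^2}$ and integrate to get $|\Re(Z)|<\tfrac{(\sigma+1)^3-\sigma^3}{3t^2}=\tfrac{3\sigma^2+3\sigma+1}{3t^2}$; comparing this with $\tfrac{3(\sigma^2+\sigma)}{t^2}+\tfrac{2}{t}$ reduces, after clearing $3t^2$, to $0\le 6(\sigma^2+\sigma)+6t-1$, which holds because $\sigma^2+\sigma\ge 0$ and $t>2>\tfrac16$. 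Combining the two ranges gives the lemma.

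The \textbf{main obstacle} is that the estimate genuinely depends on the cancellation between the constant $-1$ and the difference of the two logarithmic terms: each of $it\log(1-\tfrac{\sigma i}{t})$ and $it\log(1-\tfrac{(\sigma+1)i}{t})$ has modulus of order $t$, while $\Re(Z)$ is only of order $t^{-2}$ for large $t$, so bounding the three summands of $Z$ individually is hopeless. The key realization is that taking the real part exactly turns everything into $-\int_{\sigma}^{\sigma+1}\tfrac{x^2}{t^2+x^2}\,dx$, after which the analysis is elementary. The only remaining subtlety is that the natural bound $\tfrac{x^2}{t^2}$ blows up as $t\to 0$; this is precisely why the claim carries the extra $\tfrac{2}{t}$ term and why the case split at $t=2$ (indeed anywhere in $[\tfrac16,2]$) is necessary.
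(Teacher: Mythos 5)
Your proof is correct, and it reaches the same crucial intermediate quantity as the paper: after taking real parts, both arguments reduce the left-hand side to $d_{\mathcal{L}}\bigl|{-1}+t\arctan\tfrac{\sigma+1}{t}-t\arctan\tfrac{\sigma}{t}\bigr|$ (the paper writes this as $d_{\mathcal{L}}\bigl|1+t\arctan\bigl(\tfrac{-t}{t^2+\sigma^2+\sigma}\bigr)\bigr|$ via the arctangent subtraction formula). From there the two proofs genuinely diverge. The paper gets the sign from $\arctan x\ge x$ for $x\le 0$ and the magnitude from the cited inequality of Shafer, $\arctan x\le \tfrac{3x}{1+2\sqrt{1+x^2}}$ for $x\le 0$, followed by bounding the numerator and denominator of the resulting fraction; this yields the stated bound uniformly for all $t>0$ in a single computation. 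You instead write the difference of arctangents as $\int_{\sigma}^{\sigma+1}\tfrac{t^2}{t^2+x^2}\,dx$ and obtain the exact identity $\Re(Z)=-\int_{\sigma}^{\sigma+1}\tfrac{x^2}{t^2+x^2}\,dx$, which delivers the sign and the size simultaneously. The price is the case split at $t=2$ (genuinely needed, as you observe, since the crude bound $x^2/t^2$ fails for small $t$ when $\sigma=-1$); the gain is a fully self-contained, elementary argument with no appeal to an external arctangent inequality, and a closed form that makes transparent why the expression is of order $t^{-2}$ despite the individual summands being of order $t$ --- the cancellation you correctly identify as the main obstacle. Both proofs use the hypothesis $|\sigma|\ge 1$ only through $\sigma^2+\sigma\ge 0$, and your verification of both ranges of $t$ checks out.
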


\begin{proof}
		We can calculate
		\begin{equation*}
		\label{toisessaMuodossa}
				\begin{aligned}
						& \Re\left(-d_{\mathcal{L}}-d_{\mathcal{L}}\log\Big(1-\frac{\sigma i}{t}\Big)it+d_{\mathcal{L}}\log\Big(1-\frac{(\sigma+1) i}{t}\Big)it\right) \\
						& \quad= -d_{\mathcal{L}}\left(1+t\arg\Big(1-\frac{i}{t-\sigma i}\Big)\right).
				\end{aligned}
		\end{equation*}
		To obtain the absolute value of the term $-d_{\mathcal{L}}\Big(1+t\arg(1-\frac{i}{t-\sigma i})\Big)$ we calculate the upper and lower bounds of this expression. We have 			$\arg(1-\frac{i}{t-\sigma i})=\arctan(\frac{-t}{t^2+\sigma^2+\sigma})$. Thus $\arg(1-\frac{i}{t-\sigma i})\le 0$ and
		\begin{equation*}
		\label{ylaraja}
				-d_{\mathcal{L}}\left(1+t\arg\Big(1-\frac{i}{t-\sigma i}\Big) \right)<-d_{\mathcal{L}}\left( 1+\frac{-t^2}{t^2+\sigma^2+\sigma}\right) \le -d_{\mathcal{L}}					\left(1-1 \right)=0.
		\end{equation*}
		Next we compute the lower bound of the term  $-d_{\mathcal{L}}\Big(1+t\arg(1-\frac{i}{t-\sigma i})\Big)$.

		By \cite{shafer} we have $\arctan(x)\le\frac{3x}{1+2\sqrt{1+x^2}}$ for $x\le 0$. Thus
		\begin{equation}
		\label{arctanArvio}
				\begin{aligned}
						& 1+t\arctan\Big(\frac{-t}{t^2+\sigma^2+\sigma}\Big) \\
						& \quad\le 1-\frac{3t^2}{\left(t^2+\sigma^2+\sigma\right)\left(1+2\sqrt{1+\left(\frac{-t}{t^2+\sigma^2+\sigma}\right)^2}\right)}.
				\end{aligned}
		\end{equation}
		The right hand side is
		\begin{equation}
		\label{murtolukuMuodossa}
				\begin{aligned}
						& = \frac{t^2+\sigma^2+\sigma+2\sqrt{(t^2+\sigma^2+\sigma)^2+t^2}-3t^2}{t^2+\sigma^2+\sigma+2\sqrt{(t^2+\sigma^2+									\sigma)^2+t^2}}.
				\end{aligned}
		\end{equation}
		Since 
		\begin{displaymath}
				t^2+\sigma^2+\sigma+2\sqrt{(t^2+\sigma^2+\sigma)^2+t^2}-3t^2 < 3(\sigma^2+\sigma)+2t
		\end{displaymath}
		and
		\begin{displaymath}
				t^2+\sigma^2+\sigma+2\sqrt{(t^2+\sigma^2+\sigma)^2+t^2} > t^2,
		\end{displaymath}
		by \eqref{arctanArvio} and \eqref{murtolukuMuodossa} we have
		\begin{equation*}
				1+t\arctan\Big(\frac{-t}{t^2+\sigma^2+\sigma}\Big) < \frac{3(\sigma^2+\sigma)}{t^2}+\frac{2}{t},
		\end{equation*}
		as required.
		
\end{proof}

Next we estimate two integrals. The estimates are used in the next section.
\begin{lause}
\label{I2}
Let $T_0$ and $T>T_0$ be positive real numbers. Then 
\begin{equation*}
		\left|\int_{T_0}^{T} \log{|\mathcal{L}(1-b+it)|}\right |<\frac{\pi^2}{3\log 2} \text{ and } \left|\int_{T_0}^{T} \log{|\mathcal{L}(-b+it)|}\right |<\frac{\pi^2}{3\log 2}.
\end{equation*}

\end{lause}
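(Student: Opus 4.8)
The plan is to exploit that both abscissae, $\sigma=-b$ and $\sigma=1-b$, lie far to the right of the critical strip (recall $-b>3$), so that $\mathcal{L}$ is a tiny perturbation of its constant term $a(1)=1$; its logarithm then has a convergent expansion that can be integrated term by term over $[T_0,T]$.

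First I would record the smallness estimate. Writing $\mathcal{L}(\sigma+it)=1+g(\sigma+it)$ with $g(s)=\sum_{n\ge 2}a(n)n^{-s}$, the Ramanujan bound $|a(n)|\le a_1 n$ gives
\[
  \sum_{n\ge 2}\frac{|a(n)|}{n^\sigma}\le \sum_{n\ge 2}\frac{a_1}{n^{\sigma-1}}=:A .
\]
For $\sigma=-b$ this is exactly $\sum_{n\ge 2}a_1 n^{\,b+1}<1$, the defining property of $b$; for $\sigma=1-b$ the analogous sum $\sum_{n\ge 2}a_1 n^{\,b}$ is termwise smaller, hence also $<1$. Thus $A<1$ on both lines (and $\sigma>2$, so the Dirichlet series converges absolutely), $\log\mathcal{L}=\log(1+g)$ is the principal branch with $|g|\le A<1$, and since $\log|\mathcal{L}|=\Re\log(1+g)$ it suffices to bound $\bigl|\int_{T_0}^T\log(1+g(\sigma+it))\,dt\bigr|$.

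Next I would expand $\log(1+g)=\sum_{k\ge 1}\frac{(-1)^{k+1}}{k}g^k$, which converges uniformly in $t$ because $|g|\le A<1$, and integrate term by term. The structural point is that $g^k$ is again a Dirichlet series $\sum_m a_k(m)m^{-s}$ supported on $m\ge 2^k$, with $\sum_m|a_k(m)|m^{-\sigma}\le A^k$. Since $\bigl|\int_{T_0}^T m^{-it}\,dt\bigr|\le \frac{2}{\log m}$ and $m\ge 2^k$ forces $\log m\ge k\log 2$, term-by-term integration gives
\[
  \Bigl|\int_{T_0}^T g(\sigma+it)^k\,dt\Bigr|\le \frac{2}{k\log 2}\,A^k .
\]
This inequality is exactly where the $\log 2$ in the target constant is produced.

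Summing over $k$ then yields
\[
  \Bigl|\int_{T_0}^T\log\mathcal{L}(\sigma+it)\,dt\Bigr|\le \sum_{k\ge 1}\frac1k\cdot\frac{2}{k\log 2}\,A^k=\frac{2}{\log 2}\sum_{k\ge 1}\frac{A^k}{k^2}<\frac{2}{\log 2}\cdot\frac{\pi^2}{6}=\frac{\pi^2}{3\log 2},
\]
using $A<1$ and $\sum_{k\ge 1}k^{-2}=\pi^2/6$; taking real parts gives the same bound for $\int_{T_0}^T\log|\mathcal{L}(\sigma+it)|\,dt$, and specialising to $\sigma=-b$ and $\sigma=1-b$ proves both inequalities. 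The only genuine care needed is in justifying the two interchanges of summation and integration; both are immediate, however, from the uniform bound $|g|\le A<1$ (for the outer sum over $k$) and from the absolute convergence $\sum_m|a_k(m)|m^{-\sigma}\le A^k<\infty$ on the finite interval $[T_0,T]$ (for the inner Dirichlet sum). I expect these routine justifications, rather than any real difficulty, to be the main thing to get right.
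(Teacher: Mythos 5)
Your proposal is correct and follows essentially the same route as the paper: both expand $\log\mathcal{L}=\log(1+g)$ as a power series valid because $\sum_{n\ge2}a_1n^{b+1}<1$, observe that $g^k$ (written in the paper as an $l$-fold Dirichlet sum) is supported on integers $m\ge 2^k$ so that $\bigl|\int_{T_0}^{T}m^{-it}\,dt\bigr|\le 2/(k\log 2)$, and sum $\frac{2}{\log 2}\sum_{k\ge1}k^{-2}=\frac{\pi^2}{3\log 2}$. The only cosmetic difference is that you treat both abscissae uniformly via the parameter $A$, whereas the paper proves the $\sigma=-b$ case and notes the other is similar.
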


\begin{proof}
		We prove the claim only for the integral $|\int_{T_0}^{T} \log{|\mathcal{L}(-b+it)|}$ since the other case can be proved similarly. First we look at the sum $\mathcal{L}			(-b+it)-1= \sum\limits_{n=2}^{\infty}\frac{a(n)}{n^{-b+it}}$. By the Ramanujan hypothesis and the assumptions for the numbers $a_1$ and $b$
		\begin{displaymath}
				\left | \sum\limits_{n=2}^{\infty}\frac{a(n)}{n^{-b+it}} \right | \le \sum\limits_{n=2}^{\infty}\frac{a_1}{n^{-b-1}}<1.
		\end{displaymath}
		Thus we can use the Taylor series expansion of the logarithm of $\mathcal{L}(-b+it)=1+\sum\limits_{n=2}^{\infty}\frac{a(n)}{n^{-b+it}}$ and get
		\begin{displaymath}
				\log |\mathcal{L}(-b+it)|
				= \Re \left(\sum\limits_{l=1}^{\infty} \frac{(-1)^l}{l} \sum\limits_{n_1=2}^{\infty}\ldots \sum\limits_{n_l=2}^{\infty} \frac{a(n_1)\cdots a(n_l)}                                        				{(n_1\cdots n_l)^{-b+it}}\right).
		\end{displaymath}
		
		We have 
		\begin{align*}
				& \left|\int_{T_0}^{T} \log{|\mathcal{L}(-b+it)|}\right |  \\
				&\quad= \left |\Re \left(\sum\limits_{l=1}^{\infty} \frac{(-1)^l}{l} \sum\limits_{n_1=2}^{\infty}\ldots \sum\limits_{n_l=2}^{\infty} \frac{a(n_1)\cdots a(n_l)}                          				{(n_1\cdots n_l)^{-b}} \int_{T_0}^{T} \frac{dt}{(n_1\cdots n_l)^{it}}\right) \right| \\
		\end{align*}
		
		By the Ramanujan hypothesis
		\begin{align*}
				& \left |\Re \left(\sum\limits_{l=1}^{\infty} \frac{(-1)^l}{l} \sum\limits_{n_1=2}^{\infty}\ldots \sum\limits_{n_l=2}^{\infty} \frac{a(n_1)\cdots a(n_l)}              				{(n_1\cdots n_l)^{-b}} \int_{T_0}^{T} \frac{dt}{(n_1\cdots n_l)^{it}}\right) \right | \\
				&\quad \le \sum\limits_{l=1}^{\infty} \frac{1}{l} \sum\limits_{n_1=2}^{\infty}\ldots \sum\limits_{n_l=2}^{\infty} \frac{a_1^l}{(n_1\cdots                                                                       	  			n_l)^{-b-1}} \left |\int_{T_0}^{T} \frac{dt}{(n_1\cdots n_l)^{it}} \right|.
		\end{align*}
		For $n \ge 2^l$ we have
		\begin{displaymath}
				 \left |\int_{T_0}^{T} \frac{dt}{n^{it}} \right|=\left|\frac{i}{\log n}(e^{-iT\log n}-e^{-iT_0\log n})\right| \le \frac{2}{l\log 2}. 
		\end{displaymath}
		Thus
		\begin{align*}
				& \sum\limits_{l=1}^{\infty} \frac{1}{l} \sum\limits_{n_1=2}^{\infty}\ldots \sum\limits_{n_l=2}^{\infty} \frac{a_1^l}{(n_1\cdots                                                                       	  			n_l)^{-b-1}} \left |\int_{T_0}^{T} \frac{dt}{(n_1\cdots n_l)^{it}}\right | \\
				& \quad \le \sum\limits_{l=1}^{\infty} \frac{2}{l^2 \log 2} \left( \sum\limits_{n=2}^{\infty} \frac{a_1}{n^{-b-1}}\right)^l.
		\end{align*}
		Since we assume that $\sum\limits_{n=2}^{\infty}\frac{a_1}{n^{-b-1}}<1$, we have
		\begin{displaymath}
				\sum\limits_{l=1}^{\infty} \frac{2}{l^2 \log 2} \left( \sum\limits_{n=2}^{\infty} \frac{a_1}{n^{-b-1}}\right)^l <\sum\limits_{l=1}^{\infty} \frac{2}{l^2 						\log 2}=\frac{\pi^2}{3\log 2}.
		\end{displaymath}
\end{proof}

\subsection{The estimate of the difference $I_1(T_0,T,b)-I_1(T_0,T,b+1)$}
\label{sectionI1}

In this section we estimate the term 
\begin{equation*}
		\left|I_1(T_0,T,b)-I_1(T_0,T,b+1)-\int_{T_0}^{T} d_{\mathcal{L}}\log t+\log(\lambda Q^2) dt \right|
\end{equation*} 
which  is used to get the main result. We obtain the estimate by using the results which we have obtained in Sections \ref{preliminaries} and \ref{esitietojaI1}. We want to simplify the notation and thus we define for real numbers $T_0$ and $T$ 
\begin{equation*}
		\begin{aligned}
				& S_j(T_0,T,b)\\
				&\quad = \frac{2}{\lambda_j}\log{\frac{T}{T_0}}\left(\vphantom{\sec^2\left(\frac{\arg\Big(\lambda_j\big(-b+ \max\limits_j\big\{\frac{2|\lambda_j+						\bar{\mu}_j|}{\lambda_j},\frac{2|\mu_j|}{\lambda_j}\big\}i\big)\Big)}{2}\right)}\left|\lambda_j+\bar{\mu}_j \right|^2+2\left|(\lambda_j+\bar{\mu}_j)						\left(\lambda_j+\bar{\mu}_j-\frac{1}{2}\right)\right|+\left|\mu_j\right|^2+2\left|\mu_j\left(\mu_j-\frac{1}{2}\right)\right| 										\right.\\			
				&\quad\quad+\left.\frac{|B_2|}{4}\left(2+ \sec^2\left(\frac{\arg\Big(\lambda_j\big(-b+ \max\limits_j\big\{\frac{2|\lambda_j+\bar{\mu}_j|}							{\lambda_j},\frac{2|\mu_j|}{\lambda_j}\big\}i\big)\Big)}{2}\right) \right)\right.\\
				& \quad\quad\left.+\frac{|B_2|}{4}\left(2+ \sec^2\left(\frac{\arg																		\Big(\lambda_j\big(-b-1+ \max\limits_j\big\{\frac{2|\lambda_j+\bar{\mu}_j|}{\lambda_j}, \frac{2|\mu_j|}{\lambda_j}\big\}i\big)\Big)}{2}\right) \right) \right),	
		\end{aligned}
\end{equation*} 
$S(T_0,T,b)=\sum\limits_{j=1}^{f} S_j(T_0,T,b)$ and
\begin{equation}
\label{R1def}
		\begin{aligned}
				R_1(T_0,T,b)  &  =\log{\frac{T}{T_0}} \left(-7\frac{d_{\mathcal{L}}}{2}(2b+1)+2\left|-d_{\mathcal{L}}b+\frac{\Im(\mu)i}{2} \right| +2d_{\mathcal{L}}					\right)\\
				&\quad+\frac{3d_{\mathcal{L}}(b^2+b)}{T_0}+S(T_0,T,b).
		\end{aligned}
\end{equation}

\begin{lause}
\label{I1}
If $T_0 \ge \max\limits_j \big\{ \frac{2|\lambda_j+\bar{\mu}_j|}{\lambda_j}\big\}$, $T_0 > \max\limits_j \big\{ \frac{2|\mu_j|}{\lambda_j}\big\}$ and $T>T_0$ then
		\begin{align*}
				& \left|I_1(T_0,T,b)-I_1(T_0,T,b+1)-\int_{T_0}^{T} d_{\mathcal{L}}\log t+\log(\lambda Q^2) dt \right| \\
				&\quad < R_1(T_0,T,b)+\frac{2\pi^2}{3\log 2}.
		\end{align*}
\end{lause}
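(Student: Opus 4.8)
The plan is to reduce the whole estimate to the functional equation together with Lemma \ref{logDelta}, arranging matters so that the main term $d_{\mathcal{L}}\log t+\log(\lambda Q^2)$ is produced \emph{explicitly} and then cancels against the subtracted integral, leaving only lower-order pieces already controlled by the preliminary lemmas. Writing $s=b+it$ and using $\mathcal{L}(s)=\Delta_{\mathcal{L}}(s)\overline{\mathcal{L}(1-\bar s)}$ with $|\overline{\mathcal{L}(1-\bar s)}|=|\mathcal{L}(1-\bar s)|$, I would first record
\begin{equation*}
\begin{aligned}
\log|\mathcal{L}(b+it)|-\log|\mathcal{L}(b+1+it)| &= \log|\Delta_{\mathcal{L}}(b+it)|-\log|\Delta_{\mathcal{L}}(b+1+it)| \\
&\quad + \log|\mathcal{L}(1-b+it)|-\log|\mathcal{L}(-b+it)|,
\end{aligned}
\end{equation*}
since $1-\overline{(b+it)}=1-b+it$ and $1-\overline{(b+1+it)}=-b+it$. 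Integrating over $t\in[T_0,T]$, the second line is handled at once by Theorem \ref{I2}: each of its two integrals is bounded in absolute value by $\pi^2/(3\log 2)$, so their difference contributes at most $2\pi^2/(3\log 2)$, which is exactly the trailing term of the claimed bound.

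For the $\Delta_{\mathcal{L}}$-part I would insert Lemma \ref{logDelta} at $\sigma=b$ and at $\sigma=b+1$ and subtract. The coefficient $\tfrac12-\sigma$ changes by exactly $1$ between the two abscissae, so $(\tfrac12-\sigma)(d_{\mathcal{L}}\log t+\log(\lambda Q^2))$ produces precisely $d_{\mathcal{L}}\log t+\log(\lambda Q^2)$; this is the integrand subtracted inside the absolute value, and after integration it cancels completely. The term $d_{\mathcal{L}}\sigma$ leaves behind $-d_{\mathcal{L}}$. What then remains is the difference of the $\Re(\cdots)$ terms of Lemma \ref{logDelta} at the two abscissae, together with this $-d_{\mathcal{L}}$.

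I would split that remaining real part into three groups matching the lemmas exactly. Writing $d_{\mathcal{L}}(\tfrac12-s)=d_{\mathcal{L}}(\tfrac12-\sigma)-d_{\mathcal{L}}it$ separates $\log(1-\sigma i/t)\big(d_{\mathcal{L}}(\tfrac12-s)+\Im(\mu)i/2\big)$ into a $\sigma$-dependent piece and a $(-d_{\mathcal{L}}it)$-piece. The difference of the first pieces over $\sigma=b,b+1$ is exactly the quantity bounded in Lemma \ref{I1apulause1} (note $\tfrac12-(b+1)=-\tfrac12-b$), applicable since $b<-3$. The difference of the $(-d_{\mathcal{L}}it)$-pieces, combined with the leftover $-d_{\mathcal{L}}$, is precisely the expression bounded in Lemma \ref{I1apulause2}, applicable since $|b|\ge 1$. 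Finally $\Re(V(b+it))-\Re(V(b+1+it))$ is bounded by $|V(b+it)|+|V(b+1+it)|\le\sum_j\big(|V_j(b+it)|+|V_j(b+1+it)|\big)$ and controlled termwise by Lemma \ref{arvioVj}, whose hypotheses on $t$ hold throughout $[T_0,T]$ by the assumptions on $T_0$.

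The last step is to integrate these three bounds using only $\int_{T_0}^T \frac{dt}{t}=\log\frac{T}{T_0}$ and $\int_{T_0}^T\frac{dt}{t^2}<\frac1{T_0}$. The bounds from Lemmas \ref{I1apulause1} and \ref{I1apulause2} then reproduce the first line and the $\tfrac{3d_{\mathcal{L}}(b^2+b)}{T_0}$ term of $R_1(T_0,T,b)$ in \eqref{R1def}, while integrating the Lemma \ref{arvioVj} bounds and summing over $j$ and over the two abscissae yields $S(T_0,T,b)$: the abscissa-independent polynomial parts account for the overall factor $2/\lambda_j$, and the two secant terms correspond to the points $b$ and $b+1$, with $|b+it|\ge t$ replacing $|s|$ by $t$ in the $B_2$-contribution. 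Adding the $2\pi^2/(3\log2)$ from Theorem \ref{I2} gives the stated inequality. I expect the only real difficulty to be the bookkeeping: checking that the three groups assemble \emph{exactly} into the left-hand sides of the lemmas and that the integrated remainders match $R_1$ and $S$ term by term, rather than any single hard estimate.
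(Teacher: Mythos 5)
Your proposal is correct and follows essentially the same route as the paper's proof: the same functional-equation decomposition into a $\Delta_{\mathcal{L}}$-difference plus $\log|\mathcal{L}(1-b+it)|-\log|\mathcal{L}(-b+it)|$, the same use of Lemma \ref{logDelta} to extract and cancel the main term, the same grouping handled by Lemmas \ref{I1apulause1}, \ref{I1apulause2} and \ref{arvioVj}, and the same appeal to Theorem \ref{I2} for the trailing $2\pi^2/(3\log 2)$. The integration and bookkeeping you describe match the paper's derivation of $R_1(T_0,T,b)$ and $S(T_0,T,b)$ term by term.
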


\begin{proof}
		By the definition of the integral $I_1(T_0,T,b)$ and since $\mathcal{L}(s)=\Delta_{\mathcal{L}}(s)\overline{\mathcal{L}(1-\bar{s})}$, we have
		\begin{equation}
		\label{I1kokonaisuudessa}
				\begin{aligned}
						& I_1(T_0,T,b)-I_1(T_0,T,b+1) \\
						& \quad= \int_{T_0}^{T} \log|\Delta_{\mathcal{L}}(b+it)|-\log|\Delta_{\mathcal{L}}(b+1+it)| dt \\ 
						& \quad\quad+\int_{T_0}^{T}\log|\mathcal{L}(1-b+it)|-\log|\mathcal{L}(-b+it)|  dt.
				\end{aligned}
		\end{equation}
		We will first estimate the terms and the claim follows when we sum the estimates.
		
		First we look at the part $\log|\Delta_{\mathcal{L}}(b+it)|-\log|\Delta_{\mathcal{L}}(b+1+it)|$ and its integral. Since $T _0> \max\limits_j \big\{ \frac{2|\mu_j|}				{\lambda_j}\big\}$, for $t\ge T_0$ it holds that
		\begin{equation*}
				\begin{cases}
						\left|\arg\left(\lambda_j(1-b-it)+\bar{\mu}_j\right)\right|< \pi, \\
						|\arg(\lambda_j(b+it)+\mu_j)|<\pi, \\
				 	 	\left|\arg\left(\lambda_j(-b-it)+\bar{\mu}_j\right)\right|< \pi \text{ and } \\
						|\arg(\lambda_j(b+1+it)+\mu_j)|<\pi
				\end{cases} .
		\end{equation*}		
 		Thus by Lemma \ref{logDelta} we have
		\begin{equation*}
				\begin{aligned}
						& \log|\Delta_{\mathcal{L}}(b+it)|-\log|\Delta_{\mathcal{L}}(b+1+it)| \\
						& \quad=\left(d_{\mathcal{L}}\log t+\log(\lambda Q^2)\right)-d_{\mathcal{L}}+\Re\big(V(b+it)-V(b+1+it)\big) \\
						&\quad\quad+ \Re\Bigg(\log\Big(1-\frac{b i}{t}\Big)\bigg(d_{\mathcal{L}}\Big(\frac{1}{2}-b-it\Big)+\frac{\Im(\mu)i}{2}\bigg) \\
						& \quad\quad-\log\Big(1-\frac{(b+1) i}{t}\Big)\bigg(d_{\mathcal{L}}\Big(-\frac{1}{2}-b-it\Big)+\frac{\Im(\mu)i}{2}\bigg)\Bigg).
				\end{aligned}
		\end{equation*}
		Now we estimate the last difference between the last term and the term $d_\mathcal{L}$. By Lemma \ref{I1apulause1} and Lemma \ref{I1apulause2} we have
		\begin{equation*}
				\begin{aligned}
						& \Bigg| \Re\Bigg(\log\Big(1-\frac{b i}{t}\Big)\bigg(d_{\mathcal{L}}\Big(\frac{1}{2}-b-it\Big)+\frac{\Im(\mu)i}{2}\bigg) \\
						& \quad-\log\Big(1-\frac{(b+1) i}{t}\Big)\bigg(d_{\mathcal{L}}\Big(-\frac{1}{2}-b-it\Big)+\frac{\Im(\mu)i}{2}\bigg)-d_{\mathcal{L}}\Bigg) \Bigg| 							\\
						& \quad< \frac{2}{t} \left|-d_{\mathcal{L}}b+\frac{\Im(\mu)i}{2} \right|-7\frac{d_{\mathcal{L}}}{2t}(2b+1)+d_{\mathcal{L}}									\left(\frac{3(b^2+b)}{t^2}+\frac{2}{t}\right).
				\end{aligned}
		\end{equation*}
		We can integrate this and get
		\begin{equation}
		\label{deltaOsaLasku}
				\begin{aligned}
						& \int_{T_0}^{T} \frac{2}{t} \left|-d_{\mathcal{L}}b+\frac{\Im(\mu)i}{2} \right|-7\frac{d_{\mathcal{L}}}{2t}(2b+1)+d_{\mathcal{L}}								\left(\frac{3(b^2+b)}{t^2}+\frac{2}{t}\right) dt \\
						& \quad< \log{\frac{T}{T_0}} \left(-7\frac{d_{\mathcal{L}}}{2}(2b+1)+2\left|-d_{\mathcal{L}}b+\frac{\Im(\mu)i}{2} \right|											+2d_{\mathcal{L}}\right)	+\frac{3d_{\mathcal{L}}(b^2+b)}{T_0}.
				\end{aligned}
		\end{equation}
		Further, we can estimate the integral of the term $\Re\big(V(b+it)-V(b+1+it)\big)$. We remember that $V(s)=\sum_{j=1}^f V_j(s)$. By Lemma \ref{arvioVj}
		\begin{equation*}
				\begin{aligned}
						& |V_j(b+it)|+|V_j(b+1+it)|\\
						& \quad< 2\left|\frac{(\lambda_j+\bar{\mu}_j)^2}{\lambda_jt} \right|+4\left| \frac{(\lambda_j+\bar{\mu}_j)												(\lambda_j+\bar{\mu}_j-\frac{1}{2})}{\lambda_jt}\right|+ 2\left|\frac{\mu_j^2}{\lambda_jt} \right|+4\left| \frac{\mu_j(\mu_j-\frac{1}{2})}							{\lambda_jt}\right|\\
						&\quad\quad+\frac{|B_2|}{2|\lambda_jt|}\left(2+ \sec^2\left(\frac{\arg																	\Big(\lambda_j\big(-b+ \max\limits_j\big\{\frac{2|\lambda_j+\bar{\mu}_j|}{\lambda_j}, \frac{2|\mu_j|}{\lambda_j}\big\}i\big)\Big)}{2}								\right)\right) \\
						&\quad\quad+\frac{|B_2|}{2|\lambda_jt|}\left(2+ \sec^2\left(\frac{\arg\Big(\lambda_j\big(-b-1+ \max\limits_j\big\{\frac{2|\lambda_j+								\bar{\mu}_j|}{\lambda_j}, \frac{2|\mu_j|}{\lambda_j}\big\}i\big)\Big)}{2}\right)\right).
				\end{aligned}
		\end{equation*}
		We can sum these terms, integrate and get
		\begin{equation}
		\label{virheterminReaaliosa}
				\left |\int_{T_0}^{T} \Re\Big(V(b+it)-V(b+1+it)\Big) dt \right| < S(T_0, T,b).
		\end{equation}
		By Theorem \ref{I2}
		\begin{equation}
		\label{I1tokaOsa}
				\left |\int_{T_0}^{T}\log|\mathcal{L}(1-b+it)|-\log|\mathcal{L}(-b+it)| dt \right| < \frac{2\pi^2}{3\log 2}.
		\end{equation}
		The claim follows immediately when we sum the estimates \eqref{I1kokonaisuudessa}, \eqref{deltaOsaLasku}, \eqref{virheterminReaaliosa} and \eqref{I1tokaOsa} 				together.
\end{proof}

\section{Integral $I_3(T,a,b)$}
\label{estimatesI3I4}

In this section we estimate the integral $I_3(T,a,b)=\int_{b}^{a} \arg \mathcal{L}(\sigma+iT)d \sigma$.

\subsection{Preliminaries for the integral $I_3(T,a,b)$}

In this section we collect some preliminary results which are needed in the estimates of the integral $I_3(T,a,b)$.  Our first goal is to estimate the function $\mathcal{L}(s)$, and to do this, we apply the Phragm\'en-Lindel\"of principle for a strip. We also need the following lemma.

\begin{lemma}
\label{L2reaali}
		If $t \ge 1$ then 
		\begin{equation*}
				\Re \left(\log\left(1+\frac{2 i}{t}\right)\left(d_{\mathcal{L}}\left(\frac{5}{2}-it\right)+\frac{\Im(\mu)i}{2}\right)\right) < \frac{5\sqrt{5}+4}{2}     						d_{\mathcal{L}}+|\Im(\mu)|.
		\end{equation*}
\end{lemma}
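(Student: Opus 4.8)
The plan is to reduce the whole expression to the real and imaginary parts of the single factor $\log(1+\frac{2i}{t})$ and then estimate term by term with elementary inequalities. Since $1+\frac{2i}{t}$ lies in the first quadrant for $t\ge 1$, I would first record
\[
		\log\Big(1+\frac{2i}{t}\Big)=\frac{1}{2}\log\Big(1+\frac{4}{t^2}\Big)+i\arctan\frac{2}{t},
\]
so that $\Re\log(1+\frac{2i}{t})=\log\big|1+\frac{2i}{t}\big|$ and $\Im\log(1+\frac{2i}{t})=\arctan\frac 2t>0$.

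Next I would expand the product and extract its real part via $\Re((a+bi)(c+di))=ac-bd$. Writing the second factor as $d_{\mathcal{L}}(\frac 52-it)+\frac{\Im(\mu)i}{2}$, its real part is $\frac 52 d_{\mathcal{L}}$ and its imaginary part is $\frac{\Im(\mu)}{2}-d_{\mathcal{L}}t$, which gives the exact identity
\[
		\Re\Big(\log\big(1+\tfrac{2i}{t}\big)\big(d_{\mathcal{L}}(\tfrac 52-it)+\tfrac{\Im(\mu)i}{2}\big)\Big)=\frac{5}{2}d_{\mathcal{L}}\log\Big|1+\frac{2i}{t}\Big|+d_{\mathcal{L}}\,t\arctan\frac 2t-\frac{\Im(\mu)}{2}\arctan\frac 2t.
\]

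It then remains to bound the three summands for $t\ge 1$. For the first I would use $\log x\le x$ together with $\big|1+\frac{2i}{t}\big|=\sqrt{1+4/t^2}\le\sqrt 5$, so that $\frac 52 d_{\mathcal{L}}\log\big|1+\frac{2i}{t}\big|<\frac 52 d_{\mathcal{L}}\sqrt 5=\frac{5\sqrt 5}{2}d_{\mathcal{L}}$. For the remaining two I would invoke $\arctan x\le x$ for $x\ge 0$: the middle term then satisfies $d_{\mathcal{L}}\,t\arctan\frac 2t\le d_{\mathcal{L}}\,t\cdot\frac 2t=2d_{\mathcal{L}}$, and the last satisfies $\big|\frac{\Im(\mu)}{2}\arctan\frac 2t\big|\le\frac{|\Im(\mu)|}{2}\cdot\frac 2t=\frac{|\Im(\mu)|}{t}\le|\Im(\mu)|$. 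Summing the three estimates produces exactly $\frac{5\sqrt 5+4}{2}d_{\mathcal{L}}+|\Im(\mu)|$.

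The only point needing care — the "main obstacle," such as it is — is the middle term $d_{\mathcal{L}}\,t\arctan\frac 2t$: here the factor $t$ grows while $\arctan\frac 2t$ decays, so the two factors cannot be bounded separately and one must exploit the cancellation through $\arctan\frac 2t\le\frac 2t$, which keeps the product uniformly bounded (indeed it tends to $2d_{\mathcal{L}}$ as $t\to\infty$). Everything else is routine monotonicity, and the stated constant is transparent once one notes $\frac{5\sqrt 5+4}{2}=\frac{5\sqrt 5}{2}+2$, the sum of the $\sqrt 5$-contribution from the modulus term and the $2$ from the arctangent term.
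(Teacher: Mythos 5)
Your proof is correct and follows essentially the same route as the paper: both split the real part into the three terms $\tfrac{5}{2}d_{\mathcal{L}}\log\left|1+\tfrac{2i}{t}\right|$, $d_{\mathcal{L}}t\arctan\tfrac{2}{t}$ and $-\tfrac{\Im(\mu)}{2}\arctan\tfrac{2}{t}$, then bound them via $\log x\le x$, $\left|1+\tfrac{2i}{t}\right|\le\sqrt{5}$ and $\arctan x\le x$. No substantive difference.
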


\begin{proof}
		Since $t \ge 1$, we get
		\begin{equation*}
				\begin{aligned}
						&\Re \left(\log\Big(1+\frac{2 i}{t}\Big)\left(d_{\mathcal{L}}\Big(\frac{5}{2}-it\Big)+\frac{\Im(\mu)i}{2}\right)\right) \\
						& \quad= \log{\left|1+\frac{2i}{t}\right|}\frac{5}{2}d_{\mathcal{L}}+\arg\left(1+\frac{2i}{t}\right)d_{\mathcal{L}}t-\arg\left(1+\frac{2i}{t}							\right)	\frac{\Im(\mu)}{2} \\
						&\quad < \left|1+\frac{2i}{t}\right|\frac{5}{2}d_{\mathcal{L}}+\left|\arctan\left(\frac{2}{t}\right ) d_{\mathcal{L}}t \right|+\left|\arctan								\left(\frac{2}	{t}\right)\frac{\Im(\mu)}{2}\right| \\
						& \quad< \frac{5}{2}\sqrt{1^2+2^2}d_{\mathcal{L}}+\frac{2}{t} d_{\mathcal{L}}t +\frac{2}{t}\left|\frac{\Im(\mu)}{2}\right| \\
						&\quad \le \frac{5\sqrt{5}+4}{2}d_{\mathcal{L}}+|\Im(\mu)|.
				\end{aligned}
		\end{equation*}
\end{proof}

We denote $R=a-b$. Note that $R>0$ and $a-2R<0$. Let 
\begin{equation*}
		\begin{split}
				 M_1
				 &= \max \left \{ \vphantom{ \sec^2\left(\frac{\arg\Big(\lambda_j\big(-b+ \max\limits_j\big\{\frac{2|\lambda_j+\bar{\mu}_j|}{\lambda_j}, 								\frac{2|\mu_j|}{\lambda_j}\big\}i\big)\Big)}{2}\right)}3^k\frac{a_1\pi^2}{6},3^k\frac{a_1\pi^2}{6}|\lambda Q^2|^{\frac{5}{2}} \exp\left( \frac{5\sqrt{5}}					{2}d_{\mathcal{L}}+|\Im(\mu)|+ \frac{1}{\left|\lambda_j(T-2R)\right|} \times \right. \right.\\
				&\quad  \left.\left. \times\sum\limits_{j=1}^f\left(\vphantom{ \sec^2\left(\frac{\arg\Big(\lambda_j\big(-2+ \max\limits_j\big\{\frac{2|\lambda_j+\bar{\mu}					_j|}{\lambda_j},\frac{2|\mu_j|}{\lambda_j}\big\}i\big)\Big)}{2}\right)} 
				\left|\lambda_j+\bar{\mu}_j \right|^2+2\left|(\lambda_j+\bar{\mu}_j)\left(\lambda_j+\bar{\mu}_j-\frac{1}{2}\right)\right|+ \left|\mu_j\right|^2+2\left|					\mu_j\left(\mu_j-\frac{1}{2}\right)\right| \right.\right. \right. \\
				&\quad\left. \left. \left.+\frac{|B_2|}{2}\left(2+ \sec^2\left(\frac{\arg\Big(\lambda_j\big(-2+ \max\limits_j\big\{\frac{2|\lambda_j+									\bar{\mu}_j|}{\lambda_j},\frac{2|\mu_j|}{\lambda_j}\big\}i\big)\Big)}{2}\right) \right) \right) \right) \right \}.
		\end{split}
\end{equation*}
Let also $k$ be defined as in the condition \ref{analytic} for Selberg class. By using the Phragm\'en-Lindel\"of principle for a strip we get the following theorem.

\begin{lause}
\label{I3I4lemma}
		Let 
		\begin{equation*}
		T \ge 
				\begin{cases}
						2R+1 \\
						2R+\max\limits_j\big\{\frac{2|\lambda_j+\bar{\mu}_j|}{\lambda_j}\big\} \\
						2R+\frac{1}{2^{\frac{1}{k}}-1}, \text{ if } k>0
				\end{cases},
		\end{equation*}
		$T>2R+\max\limits_j \big\{ \frac{2|\mu_j|}{\lambda_j}\big\}$ and $t \in [T-2R,T+2R]$. Then $|\mathcal{L}(s)|  \le \frac{a_1\pi^2}{6}$, if $\sigma \ge 3$, 
		\begin{equation*}
				\begin{aligned}
						|\mathcal{L}(s)| &\le  t^{d_{\mathcal{L}}(\frac{1}{2}-\sigma)}\frac{a_1\pi^2}{6}|\lambda Q^2|^{\frac{1}{2}-\sigma}\times \\ 
						& \quad\times\exp\left(d_{\mathcal{L}}\sigma+\Re\left(\log\left(1-\frac{\sigma i}{t}\right)\left(d_{\mathcal{L}}\left(\frac{1}{2}-s\right)+								\frac{\Im(\mu)i}{2}\right)+V(s) \right)\right),
				\end{aligned}
		\end{equation*}
		if $\sigma \le -2$ and $|\mathcal{L}(s)| <2^{\frac{5}{2}d_{\mathcal{L}}+1}M_1 t^{\frac{1}{2}d_{\mathcal{L}}(3-\sigma)}$ if $-2\le \sigma \le 3$.
\end{lause}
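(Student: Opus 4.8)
The plan is to prove the three regimes separately: the two outer bounds follow directly from the Dirichlet series and the functional equation, and the middle bound is obtained by interpolating between them with the Phragm\'en-Lindel\"of principle.

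For $\sigma \ge 3$ I would use absolute convergence of the Dirichlet series. Since $|a(n)| \le a_1 n$ and $\sigma - 1 \ge 2$,
\[
|\mathcal{L}(s)| \le \sum_{n=1}^{\infty} \frac{|a(n)|}{n^{\sigma}} \le a_1 \sum_{n=1}^{\infty} \frac{1}{n^{\sigma-1}} \le a_1 \sum_{n=1}^{\infty} \frac{1}{n^{2}} = \frac{a_1\pi^2}{6}.
\]
This proves the first assertion, and crucially the bound is independent of $t$, so it holds on the whole line $\sigma = 3$. For $\sigma \le -2$ I would start from the functional equation $\mathcal{L}(s) = \Delta_{\mathcal{L}}(s)\overline{\mathcal{L}(1-\bar s)}$. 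Because $\Re(1-\bar s) = 1-\sigma \ge 3$, the previous case gives $|\overline{\mathcal{L}(1-\bar s)}| = |\mathcal{L}(1-\bar s)| \le \frac{a_1\pi^2}{6}$, while Lemma \ref{logDelta} supplies a closed form for $\log|\Delta_{\mathcal{L}}(s)|$; exponentiating it and multiplying by the bound for $|\mathcal{L}(1-\bar s)|$ reproduces exactly the second claim. Before doing so I would verify, using the hypotheses $T > 2R + \max_j\{2|\mu_j|/\lambda_j\}$ and $T \ge 2R + \max_j\{2|\lambda_j+\bar\mu_j|/\lambda_j\}$, that the argument conditions required by Lemma \ref{logDelta} (and by Lemma \ref{arvioVj} for the term $V(s)$) are satisfied for every $t \in [T-2R, T+2R]$.

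For $-2 \le \sigma \le 3$ I would interpolate. Since the pole of $\mathcal{L}$ at $s = 1$ lies inside this vertical strip, I would first pass to the entire function $g(s) = (s-1)^k\mathcal{L}(s)$, which is of finite order by the analytic-continuation axiom. Multiplying the two edge bounds above by $|s-1|^k$ gives explicit bounds for $|g|$ on the lines $\sigma = -2$ and $\sigma = 3$, so the Phragm\'en-Lindel\"of principle for a strip of width $5$ interpolates them. Dividing the resulting bound back by $|s-1|^k$ on the range $t \in [T-2R, T+2R]$ recovers $\mathcal{L}(s)$, and the net exponent of $t$ becomes the linear function that equals $0$ at $\sigma = 3$ and $\frac{5}{2}d_{\mathcal{L}}$ at $\sigma = -2$, namely $\frac{1}{2}d_{\mathcal{L}}(3-\sigma)$. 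The constant $M_1$ collects the two edge constants --- with Lemma \ref{L2reaali} bounding the $\Re(\cdots)$ term at $\sigma = -2$ and Lemma \ref{arvioVj} the contribution of $V(s)$ --- the factor $3^k$ comes from $|s-1|^k \le (3t)^k$ on the edges, and the factor $2$ from the remaining hypothesis $T \ge 2R + (2^{1/k}-1)^{-1}$, which is exactly the statement $(1 + 1/(T-2R))^k \le 2$.

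The main obstacle is the explicit bookkeeping of the last step rather than any new idea. Removing the pole through $(s-1)^k$ must be done without letting the auxiliary factor corrupt the constants, which is what forces both the clean power $3^k$ and the otherwise unmotivated lower bound on $T$ involving $2^{1/k}$; checking that the interpolated exponent of $t$ collapses to $\frac{1}{2}d_{\mathcal{L}}(3-\sigma)$ after the division, and that the edge estimates (which are only explicit once the argument conditions hold) feed correctly into the Phragm\'en-Lindel\"of hypotheses, is where the care is needed. The two outer regimes are essentially routine given Lemmas \ref{logDelta}, \ref{arvioVj} and \ref{L2reaali}.
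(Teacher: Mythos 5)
Your proposal is correct and follows essentially the same route as the paper: the Dirichlet series bound on $\sigma\ge 3$, the functional equation together with Lemma \ref{logDelta} on $\sigma\le -2$, and the Phragm\'en--Lindel\"of principle applied to $(s-1)^k\mathcal{L}(s)$ on the strip, with the factors $3^k$ and $2$ arising exactly as you describe (the paper additionally spends a factor $2^{\frac{5}{2}d_{\mathcal{L}}}$ to replace $(1+t)^{\frac{1}{2}d_{\mathcal{L}}(3-\sigma)}$ by $t^{\frac{1}{2}d_{\mathcal{L}}(3-\sigma)}$ at the end, which your constant already anticipates).
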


\begin{proof}
		We prove the claim in the different parts depending on the value of the real number $\sigma$. Assume $\sigma\ge 3$. Then 
		\begin{equation*}
				|\mathcal{L}(s)|=\left|\sum\limits_{n=1}^{\infty} \frac{a(n)}{n^s} \right | \le \sum\limits_{n=1}^{\infty} \frac{a_1}{n^{2}} = \frac{a_1\pi^2}{6}.
		\end{equation*}
		Next we look at the case $\sigma\le -2$. We have $\mathcal{L}(s)=\Delta_{\mathcal{L}}(s)\overline{\mathcal{L}(1-\bar{s})}$ and by the previous case $|					\overline{\mathcal{L}(1-\bar{s})}|\le\frac{a_1\pi^2}{6}$.  Since $t> \max\limits_j \big\{ \frac{2|\mu_j|}{\lambda_j}\big\}$,  it holds that $\left|\arg						\left(\lambda_j(1-s)+\bar{\mu}_j\right)\right|< \pi$, $|\arg(\lambda_js+\mu_j)|<\pi$ and $t>0$. Thus by Lemma \ref{logDelta} for $\sigma \le -2$ we have
		\begin{equation*}
				\begin{aligned}
						|\mathcal{L}(s)| & \le  t^{d_{\mathcal{L}}(\frac{1}{2}-\sigma)}\frac{a_1\pi^2}{6}|\lambda Q^2|^{\frac{1}{2}-\sigma} \times \\
						&\quad \times \exp\left(d_{\mathcal{L}}\sigma+\Re\left(\log\Big(1-\frac{\sigma i}{t}\Big)\left(d_{\mathcal{L}}\Big(\frac{1}{2}-s\Big)+								\frac{\Im(\mu)i}{2}\right)+V(s)\right)\right).
				\end{aligned}
		\end{equation*}
		
		Now we look at the case $-2 \le \sigma \le 3$. First we look at the function $(s-1)^k\mathcal{L}(s)$. Since $(s-1)^k\mathcal{L}(s)$ is an analytic function of the finite order, 		we can apply the Phragm\'en-Lindel\"of principle for a strip \cite[Theorem 5.53]{iwaniec}. By the case $\sigma\ge3$ and since $t>0$, we have
		\begin{equation*}
				|\mathcal{L}(3+it)| \le \frac{a_1\pi^2}{6}=\frac{a_1\pi^2}{6}(1+t)^{0}.
		\end{equation*}
		Further, by the case $\sigma \le -2$ and Lemma \ref{L2reaali} we have
		\begin{equation*}
				\begin{aligned}
						& |\mathcal{L}(-2+it)| \\
						& \quad\le t^{d_{\mathcal{L}}(\frac{1}{2}+2)}\frac{a_1\pi^2}{6}|\lambda Q^2|^{\frac{1}{2}+2}\exp\left(\vphantom{\Re\left(\log\Big(1+							\frac{2 i}{t}\Big)\left(d_{\mathcal{L}}\Big(\frac{1}{2}+2-it\Big)+\frac{\Im(\mu)i}{2}\right)+V(-2+it)													\right)}-2d_{\mathcal{L}} \right.\\								
						&\quad\quad \left. +\Re\left(\log\Big(1+\frac{2 i}{t}\Big)\left(d_{\mathcal{L}}\Big(\frac{1}{2}+2-it\Big)+\frac{\Im(\mu)i}{2}\right)+V(-2+it)							\right)\right) \\
						&\quad < (1+t)^{\frac{5}{2}d_{\mathcal{L}}}\frac{a_1\pi^2}{6}|\lambda Q^2|^{\frac{5}{2}} \exp\left( 												 \frac{5\sqrt{5}}{2}d_{\mathcal{L}}+|\Im(\mu)|+\sup\limits_{t \in [T-2R,T+2R] } |V(-2+it)| \right).
				\end{aligned}
		\end{equation*}
		Also, from Lemma \ref{arvioVj} it follows that
		\begin{equation}
		\label{VVali}
				\begin{aligned}
						& \sup\limits_{t \in [T-2R,T+2R] }|V(-2+it)| \\
						&\quad < \frac{1}{\left|\lambda_j(T-2R)\right|}\sum\limits_{j=1}^f\left(\vphantom{ \sec^2\left(\frac{\arg\Big(\lambda_j\big(-b+ \max\limits_j							\big\{\frac{2|\lambda_j+\bar{\mu}_j|}{\lambda_j}, \frac{2|\mu_j|}{\lambda_j}\big\}i\big)\Big)}{2}\right)}\left|\lambda_j+\bar{\mu}_j \right|^2							 \right. \\
						&\quad\quad \left.+2\left|(\lambda_j+\bar{\mu}_j)\left(\lambda_j+\bar{\mu}_j-\frac{1}{2}\right)\right|+ \left|\mu_j\right|^2+2\left|\mu_j							\left(\mu_j-\frac{1}{2}\right)\right| \right. \\ 
						&\quad\quad\left. +\frac{|B_2|}{2}\left(2+ \sec^2\left(\frac{\arg\Big(\lambda_j\big(-2+ \max\limits_j\big\{\frac{2|\lambda_j+\bar{\mu}_j|}							{\lambda_j}, \frac{2|\mu_j|}{\lambda_j}\big\}i \big)\Big)}{2}\right) \right) \right).
				\end{aligned}
		\end{equation} 
		Let $l(x)=-\frac{5}{x}+\frac{3}{5}$. We estimate the function $(s-1)^k\mathcal{L}(s)$ in two different cases; $k=0$ and $k>0$. 

		First we look at the case $k=0$. This means that $(s-1)^k\mathcal{L}(s)=\mathcal{L}(s)$. Let
		\begin{equation*}
				M_{\sigma_1}=\frac{a_1\pi^2}{6}|\lambda Q^2|^{\frac{5}{2}} \exp\left(\frac{5\sqrt{5}}{2}d_{\mathcal{L}}+|\Im(\mu)|+\sup									\limits_{t \in [T-2R,T+2R] } |V(-2+it)|\right).	
		\end{equation*} 
		and $M_{\sigma_2}=\frac{a_1\pi^2}{6}$. 
		By the Phragm\'en-Lindel\"of principle for a strip for $-2 \le\sigma \le 3$ and the inequality \eqref{VVali} we have
		\begin{equation}
		\label{Lk0}
				|\mathcal{L}(s)| 
				\le M_{\sigma_1}^{l(\sigma)}M_{\sigma_2}^{1-l(\sigma)}(1+t)^{l(\sigma)\frac{5}{2}d_{\mathcal{L}}} 
				< M_1 (1+t)^{\frac{1}{2}d_{\mathcal{L}}(3-\sigma)}.
		\end{equation}

		Now we look at the case $k > 0$. Since $k$ is an integer, we have $k  \ge 1$. By triangle inequality for $\sigma \in [-2,3]$ we have 
		\begin{equation}
		\label{3k}
				|s-1|^k\le \big(|\sigma-1|+t\big)^k \le 3^k\Big(1+\frac{t}{3}\Big)^k<3^k\big(1+t\big)^k.
		\end{equation}
		This inequality is used when we apply  the Phragm\'en-Lindel\"of principle for a strip. Let
		\begin{equation*}
				M_{\sigma_1} 
				=3^k\frac{a_1\pi^2}{6}|\lambda Q^2|^{\frac{5}{2}} \exp\left(\frac{5\sqrt{5}}{2}d_{\mathcal{L}}+|\Im(\mu)|+\sup\limits_{t \in [T-2R,T+2R] } |V(-2+it)| 					\right)
		\end{equation*} 
		and $M_{\sigma_2}=3^k\frac{a_1\pi^2}{6}$. By the Phragm\'en-Lindel\"of principle for a strip in $-2 \le\sigma \le 3$, the inequality \eqref{VVali} and the inequality 				\eqref{3k} we have
		\begin{equation*}
				|(s-1)^k\mathcal{L}(s)|
				\le M_{\sigma_1}^{l(\sigma)}M_{\sigma_2}^{1-l(\sigma)}(1+t)^{l(\sigma)(k+\frac{5}{2}d_{\mathcal{L}})+k(1-l(\sigma))}
				< M_1 (1+t)^{\frac{1}{2}d_{\mathcal{L}}(3-\sigma)+k}.
		\end{equation*}
		Thus $|\mathcal{L}(s)| < M_1 \left(\frac{1+t}{|s-1|}\right)^k (1+t)^{\frac{1}{2}d_{\mathcal{L}}(3-\sigma)}$. Since by assumptions $t \in [T-2R,T+2R]$, we have $t \ge 			\frac{1}{2^{\frac{1}{k}}-1}$. Thus
		\begin{equation}
		\label{Lk1}
				M_1 \left(\frac{1+t}{|s-1|}\right)^k (1+t)^{\frac{1}{2}d_{\mathcal{L}}(3-\sigma)} \le 2M_1(1+t)^{\frac{1}{2}d_{\mathcal{L}}(3-									\sigma)} .
		\end{equation}
		
		By the inequalities \eqref{Lk0} and \eqref{Lk1} we have 
		\begin{equation*}
				|\mathcal{L}(s)|< M_1 (1+t)^{\frac{1}{2}d_{\mathcal{L}}(3-\sigma)}< 2M_1(1+t)^{\frac{1}{2}d_{\mathcal{L}}(3-\sigma)}
		\end{equation*} 
		if $k=0$ and 
		\begin{equation*}
				|\mathcal{L}(s)|< 2M_1(1+t)^{\frac{1}{2}d_{\mathcal{L}}(3-\sigma)}
		\end{equation*}
		if $k>0$. Further, for $\sigma \in [-2,3]$ and $t \ge 1$ we have
		\begin{equation*} 
				 (1+t)^{\frac{1}{2}d_{\mathcal{L}}(3-\sigma)} \le (2t)^{\frac{1}{2}d_{\mathcal{L}}(3-\sigma)} \le 2^{\frac{5}{2}d_{\mathcal{L}}}t^{\frac{1}{2}						d_{\mathcal{L}}(3-\sigma)}.
		\end{equation*}
		Thus
		\begin{equation*}
				|\mathcal{L}(s)| < 2^{\frac{5}{2}d_{\mathcal{L}}+1}M_1 t^{\frac{1}{2}d_{\mathcal{L}}(3-\sigma)}
		\end{equation*}
		for all $k$.
 
\end{proof}

The following property is also useful in estimating the integral $I_3(T,a,b)$.
\begin{lemma}
\label{I3I4lemmalause}
		Let $\sigma \in [a-2R, a+2R]$, $T>2R$ and $t \in [T-2R,T+2R]$. Then
		\begin{align*}
				& \Re\left(\log\Big(1-\frac{\sigma i}{t}\Big)\bigg(d_{\mathcal{L}}\Big(\frac{1}{2}-s\Big)+\frac{\Im(\mu)i}{2}\bigg)\right) \\
				&\quad < \left|1-\frac{(a+2R)i}{T-2R} \right|d_{\mathcal{L}}\Big(\frac{1}{2}-a+2R\Big)+d_{\mathcal{L}}(a+2R)+\frac{a+2R}{T-2R}\left|\frac{\Im(\mu)}					{2}\right|.
		\end{align*}
\end{lemma}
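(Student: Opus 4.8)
The plan is to write $s=\sigma+it$ and expand the real part directly. Setting $L=\log\big(1-\tfrac{\sigma i}{t}\big)$ and $P=d_{\mathcal{L}}\big(\tfrac12-s\big)+\tfrac{\Im(\mu)i}{2}$, I would use $\Re(LP)=\Re(L)\Re(P)-\Im(L)\Im(P)$ together with $\Re(L)=\log\big|1-\tfrac{\sigma i}{t}\big|$ and $\Im(L)=\arg\big(1-\tfrac{\sigma i}{t}\big)=\arctan\big(\tfrac{-\sigma}{t}\big)$ (the principal argument is legitimate because $\Re\big(1-\tfrac{\sigma i}{t}\big)=1>0$, and $t\ge T-2R>0$ since $T>2R$), while $\Re(P)=d_{\mathcal{L}}\big(\tfrac12-\sigma\big)$ and $\Im(P)=-d_{\mathcal{L}}t+\tfrac{\Im(\mu)}{2}$. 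This produces the three summands
\begin{equation*}
d_{\mathcal{L}}\Big(\tfrac12-\sigma\Big)\log\Big|1-\tfrac{\sigma i}{t}\Big|+d_{\mathcal{L}}t\arctan\Big(\tfrac{-\sigma}{t}\Big)-\tfrac{\Im(\mu)}{2}\arctan\Big(\tfrac{-\sigma}{t}\Big),
\end{equation*}
and the claim follows once each is bounded by the corresponding term on the right-hand side.

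For the first summand I would use $\log\big|1-\tfrac{\sigma i}{t}\big|=\tfrac12\log\big(1+\tfrac{\sigma^2}{t^2}\big)\ge 0$. When $\sigma\le\tfrac12$ both factors are nonnegative, so I can bound them separately: $\log\big|1-\tfrac{\sigma i}{t}\big|<\big|1-\tfrac{\sigma i}{t}\big|\le\big|1-\tfrac{(a+2R)i}{T-2R}\big|$ via $\log y<y$ together with $|\sigma|\le a+2R$ and $t\ge T-2R$ (so $\sqrt{1+\sigma^2/t^2}$ is largest at $\sigma=a+2R$, $t=T-2R$), and $\tfrac12-\sigma\le\tfrac12-a+2R$ (largest at $\sigma=a-2R$). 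When $\sigma>\tfrac12$ the summand is nonpositive while the target term is strictly positive, so it holds trivially. The second and third summands I would handle uniformly through $|\arctan x|\le|x|$: the second is $\le d_{\mathcal{L}}t\cdot\tfrac{|\sigma|}{t}=d_{\mathcal{L}}|\sigma|\le d_{\mathcal{L}}(a+2R)$, and the third has absolute value $\le\tfrac{|\sigma|}{t}\big|\tfrac{\Im(\mu)}{2}\big|\le\tfrac{a+2R}{T-2R}\big|\tfrac{\Im(\mu)}{2}\big|$. Summing the three bounds gives the stated estimate, strict overall because the logarithmic step already carries strict slack.

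The only genuinely delicate point is sign-bookkeeping: both $d_{\mathcal{L}}\big(\tfrac12-\sigma\big)$ and $\arctan\big(\tfrac{-\sigma}{t}\big)$ change sign as $\sigma$ ranges over $[a-2R,a+2R]$, so I must ensure the suprema are attained at the claimed endpoints and never multiply two inequalities unless both sides are nonnegative. To that end I would record at the outset that $a-2R=2b-a<0<a+2R$, whence $|\sigma|\le a+2R$ throughout, and that $\tfrac12-a+2R=\tfrac12+a-2b>0$, which makes the first right-hand term strictly positive and lets it absorb the first summand in the case $\sigma>\tfrac12$. No deeper input is required; everything reduces to these elementary monotonicity and sign observations and the two pointwise estimates $\log y<y$ and $|\arctan x|\le|x|$.
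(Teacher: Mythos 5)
Your proposal is correct and follows essentially the same route as the paper: expand $\Re(LP)$ into the term $\log\left|1-\frac{\sigma i}{t}\right|d_{\mathcal{L}}\left(\frac{1}{2}-\sigma\right)+\arg\left(1-\frac{\sigma i}{t}\right)\left(d_{\mathcal{L}}t-\frac{\Im(\mu)}{2}\right)$ and bound each piece using $|\sigma|\le a+2R$ and $t\ge T-2R>0$. Your extra care with the sign of $\frac{1}{2}-\sigma$ and the explicit use of $\log y<y$ and $|\arctan x|\le|x|$ only fills in details the paper leaves implicit.
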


\begin{proof}
		We have
		\begin{equation}
		\label{reaaliOsa}
				\begin{aligned}
						& \Re\left(\log\Big(1-\frac{\sigma i}{t}\Big)\bigg(d_{\mathcal{L}}\Big(\frac{1}{2}-s\Big)+\frac{\Im(\mu)i}{2}\bigg)\right) \\
						& \quad=\log \left|1-\frac{\sigma i}{t}\right|d_{\mathcal{L}}\Big(\frac{1}{2}-\sigma\Big)+\arg\Big(1-\frac{\sigma i}{t}\Big)\left(d_{\mathcal{L}}     						t-\frac{\Im(\mu)}{2}\right).
				\end{aligned}
		\end{equation}
		By the assumptions for the numbers $\sigma$ and $t$ we have $|\sigma| \le a+2R$ and $t \ge T-2R >0$. Thus
		\begin{equation}
		\label{log1sigmaArvio}
				\log \left|1-\frac{\sigma i}{t}\right|d_{\mathcal{L}}\Big(\frac{1}{2}-\sigma\Big) < \left |1-\frac{(a+2R)i}{T-2R} \right|d_{\mathcal{L}}\left(\frac{1}{2}-a+2R					\right)
		\end{equation}
		and 
		\begin{equation}
		\label{arg1sigmaArvio}
				\arg\Big(1-\frac{\sigma i}{t}\Big)\left(d_{\mathcal{L}}t-\frac{\Im(\mu)}{2}\right) < d_{\mathcal{L}}(a+2R)+\frac{a+2R}{T-2R}\left|\frac{\Im(\mu)}{2}						\right|.
		\end{equation}
		The claim follows immediately from the formulas \eqref{reaaliOsa}, \eqref{log1sigmaArvio} and \eqref{arg1sigmaArvio}.
\end{proof}

\subsection{The estimate of the integral $I_3(T,a,b)$}
\label{sectionI3}

In this section we estimate  the integral $I_3(T,a,b)$. Since the estimate contains the term $V(s)$, where $\sigma$ and $t$ lie in specific intervals, we also need estimate the term $V(s)$ on these intervals. We want to shorten our notation and thus we define the following terms: Let $T$ be a real number,
\begin{equation*}
		\begin{aligned}
				& V^*(T) \\
				& \quad = \frac{1}{T-2R}\sum\limits_{j=1}^f\frac{1}{\lambda_j}\left(\vphantom{\sec^2\left(\frac{\arg\Big(\lambda_j\big(-a-2R+\max\limits_j							\big
				\{\frac{2|\lambda_j+\bar{\mu}_j|}{\lambda_j}, \frac{2|\mu_j|}{\lambda_j}\big\}i\big)\Big)}{2}\right)}\left|\lambda_j+\bar{\mu}_j 									\right|^2  \right. \\
				&\quad\quad \left.+2\left|(\lambda_j+\bar{\mu}_j)\left(\lambda_j+\bar{\mu}_j-\frac{1}{2}\right)\right|+ \left|\mu_j \right|^2+2\left| \mu_j\left(\mu_j-						\frac{1}{2}\right)\right| \right. \\
				& \quad\quad \left.+\frac{|B_2|}{2}\left(2+ \sec^2\left(\frac{\arg\Big(\lambda_j\big(-a-2R+\max\limits_j\big\{\frac{2|\lambda_j+\bar{\mu}_j|}{\lambda_j}, 					\frac{2|\mu_j|}{\lambda_j}\big\}i\big)\Big)}{2}\right) \right) \right )
		\end{aligned}
\end{equation*}
and
\begin{equation}
\label{R2def}
		\begin{aligned}
				& R_2(T) \\
				&\quad = \frac{1}{\log 2} \left(\vphantom{ \max\bigg\{0, \frac{5}{2}\log{|\lambda Q^2|}+\frac{5\sqrt{5}}{2}												d_{\mathcal{L}}+|\Im(\mu)| \bigg\} \Bigg\} }d_{\mathcal{L}}\left(\frac{1}{2}-a+2R\right)\log (2T) +\log{\frac{a_1\pi^2}{6}}+V^*(T) \right. \\			
				&\quad\quad \left.+\max \Bigg\{\max\bigg\{\frac{5}{2}\log{|\lambda Q^2|}, \left(\frac{1}{2}-a+2R\right)\log{|\lambda Q^2|} \bigg								\}-2d_{\mathcal{L}} \right.\\
				& \quad\quad\left.+\left|1-\frac{(a+2R)i}{T-2R} \right|d_{\mathcal{L}}\left(\frac{1}{2}-a+2R\right)+d_{\mathcal{L}}(a+2R)+\frac{a+2R}{T-2R}\left|						\frac{\Im(\mu)}{2} \right|, \right.\\
				&\quad\quad \left.  \left(\frac{5}{2}d_{\mathcal{L}}+1\right)\log{2}+k\log{3}+ \max\bigg\{0, \frac{5}{2}\log{|\lambda Q^2|}+\frac{5\sqrt{5}}{2}						d_{\mathcal{L}}+|\Im(\mu)| \bigg\} \Bigg\} \right)
		\end{aligned}
\end{equation}

\begin{lemma}
\label{ylarajapositiivinen}
		Assume that $T$ satisfies the same conditions as in Theorem \ref{I3I4lemma}.
		Let 
		\begin{equation*}
				g(z)=\frac{1}{2}\left(\mathcal{L}(z+iT)+\overline{\mathcal{L}(\bar{z}+iT)}\right).
		\end{equation*}
		Then 
		\begin{equation*}
				\left |\frac{1}{2\pi \log 2}\int_{0}^{2\pi} \log |g(a+2Re^{i \theta})| d\theta \right|< R_2(T)
		\end{equation*} 
		if $\int_{0}^{2\pi} \log |g(a+2Re^{i \theta})| d \theta \ge 0$.
\end{lemma}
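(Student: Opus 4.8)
The plan is to bound $\log|g|$ pointwise on the circle $|z-a|=2R$ by $\log 2\cdot R_2(T)$ using the Phragm\'en--Lindel\"of estimates of Theorem \ref{I3I4lemma}, integrate, and then invoke the hypothesis $\int_0^{2\pi}\log|g(a+2Re^{i\theta})|\,d\theta\ge 0$ to drop the absolute value. First I would parametrise the circle: for $z=a+2Re^{i\theta}$ one has $z+iT=\sigma+it_1$ and $\bar z+iT=\sigma+it_2$ with $\sigma=a+2R\cos\theta\in[a-2R,a+2R]$ and $t_1=T+2R\sin\theta$, $t_2=T-2R\sin\theta$ both in $[T-2R,T+2R]$. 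By the triangle inequality $|g(a+2Re^{i\theta})|\le\tfrac12\big(|\mathcal{L}(z+iT)|+|\mathcal{L}(\bar z+iT)|\big)$, so it suffices to bound $|\mathcal{L}(w)|$ for every $w$ with $\Re w=\sigma\in[a-2R,a+2R]$ and $\Im w\in[T-2R,T+2R]$. The hypotheses on $T$ are exactly those of Theorem \ref{I3I4lemma}, and since $T\ge 2R+1$ we have $1\le T-2R\le\Im w\le T+2R<2T$; in particular $\log\Im w<\log(2T)$.

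Second, I would estimate $\log|\mathcal{L}(w)|$ in the three regimes of Theorem \ref{I3I4lemma} and show each is $\le\log 2\cdot R_2(T)$, matching the three cases to the terms of \eqref{R2def}. For $\sigma\ge 3$ one only has $\log|\mathcal{L}(w)|\le\log\tfrac{a_1\pi^2}{6}$, which is dominated since every remaining summand of $\log 2\cdot R_2(T)$ is nonnegative (note $\tfrac12-a+2R=\tfrac12+a-2b>0$, $V^*(T)>0$, and the outer maximum is positive). For $\sigma\le -2$, taking logarithms in Theorem \ref{I3I4lemma} and bounding term by term produces the first argument of the outer maximum: $d_{\mathcal{L}}(\tfrac12-\sigma)\log\Im w\le d_{\mathcal{L}}(\tfrac12-a+2R)\log(2T)$; then $(\tfrac12-\sigma)\log|\lambda Q^2|\le\max\{\tfrac52\log|\lambda Q^2|,(\tfrac12-a+2R)\log|\lambda Q^2|\}$ (a linear function of $\tfrac12-\sigma\in[\tfrac52,\tfrac12-a+2R]$, extremal at an endpoint); $d_{\mathcal{L}}\sigma\le -2d_{\mathcal{L}}$; the real-part term is handled by Lemma \ref{I3I4lemmalause}; and $\Re V(s)\le V^*(T)$. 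For $-2\le\sigma\le 3$ one uses $|\mathcal{L}(w)|<2^{\frac52 d_{\mathcal{L}}+1}M_1(\Im w)^{\frac12 d_{\mathcal{L}}(3-\sigma)}$; unwinding $\log M_1$ produces the second argument of the outer maximum, the $V$-contribution buried inside $M_1$ is absorbed by the separate $V^*(T)$ term (since $-2\in[a-2R,a+2R]$ and $\max\{0,A+B\}\le\max\{0,A\}+B$ for $B\ge 0$), and the power of $\Im w$ is controlled by $\tfrac12 d_{\mathcal{L}}(3-\sigma)\log\Im w\le\tfrac52 d_{\mathcal{L}}\log(2T)\le d_{\mathcal{L}}(\tfrac12-a+2R)\log(2T)$, the last step using $\tfrac52\le\tfrac12-a+2R$, which holds because $a>2$ and $b<-3$.

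Finally, since $\log|g(a+2Re^{i\theta})|\le\log 2\cdot R_2(T)$ for every $\theta$ (strictly on an arc of positive measure, e.g.\ where $\sigma>-2$), integrating over $[0,2\pi]$ yields $\int_0^{2\pi}\log|g(a+2Re^{i\theta})|\,d\theta<2\pi\log 2\cdot R_2(T)$, hence $\frac{1}{2\pi\log 2}\int_0^{2\pi}\log|g(a+2Re^{i\theta})|\,d\theta<R_2(T)$. Under the hypothesis that this integral is nonnegative, the quantity inside the absolute value is itself nonnegative and bounded above by $R_2(T)$, which gives the stated inequality.

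I expect the main obstacle to be purely the bookkeeping in the second step: identifying which regime contributes which argument of the nested maxima in \eqref{R2def}, and verifying the two structural inequalities $\tfrac52\le\tfrac12-a+2R$ (so that the largest power of $\Im w$ over the whole region is the one appearing in the $\log(2T)$ coefficient) and $V^*(T)\ge\sup|V|$ over the region (so that the single $V^*(T)$ term covers both the explicit $\Re V$ in the left regime and the $V$-term hidden inside $M_1$ in the middle regime). No genuinely hard analysis is involved once the Phragm\'en--Lindel\"of bounds of Theorem \ref{I3I4lemma} are in hand.
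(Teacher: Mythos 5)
Your proposal is correct and follows essentially the same route as the paper: parametrise the circle so that the real and imaginary parts land in $[a-2R,a+2R]$ and $[T-2R,T+2R]$, apply Theorem \ref{I3I4lemma} together with Lemma \ref{I3I4lemmalause} regime by regime to get a uniform pointwise bound $\log|g(a+2Re^{i\theta})|\le \log 2\cdot R_2(T)$ (the paper packages this as the function $M(T)$), and then use the nonnegativity hypothesis to remove the absolute value after integrating. The only cosmetic difference is that the paper routes the last step through an indicator function $\chi_{\log|g|\ge 0}$ rather than dropping the absolute value directly, which changes nothing of substance.
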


\begin{proof}
		We assume that  $\int_{0}^{2\pi} \log |g(a+2Re^{i \theta})| d \theta \ge 0$. First we estimate the term $|g(a+2Re^{i \theta})|$. To do this we 							estimate the functions $\mathcal{L}(z+iT)$ and $\overline{\mathcal{L}(\bar{z}+iT)}$ by Theorem \ref{I3I4lemma} and Lemma \ref{I3I4lemmalause}. First we define the 				function $M(T)$. Let 
		\begin{equation*}
				\begin{aligned}
						M(T) &=\frac{a_1\pi^2}{6}\max \Bigg\{|\lambda Q^2|^\frac{5}{2}\exp\Bigg(-2d_{\mathcal{L}}+\left|1-\frac{(a+2R)i}{T-2R} \right|								d_{\mathcal{L}}\left(\frac{1}{2}-a+2R\right) \\ 
						&\quad+d_{\mathcal{L}}(a+2R)+\frac{a+2R}{T-2R}\left|\frac{\Im(\mu)}{2} \right|\Bigg), |\lambda Q^2|^{\frac{1}{2}-a+2R}\exp									\Bigg(-2d_{\mathcal{L}} \\
						&\quad+\left|1-\frac{(a+2R)i}{T-2R} \right|d_{\mathcal{L}}\left(\frac{1}{2}-a+2R\right)+d_{\mathcal{L}}(a+2R)+\frac{a+2R}{T-2R}\left|							\frac{\Im(\mu)}{2} \right|\Bigg), \\
						&\quad 2^{\frac{5}{2}d_{\mathcal{L}}+1}3^k, 2^{\frac{5}{2}d_{\mathcal{L}}+1}3^k|\lambda Q^2|^\frac{5}{2}\exp\bigg(\frac{5\sqrt{5}}							{2}d_{\mathcal{L}}+|\Im(\mu)|\bigg) \Bigg\}.
				\end{aligned}
		\end{equation*}
		Since the estimates of the functions $\mathcal{L}(z+iT)$ and $\overline{\mathcal{L}(\bar{z}+iT)}$ contain restrictions of the imaginary and real parts, we estimate 				them. We have
		\begin{equation*}
				\Re(a+2Re^{\pm i \theta}+iT)=a+2R\cos(\theta) \in [a-2R,a+2R].
		\end{equation*}
		Further, we have
		\begin{displaymath}
				 |\Im(a+2Re^{\pm i \theta}+iT)|=|2R\sin(\pm\theta)+T| \in[T-2R,T+2R].
		\end{displaymath}
		Note that $[-2,3] \subset [a-2R,a+2R]$. Further, note that by Lemma \ref{arvioVj} 
		\begin{equation*}
				\sup_{\sigma \in [a-2R, a+2R],t \in [T-2R,T+2R] } |V(s)| < V^*(T). 
		\end{equation*}
		Thus by Theorem \ref{I3I4lemma} and Lemma \ref{I3I4lemmalause} 
		\begin{displaymath}
				|\mathcal{L}(a+2Re^{\pm i \theta}+iT)| < (2R+T)^{d_{\mathcal{L}}(\frac{1}{2}-a+2R)}M(T)\exp(V^*(T)).
		\end{displaymath}
		The same estimate holds also for $\overline{\mathcal{L}(\bar{z}+iT)}$. Thus 
		\begin{equation}
		\label{arviog}
				\left|g(a+2Re^{i \theta})\right| < (2R+T)^{d_{\mathcal{L}}(\frac{1}{2}-a+2R)}M(T)\exp(V^*(T)).
		\end{equation}
		
		We have estimated the term $|g(a+2Re^{i \theta})|$. Next we estimate the term $\left |\frac{1}{2\pi \log 2}\int_{0}^{2\pi} \log |g(a+2Re^{i \theta})| 			                     d\theta \right|$. Let
		\begin{equation*}
				\chi_{\log{|g| \ge 0}}=
				\begin{cases}
						1, \textrm{ if } \log{|g| \ge 0} \\
						0, \textrm{ otherwise }
				\end{cases} .
		\end{equation*}			
		Now, since  $\int_{0}^{2\pi} \log |g(a+2Re^{i \theta})| d \theta \ge 0$, we have
		\begin{displaymath}
				\left|\int_{0}^{2\pi} \log |g(a+2Re^{i \theta})| d \theta \right | \le \int_{0}^{2\pi} \chi_{\log{|g| \ge 0}} \log |g(a+2Re^{i \theta})| d \theta.
		\end{displaymath}
		Since for $\log|g(a+2Re^{i \theta})| \ge 0$ it holds that $|g(a+2Re^{i \theta})| \ge 1$, it is enough to know the upper bound of the term $|g(a+2Re^{i \theta})|$. 				Further, by the inequality \eqref{arviog} we have 
		\begin{equation*}
				\begin{aligned}
				& \int_{0}^{2\pi} \chi_{\log{|g| \ge 0}}\log |g(a+2Re^{i \theta})| d \theta \\
				&\quad< \int_{0}^{2\pi} \left|\log \left|(2R+T)^{d_{\mathcal{L}}(\frac{1}{2}-a+2R)}M(T)\exp(V^*(T))\right| \right |d\theta.
				\end{aligned}
		\end{equation*}
		
		The last step is to integrate the terms. By the definition of the functions $M(T)$ and $V^*(T)$ we have
		\begin{align*}
				&  \left|\log \left|(2R+T)^{d_{\mathcal{L}}(\frac{1}{2}-a+2R)}M(T)\exp(V^*(T))\right| \right | \\
				& \quad\le d_{\mathcal{L}}\left(\frac{1}{2}-a+2R\right)\log (2T) +\log{\frac{a_1\pi^2}{6}}+\max \left\{\max\Bigg\{\frac{5}{2}\log{|\lambda Q^2|},						\right.\\
				& \quad\quad\left. \left(\frac{1}{2}-a+2R\right)\log{|\lambda Q^2|}\Bigg\}-2d_{\mathcal{L}} +\left|1-\frac{(a+2R)i}{T-2R} \right|d_{\mathcal{L}}						\left(\frac{1}{2}-a+2R\right) \right. \\
				&\quad\quad \left.+d_{\mathcal{L}}(a+2R)+\frac{a+2R}{T-2R}\left|\frac{\Im(\mu)}{2} \right|, \left(\frac{5}{2}d_{\mathcal{L}}+1\right)\log{2}+k\log{3}					\right. \\ 
				&\quad\quad \left. +\max\Bigg\{0,\frac{5}{2}\log{|\lambda Q^2|}+\frac{5\sqrt{5}}{2}d_{\mathcal{L}}+|\Im(\mu)|\Bigg\} \right\}+V^*(T).  
		\end{align*}
		Thus we have
		\begin{equation*}
				\begin{aligned}
				 		& \left |\frac{1}{2\pi \log 2}\int_{0}^{2\pi} \log |g(a+2Re^{i \theta})| d\theta \right| \\
						&\quad < \frac{1}{\log 2} \left(\vphantom{ \max\bigg\{0, \frac{5}{2}\log{|\lambda Q^2|}+\frac{5\sqrt{5}}{2}												d_{\mathcal{L}}+|\Im(\mu)| \bigg\} \Bigg\} }d_{\mathcal{L}}\left(\frac{1}{2}-a+2R\right)\log (2T) +\log{\frac{a_1\pi^2}{6}}+V^*(T) \right. \\	
						&\quad\quad \left.+\max \Bigg\{\max\bigg\{\frac{5}{2}\log{|\lambda Q^2|}, \left(\frac{1}{2}-a+2R\right)\log{|\lambda Q^2|} \bigg								\}-2d_{\mathcal{L}} \right.\\
						& \quad\quad\left.+\left|1-\frac{(a+2R)i}{T-2R} \right|d_{\mathcal{L}}\left(\frac{1}{2}-a+2R\right)+d_{\mathcal{L}}(a+2R)+\frac{a+2R}							{T-2R}\left|\frac{\Im(\mu)}{2} \right|, \right.\\
						&\quad\quad \left.  \left(\frac{5}{2}d_{\mathcal{L}}+1\right)\log{2}+k\log{3}+ \max\bigg\{0, \frac{5}{2}\log{|\lambda Q^2|}+\frac{5\sqrt{5}}							{2}d_{\mathcal{L}}+|\Im(\mu)| \bigg\} \Bigg\} \right) \\
						&\quad = R_2(T).
				\end{aligned}
		\end{equation*}
\end{proof}

We define a new function $n(r)$ and estimate it. The estimate is used to estimate the integral $I_3(T,a,b)$.

\begin{lause}
\label{I31}
		Assume that $T$ satisfies the same conditions as in Theorem \ref{I3I4lemma}. Let $n(r)$ be the number of the zeros of the function $g(z)$ in $|z-a| \le r$, where $g(z)$ is as 			in Lemma \ref{ylarajapositiivinen}. Then $n(R) < R_2(T)+1$.
\end{lause}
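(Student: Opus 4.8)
The plan is to read off $n(R)$ from Jensen's formula applied to $g$ on the disc $|z-a|\le 2R$, combined with the classical monotonicity trick of Littlewood and Backlund, and then to feed the resulting circular integral into Lemma \ref{ylarajapositiivinen}. Before doing anything I would check the two hypotheses needed for Jensen's formula: that $g$ is holomorphic on the closed disc $|z-a|\le 2R$ and that $g(a)\ne0$. The only singularity $\mathcal{L}$ can have is the pole at $s=1$; writing $g(z)=\tfrac12\bigl(\mathcal{L}(z+iT)+\overline{\mathcal{L}(\bar z+iT)}\bigr)$, the two potential singularities sit at $z=1-iT$ and $z=1+iT$, both with $|\Im z|=T$. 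Since $T\ge 2R+1$, every point of the disc has $|\Im z|\le 2R<T$, so $g$ is holomorphic there. For the centre, $a$ is real, so $g(a)=\Re\mathcal{L}(a+iT)$; as $a>2$ the defining Dirichlet series gives $|\mathcal{L}(a+iT)-1|\le\sum_{n\ge2}a_1 n^{-a}<\tfrac12$, whence $g(a)=\Re\mathcal{L}(a+iT)>\tfrac12$. In particular $g(a)\ne0$ and $-\log|g(a)|<\log 2$.

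With holomorphy and $g(a)\ne0$ secured, Jensen's formula gives
\[
\int_0^{2R}\frac{n(r)}{r}\,dr=\frac{1}{2\pi}\int_0^{2\pi}\log\bigl|g(a+2Re^{i\theta})\bigr|\,d\theta-\log|g(a)|.
\]
Since $n(r)$ is non-decreasing, I would bound the left-hand integral from below by restricting to $[R,2R]$:
\[
n(R)\log 2=n(R)\int_R^{2R}\frac{dr}{r}\le\int_R^{2R}\frac{n(r)}{r}\,dr\le\int_0^{2R}\frac{n(r)}{r}\,dr.
\]
Combining the two displays with $-\log|g(a)|<\log 2$ and dividing by $\log 2$ yields
\[
n(R)<\frac{1}{2\pi\log 2}\int_0^{2\pi}\log\bigl|g(a+2Re^{i\theta})\bigr|\,d\theta+1.
\]

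To finish I would split according to the sign of the circular integral. If $\int_0^{2\pi}\log|g(a+2Re^{i\theta})|\,d\theta\ge0$, then Lemma \ref{ylarajapositiivinen} applies and bounds the first term above by $R_2(T)$, so $n(R)<R_2(T)+1$. If instead the integral is negative, the first term is negative and hence $n(R)<1$; since every summand in \eqref{R2def} is nonnegative and the outer maximum is strictly positive (e.g. its last argument already exceeds $\bigl(\tfrac52 d_{\mathcal{L}}+1\bigr)\log 2$), we have $R_2(T)>0$, so $n(R)<1\le R_2(T)+1$ once more. Either way the claim holds.

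I expect the analytic core — the Jensen-plus-monotonicity passage from the circle of radius $2R$ down to $n(R)$ — to be entirely routine. The only genuine care is in the preliminary bookkeeping: verifying that the pole of $\mathcal{L}$ stays outside the disc (which is exactly what the hypothesis $T\ge 2R+1$ buys us) and pinning down the lower bound $g(a)>\tfrac12$, together with matching the hypothesis of Lemma \ref{ylarajapositiivinen}, which only controls the integral when it is nonnegative, by handling the negative case by hand.
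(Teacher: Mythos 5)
Your proposal is correct and follows essentially the same route as the paper: Jensen's formula on the disc of radius $2R$ about $a$, the monotonicity bound $n(R)\log 2\le\int_0^{2R}n(r)\,dr/r$, the estimate $|\log|g(a)||\le\log 2$ from $\Re\,\mathcal{L}(a+iT)\in[\tfrac12,\tfrac32]$, and the case split on the sign of the circular integral with Lemma \ref{ylarajapositiivinen} handling the nonnegative case. Your explicit verification that $R_2(T)>0$ in the negative case is a small point the paper leaves implicit, but it is not a different method.
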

\begin{proof}
		First we estimate the value of the function $n(R)$ with the function $g(z)$ and its integral. Then we estimate the previous terms and obtain $n(R) < R_2(T)+1$.

		Since $\Im(z+iT)>0$ and $\Im(\bar{z}+iT)>0$ for $|z-a|<T$, the functions $\mathcal{L}(z+iT)$ and $\overline{\mathcal{L}(\bar{z}+iT)}$  are analytic in the disc $|z-a|<T			$. Thus the function $g(z)$ is analytic in the disc $|z-a|<T$. We have
		\begin{displaymath}
				\int_{0}^{2R} \frac{n(r)}{r} dr \ge \int_{R}^{2R} \frac{n(R)}{r} dr=n(R)\log(2).
		\end{displaymath}
		By Jensen's formula
		\begin{displaymath}
				\int_{0}^{2R} \frac{n(r)}{r} dr=\frac{1}{2\pi}\int_{0}^{2\pi} \log \left|g(a+2Re^{i \theta})\right| d\theta-\log |g(a)|.
		\end{displaymath}
		Thus 
		\begin{equation}
		\label{galaraja1}
				n(R) \le \frac{1}{2\pi \log 2}\int_{0}^{2\pi} \log \left|g(a+2Re^{i \theta})\right| d\theta-\frac{\log |g(a)|}{\log 2}.
		\end{equation}
		
		From the previous formula and Lemma \ref{ylarajapositiivinen} we see that it is enough to estimate the terms 
		\begin{equation*} 
				\frac{1}{2\pi \log 2}\int_{0}^{2\pi} \log \left|g(a+2Re^{i \theta})\right| d\theta
		\end{equation*} 
		if $\int_{0}^{2\pi} \log |g(a+2Re^{i \theta})| d \theta < 0$ and $\frac{\log |g(a)|}{\log 2}$.  By the assumptions for the number $a$ we have
		\begin{displaymath}
				\Re\big(\mathcal{L}(a+iT)\big)=\Re\left(1+\sum\limits_{n=2}^{\infty} \frac{a(n)}{n^{a+iT}}\right) \in \left[\frac{1}{2}, \frac{3}{2}\right].
		\end{displaymath}
		Since $g(a)=\Re(\mathcal{L}(a+iT))$, we have $\left|\log |g(a)|\right| \le \log 2$. Thus 
		\begin{equation}
		\label{galaraja2}
				\left|\frac{\log |g(a)|}{\log 2}\right|\le 1.
		\end{equation}

		Next we look at the case $\int_{0}^{2\pi} \log |g(a+2Re^{i \theta})| d \theta <0$. By the definition of the function $n(R)$ we have $n(R) \ge 0$. Also by 					\eqref{galaraja1} and \eqref{galaraja2}
		\begin{displaymath}
				-1 \le -\left|\frac{\log |g(a)|}{\log 2}\right|+n(R)  \le \frac{1}{2\pi \log 2}\int_{0}^{2\pi} \log \left|g(a+2Re^{i \theta})\right| d\theta.	
		\end{displaymath}
		Thus 
		\begin{equation}
		\label{ylarajanegatiivinen}
				\left|\frac{1}{2\pi \log 2}\int_{0}^{2\pi} \log \left|g(a+2Re^{i \theta})\right| d\theta \right| \le 1
		\end{equation}
		if $\int_{0}^{2\pi} \log |g(a+2Re^{i \theta})| d \theta < 0$.
		By \eqref{galaraja1}, \eqref{galaraja2}, \eqref{ylarajanegatiivinen} and Lemma \ref{ylarajapositiivinen} we get the result
		\begin{equation*}
				n(R) < R_2(T)+1.
		\end{equation*}
\end{proof}

In the following theorem we estimate the integral $I_3(T,a,b)=\int_{b}^{a} \mathcal{L}(\sigma+iT)d\sigma$ using the previous theorem.

\begin{lause}
\label{I32}
If $T$ satisfies the same conditions as in Theorem \ref{I3I4lemma}, then
		\begin{equation*}
				|I_3(T,a,b)| < \pi R\left(R_2(T)+2\right).
		\end{equation*}
\end{lause}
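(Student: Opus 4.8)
The plan is to bound $|\arg\mathcal{L}(\sigma+iT)|$ uniformly for $\sigma\in[b,a]$ and then integrate this over an interval of length $R=a-b$. The key observation is that for real $\sigma$ the function $g$ of Lemma~\ref{ylarajapositiivinen} satisfies $g(\sigma)=\Re\mathcal{L}(\sigma+iT)$, so the real zeros of $g$ in $[b,a]$ are exactly the points where $\Re\mathcal{L}(\sigma+iT)=0$. Since $a$ is the centre of the disc and $b$ lies at distance $R$ from it, the whole segment $[b,a]$ is contained in $|z-a|\le R$; hence the number $q$ of such zeros satisfies $q\le n(R)$, and by Theorem~\ref{I31} we get $q\le n(R)<R_2(T)+1$.

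First I would control the argument at the right endpoint. By the choice of $a$ we have $\big|\sum_{n=2}^{\infty}a(n)/n^{a+iT}\big|\le\sum_{n=2}^{\infty}a_1/n^{a}<\tfrac12$, so $\Re\mathcal{L}(a+iT)>\tfrac12>0$ and therefore $|\arg\mathcal{L}(a+iT)|<\tfrac{\pi}{2}$. The same estimate gives $\Re\mathcal{L}(\sigma+iT)>0$ for every $\sigma\ge a$, so no sign change occurs to the right of $a$ and the branch of $\arg$ obtained by continuation from $\sigma\to+\infty$ starts inside $\big(-\tfrac{\pi}{2},\tfrac{\pi}{2}\big)$ at $\sigma=a$.

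Next comes the Backlund sign-change argument \cite{backlund}. As $\sigma$ decreases from $a$ to $b$, the function $\arg\mathcal{L}(\sigma+iT)$ varies continuously and takes a value in $\tfrac{\pi}{2}+\pi\mathbb{Z}$ precisely when $\Re\mathcal{L}(\sigma+iT)=0$, that is, at the $q$ zeros of $g$ on the segment. Starting inside the interval $\big(-\tfrac{\pi}{2},\tfrac{\pi}{2}\big)$, the continuous argument can cross these barriers at most $q$ times, so for every $\sigma\in[b,a]$
\begin{equation*}
		|\arg\mathcal{L}(\sigma+iT)|<\Big(q+\tfrac12\Big)\pi<\big(n(R)+1\big)\pi<\big(R_2(T)+2\big)\pi .
\end{equation*}

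Finally I would integrate this uniform bound. Since $I_3(T,a,b)=\int_{b}^{a}\arg\mathcal{L}(\sigma+iT)\,d\sigma$ and the interval has length $a-b=R$,
\begin{equation*}
		|I_3(T,a,b)|\le\int_{b}^{a}|\arg\mathcal{L}(\sigma+iT)|\,d\sigma<\big(R_2(T)+2\big)\pi\,R,
\end{equation*}
which is the claim. The main obstacle is the third step: making the passage from counting zeros of $\Re\mathcal{L}$ to a bound on $\arg\mathcal{L}$ rigorous, including the case where $T$ is the ordinate of a zero of $\mathcal{L}$ lying on the segment (where $\arg$ has a jump), which must be handled by the usual convention or a limiting argument. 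One must also confirm that the real sign-change points on $[b,a]$ are genuinely among the zeros counted by $n(R)$, so that $q\le n(R)$ holds as stated.
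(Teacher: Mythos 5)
Your proposal is correct and follows essentially the same route as the paper: identify the sign changes of $\Re\mathcal{L}(\sigma+iT)$ on $[b,a]$ with zeros of $g$ in the disc $|z-a|\le R$, bound their number by $n(R)<R_2(T)+1$ via Theorem \ref{I31}, convert this to the bound $|\arg\mathcal{L}(\sigma+iT)|\le\pi(n(R)+1)$ by the Backlund sign-change argument, and integrate over the interval of length $R$. Your version is in fact slightly more careful than the paper's in pinning down the starting value $|\arg\mathcal{L}(a+iT)|<\tfrac{\pi}{2}$, which the paper leaves implicit.
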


\begin{proof}
		Assume that the function $\Re\left(\mathcal{L}(\sigma+iT)\right)$ has $N$ zeros for $b\le \sigma \le a$. Now the sign of the $\Re(\mathcal{L}(\sigma+iT))$ changes at 			most $N+1$ times in the interval $\sigma \in [b,a]$. We can divide the interval $[b,a]$ to $N+1$ parts where $\Re(\mathcal{L}(\sigma+iT))$ is of constant sign. When 			we sum the maximum absolute values of the argument in each of these intervals, we get
		\begin{equation*}
		\label{arvioI3I4N}
				\big|\arg(\mathcal{L}(\sigma+iT))\big|\le \pi(N+1).
		\end{equation*} 
		Let $n(R)$ and $g(z)$ be as in Theorem \ref{I31}. Since $(b,a) \subseteq \{z :|z-a|\le R\}$ and $g(\sigma)=\Re(\mathcal{L}(\sigma+iT))$, we have $N\le n(R)$. Thus
		\begin{equation*}
				\pi(N+1) \le \pi (n(R)+1).
 		\end{equation*}
		By Theorem \ref{I31} we have $ \pi (n(R)+1) < \pi (R_2(T)+2)$. It follows that
		\begin{equation*}
				|I_3(T,a,b)| \le \int_{b}^{a} |\arg\mathcal{L}(\sigma+iT)| d\sigma <  \pi R(R_2(T)+2).
		\end{equation*}
\end{proof}

\begin{remark}
\label{noteI3b1}
		Since $(b+1,a) \subset (b,a)$ it also holds that $|I_3(T,a,b+1)| < \pi R(R_2(T)+2)$.
\end{remark}

\section{Main result}
\label{sectionMainResult}

In this section we prove the explicit version of the Riemann-von Mangoldt type formula for the functions of the set $S$. Let $\mathcal{N}_\mathcal{L}^{+}(T_0,T)$ and $\mathcal{N}_\mathcal{L}^{-}(T_0,T)$ be the number of the non-trivial zeros $\rho$ of the function $\mathcal{L}(s)$ with $T_0 < \Im(\rho) \le T$ and $-T \le \Im(\rho) < -T_0$ respectively. First we combine the results from the Sections \ref{preliminaries}, \ref{estimatesI2I1} and \ref{estimatesI3I4} to estimate the functions $\mathcal{N}_\mathcal{L}^{+}(T_0,T)$ and $\mathcal{N}_\mathcal{L}^{-}(T_0,T)$. We remember that $a_1$ is a constant such that for all $n$ we have $|a(n)|\le a_1n$. Also, $a$ is a real number for which $a>2$ and 
\begin{equation*}
		\sum\limits_{n=2}^{\infty}\frac{a_1}{n^{a}}<\frac{1}{2}
\end{equation*} 
and $b<-3$ is a negative real number which has the following property:  
\begin{equation*}
		\sum\limits_{n=2}^{\infty}\frac{a_1}{n^{-b-1}}<1.
\end{equation*} 
We have defined that $R=a-b$. The constants $d_\mathcal{L}$, $\lambda$, $Q$, $f$, $\mu_j$ and $\lambda_j$ depend on the function $\mathcal{L}$ and are defined at the beginning of the Section \ref{intro}. The function $R_1(T_0,T,b)$ is defined in the formula \eqref{R1def} of the Section \ref{sectionI1} and the function $R_2(t)$ is defined in the formula \eqref{R2def} of the Section \ref{sectionI3}. Let
\begin{equation*}
		\begin{aligned}
				&R_\mathcal{L}(T_0,T)\\
				&\quad= \frac{d_{\mathcal{L}}}{2 \pi}T_0 \log \frac{T_0}{e}+\frac{T_0}{2\pi}\left|\log(\lambda Q^2)\right|+\frac{R_1(T_0,T,b)}{2\pi}+\frac{\pi}						{3\log{2}} \\
				&\quad\quad +\left(R-\frac{1}{2}\right)\left(R_2(T_0)+R_2(T)+4\right)+ f\cdot\left(\left|(b+1)\max_j\{\lambda_j\}+\min_j\{\Re(\mu_j)\}\right| \right. \\
				& \quad\quad \left. \vphantom{\left|(b+1)\max_j\{\lambda_j\}+\min_j\{\Re(\mu_j)\}\right|}-b\max_j\{\lambda_j\}+(b+1)\min_j\{\lambda_j\}-							\min_j\{\Re(\mu_j)\}+\max_j\{\Re(\mu_j)\}\right).
		\end{aligned}
\end{equation*}
 
First we prove a useful lemma and then we use it to prove the main result.
\begin{lemma}
\label{N+}
		Assume that $T_0$ satisfies the same conditions as $T$ in Theorem \ref{I3I4lemma} and $T>T_0$ is a real number. Then 
		\begin{displaymath}
				\left | \mathcal{N}_\mathcal{L}^{\pm}(T_0,T)-\frac{d_\mathcal{L}}{2\pi}T\log{\frac{T}{e}}-\frac{T}{2\pi}\log({\lambda Q^2}) \right  |<R_							\mathcal{L}(T_0,T).
		\end{displaymath}
\end{lemma}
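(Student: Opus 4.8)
The plan is to apply Littlewood's formula (Lemma~\ref{summaIntegraalit}) twice, once with the left edge at $b$ and once at $b+1$, and to subtract the two identities. In the difference the common integral $I_2(T_0,T,a)$ cancels, and, more importantly, the unknown quantity $\sum_\rho \Re(\rho)$ drops out: writing the two left-hand sides as $\sum(\Re\rho-b)$ and $\sum(\Re\rho-b-1)$ over the sets of zeros with $T_0<\Im\rho\le T$ lying to the right of the respective vertical lines, their difference collapses to the plain count of those zeros plus the weighted contribution of the zeros with $b<\Re\rho\le b+1$. Since every non-trivial zero satisfies $0\le\Re\rho\le 1$ and $b+1<-2$, the plain count is exactly $\mathcal{N}_{\mathcal{L}}^{+}(T_0,T)$, while the only zeros that can lie in the strip $b<\Re(s)\le b+1$ are trivial ones $-\frac{l+\mu_j}{\lambda_j}$; counting these explicitly, uniformly in $j$ through the maxima and minima of $\lambda_j$ and $\Re(\mu_j)$, bounds their contribution by the term $f\cdot(\dots)$ in $R_{\mathcal{L}}(T_0,T)$. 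Thus $2\pi\,\mathcal{N}_{\mathcal{L}}^{+}(T_0,T)$ equals $[I_1(T_0,T,b)-I_1(T_0,T,b+1)]-I_3(T_0,a,b)+I_3(T_0,a,b+1)+I_3(T,a,b)-I_3(T,a,b+1)$ up to that trivial-zero error.

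Next I would extract the main term. By Theorem~\ref{I1} the difference $I_1(T_0,T,b)-I_1(T_0,T,b+1)$ agrees with $\int_{T_0}^{T}\big(d_{\mathcal{L}}\log t+\log(\lambda Q^2)\big)\,dt$ up to an error below $R_1(T_0,T,b)+\frac{2\pi^2}{3\log 2}$. Evaluating the integral gives $d_{\mathcal{L}}T\log\frac{T}{e}+\log(\lambda Q^2)T-d_{\mathcal{L}}T_0\log\frac{T_0}{e}-\log(\lambda Q^2)T_0$. After dividing by $2\pi$, the first two summands are precisely the main term $\frac{d_{\mathcal{L}}}{2\pi}T\log\frac{T}{e}+\frac{T}{2\pi}\log(\lambda Q^2)$ of the statement; the two $T_0$-summands are absorbed, on taking absolute values, into the pieces $\frac{d_{\mathcal{L}}}{2\pi}T_0\log\frac{T_0}{e}$ and $\frac{T_0}{2\pi}\left|\log(\lambda Q^2)\right|$ of $R_{\mathcal{L}}(T_0,T)$, and the $I_1$-error contributes $\frac{R_1(T_0,T,b)}{2\pi}+\frac{\pi}{3\log 2}$.

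I would then bound the four boundary integrals. Theorem~\ref{I32} yields $|I_3(T_0,a,b)|<\pi R(R_2(T_0)+2)$ and $|I_3(T,a,b)|<\pi R(R_2(T)+2)$, while the same argument over the shorter interval $[b+1,a]$ (Remark~\ref{noteI3b1}, now with integration length $R-1$) gives $|I_3(T_0,a,b+1)|<\pi(R-1)(R_2(T_0)+2)$ and $|I_3(T,a,b+1)|<\pi(R-1)(R_2(T)+2)$. Adding the four bounds and dividing by $2\pi$ produces exactly $\big(R-\frac12\big)\big(R_2(T_0)+R_2(T)+4\big)$. Collecting the main term, the $I_1$-error, this $I_3$-bound, and the trivial-zero term, and applying the triangle inequality, gives the claimed inequality for $\mathcal{N}_{\mathcal{L}}^{+}$. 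The bound for $\mathcal{N}_{\mathcal{L}}^{-}$ then follows by running the identical argument on the conjugate $\overline{\mathcal{L}(\bar s)}$, which again lies in $S$ with the same $d_{\mathcal{L}}$, $\lambda$, $Q$ and whose upper half-plane zeros are the reflections of the lower half-plane zeros of $\mathcal{L}$.

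I expect the main obstacle to be the bookkeeping around the cancellation rather than any single hard estimate, since the analytic work is already packaged in Theorems~\ref{I1} and~\ref{I32}. Making the subtraction of the two Littlewood identities rigorous requires controlling precisely the zeros with $b<\Re\rho\le b+1$ (so that the unknown $\sum\Re\rho$ genuinely drops out and the trivial-zero correction $f\cdot(\dots)$ is correctly quantified), and then matching every $T_0$-dependent remnant of the evaluated main integral against the right summand of $R_{\mathcal{L}}(T_0,T)$.
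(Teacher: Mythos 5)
Your proposal follows the paper's proof essentially step for step: subtract the two Littlewood identities (Lemma \ref{summaIntegraalit}) at $b$ and $b+1$ so that $I_2$ and the unknown $\sum\Re(\rho)$ drop out, extract the main term from Theorem \ref{I1}, bound the four boundary integrals via Theorem \ref{I32} and Remark \ref{noteI3b1} (using length $R-1$ for the $b+1$ integrals, which is exactly what produces the factor $R-\tfrac12$), and absorb the trivial zeros into the $f\cdot(\dots)$ term. The only slip is that the plain count of zeros with $\Re(\rho)>b+1$ is not exactly $\mathcal{N}_{\mathcal{L}}^{+}(T_0,T)$ but also includes trivial zeros with $b+1<\Re(\rho)<0$ in the height range; since the paper's $f\cdot(\dots)$ bound is written precisely to cover these together with the weighted sum over $(b,b+1]$, your final inequality is unaffected.
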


\begin{proof}
		Since the functions $\mathcal{L}(\bar{s})$ and $\overline{\mathcal{L}(\bar{s})}=\sum\limits_{n=1}^\infty \frac{\overline{a(n)}}{n^s}$ have the same zeros, we need to 			prove the claim only for the function $\mathcal{N}_\mathcal{L}^{+}(T_0,T)$. By Lemma \ref{sumIntegrals} for the zeros $\rho$ of the function $\mathcal{L}$ we 				have
		\begin{displaymath}
				2\pi\sum_{\substack{T_0 < \Im(\rho) \le T \\ \Re(\rho) > b}} (\Re(\rho)-b)=I_1(T_0,T,b)-I_2(T_0,T,a)-I_3(T_0,a,b)+I_3(T,a,b).
		\end{displaymath}
		We can subtract the formula containing $b+1$ from the formula containing $b$ and get
		\begin{align*}
				& 2\pi \mathcal{N}_\mathcal{L}^{+}(T_0,T)+2\pi\sum_{\substack{T_0 < \Im(\rho) \le T \\ 0>\Re(\rho)>b+1}} 1+2\pi\sum_{\substack{T_0 < \Im(\rho) \le T 					\\ b+1 \ge \Re(\rho)>b}}(\Re(\rho)-b) \\
				& \quad= I_1(T_0,T,b)-I_1(T_0,T,b+1)-I_3(T_0,a,b) \\
				&\quad\quad+I_3(T_0,a,b+1)+I_3(T,a,b)-I_3(T,a, b+1).
		\end{align*}
		By Theorems \ref{I1}, \ref{I32} and Remark \ref{noteI3b1} we have
		\begin{equation}
		\label{NIntegral}
				\begin{aligned}
						& \left|\mathcal{N}_\mathcal{L}^{+}(T_0,T)-\frac{d_\mathcal{L}}{2\pi}T\log{\frac{T}{e}}-\frac{T}{2\pi}\log({\lambda Q^2})									\right | \\
						& \quad< \frac{d_{\mathcal{L}}}{2 \pi}T_0 \log \frac{T_0}{e}+\frac{T_0}{2\pi}\left|\log(\lambda Q^2)\right|+\frac{R_1(T_0,T,b)}{2\pi}+							\frac{\pi}{3\log{2}} \\
						&\quad\quad +\left(R-\frac{1}{2}\right)\left(R_2(T_0)+R_2(T)+4\right)+\left|\sum_{\substack{T_0 < \Im(\rho) \le T \\ 0>\Re(\rho)>b+1}} 							1+\sum_{\substack{T_0 < \Im(\rho)\le T \\ b-1 \ge \Re(\rho)>b}}(\Re(\rho)-b)\right|.
				\end{aligned}
		\end{equation}
		Since all the trivial zeros are of the form $s=-\frac{l+\mu_j}{\lambda_j}$, where $l=0, 1, 2,\ldots$ and $j \in [1,f]$, we have
		\begin{equation}
		\label{trivialEstimate}
				\begin{aligned}
						& \left|\sum_{\substack{T_0 < \Im(\rho) \le T \\ 0>\Re(\rho)>b+1}} 1+\sum_{\substack{T_0 < \Im(\rho) \le T \\ b+1 \ge 										\Re(\rho)>b}} (\Re(\rho)- b) \right| \\
						& \quad< f\cdot\left(\left|(b+1)\max_j\{\lambda_j\}+\min_j\{\Re(\mu_j)\}\right| \right. \\
						& \quad\quad \left. \vphantom{\left|(b+1)\max_j\{\lambda_j\}+\min_j\{\Re(\mu_j)\}\right|}-b\max_j\{\lambda_j\}+(b+1)\min_j\{\lambda_j\}-							\min_j\{\Re(\mu_j)\}+\max_j\{\Re(\mu_j)\}\right).
				\end{aligned}
		\end{equation}
		The claim follows from the inequalities \eqref{NIntegral} and \eqref{trivialEstimate}.			
\end{proof}

Next we prove the main result by estimating the term $R_\mathcal{L}(T_0,T)$. We want that it holds that 
$$
		\left|R_\mathcal{L}(T_0,T)\right|\le c_{\mathcal{L},1}\log{T}+c_{\mathcal{L},2}(T_0)+\frac{c_{\mathcal{L},3}(T_0)}{T},
$$ 
where the terms $c_{\mathcal{L},j}(T_0)$ are real numbers which depend on the function $\mathcal{L}$ and the number $T_0$ and the real number $c_{\mathcal{L},1}$ depends only on the function $\mathcal{L}$. To shorten our notation we define that
\begin{equation*}
\begin{aligned}
		h_{\mathcal{L},1}&= (1-\alpha)\left(\max\bigg\{\frac{5}{2}\log{|\lambda Q^2|}, \left(\frac{1}{2}-a+2R\right)\log{|\lambda Q^2|} \bigg\}+d_{\mathcal{L}}\left(-\frac{3}				{2}+4R\right)\right) \\
		& \quad+\alpha\left(\left(\frac{5}{2}d_{\mathcal{L}}+1\right)\log{2}+k\log{3}+ \max\bigg\{0, \frac{5}{2}\log{|\lambda Q^2|}+\frac{5\sqrt{5}}{2}d_{\mathcal{L}}+|			\Im(\mu)| \bigg\}\right) 
\end{aligned}
\end{equation*}
and
\begin{equation*}
		h_{\mathcal{L},2}=(1-\alpha)\left(d_{\mathcal{L}}\left(\frac{1}{2}-a+2R\right)(a+2R)+(a+2R)\left|\frac{\Im(\mu)}{2} \right|\right)
\end{equation*}
where the number $\alpha \in \{0,1\}$. If the sum $h_{\mathcal{L},1}+\frac{h_{\mathcal{L},2}}{T_0-2R}$ is bigger for $\alpha=0$ than $\alpha=1$ then $\alpha=0$. Otherwise $\alpha=1$. Using this notation we obtain the main result:
\begin{lause}
\label{mainResult}
		Suppose that $T_0$ and $T$ satisfy the same conditions as in Lemma \ref{N+}. Then we have
		\begin{equation*}
				|R_\mathcal{L}(T_0,T)| \le c_{\mathcal{L},1}(T_0)\log{T}+c_{\mathcal{L},2}(T_0)+\frac{c_{\mathcal{L},3}(T_0)}{T},
		\end{equation*}
		where			
		\begin{equation*}
				\begin{aligned}
						c_{\mathcal{L},1}&=\frac{1}{2\pi}\left(-7\frac{d_{\mathcal{L}}}{2}(2b+1)+2\left|-d_{\mathcal{L}}b+\frac{\Im(\mu)i}{2} \right| 									+2d_{\mathcal{L}}+S(1,e,b)\right) \\
						& \quad+ \frac{1}{\log 2}\left(R-\frac{1}{2}\right)d_{\mathcal{L}}\left(\frac{1}{2}-a+2R\right),
				\end{aligned}
		\end{equation*}
		\begin{equation*}
				\begin{aligned}
						c_{\mathcal{L},2}(T_0)&=\frac{d_{\mathcal{L}}}{2 \pi}T_0 \log \frac{T_0}{e}+\frac{T_0}{2\pi}\left|\log(\lambda Q^2)\right|+\frac{\pi}								{3\log{2}}+4R-2 \\
						& \quad +\frac{3d_{\mathcal{L}}(b^2+b)}{2\pi T_0}+ f\cdot\left(\left|(b+1)\max_j\{\lambda_j\}+\min_j\{\Re(\mu_j)\}\right| \right. \\	
						& \quad \left. \vphantom{\left|(b+1)\max_j\{\lambda_j\}+\min_j\{\Re(\mu_j)\}\right|}-b\max_j\{\lambda_j\}+(b+1)\min_j\{\lambda_j\}-								\min_j\{\Re(\mu_j)\}+\max_j\{\Re(\mu_j)\}\right) \\
						& \quad +\frac{1}{2\pi}R_1(T_0,1,b)+\left(R-\frac{1}{2}\right)\left(R_2(T_0)+d_{\mathcal{L}}\left(\frac{1}{2}-a+2R\right)\right) \\
						& \quad+\frac{1}{\log{2}}\left(R-\frac{1}{2}\right)\left(\log{\frac{a_1\pi^2}{6}}+h_{\mathcal{L},1} \right)
				\end{aligned}
		\end{equation*}
		and
		\begin{equation*}
				\begin{aligned}
						c_{\mathcal{L},3}(T_0)&=\frac{1}{\log{2}}\left(R-\frac{1}{2}\right)\frac{T_0}{T_0-2R}\left(V^*(2R+1)+h_{\mathcal{L},2}\right). 
				\end{aligned}
		\end{equation*}
\end{lause}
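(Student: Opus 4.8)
The plan is to start from the explicit definition of $R_\mathcal{L}(T_0,T)$ and sort each summand according to its dependence on $T$. All the terms are non-negative or appear in absolute value, so it suffices to bound $R_\mathcal{L}(T_0,T)$ from above: collecting the coefficients of $\log T$ will produce $c_{\mathcal{L},1}$, the summands independent of $T$ (but possibly depending on $T_0$ and on the data of $\mathcal{L}$) will produce $c_{\mathcal{L},2}(T_0)$, and the summands decaying like $1/T$ will produce $c_{\mathcal{L},3}(T_0)/T$. The only places where $T$ genuinely occurs are $R_1(T_0,T,b)/(2\pi)$ and, through the block $(R-\tfrac12)\bigl(R_2(T_0)+R_2(T)+4\bigr)$, the term $R_2(T)$; every other summand depends only on $T_0$ and on $\mathcal{L}$ and is dropped straight into $c_{\mathcal{L},2}(T_0)$ (for instance $\frac{d_\mathcal{L}}{2\pi}T_0\log\frac{T_0}{e}$, the trivial-zero contribution, and $(R-\tfrac12)R_2(T_0)$; note that $(R-\tfrac12)\cdot 4=4R-2$).

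First I would isolate the $T$-dependence of $R_1(T_0,T,b)$. From \eqref{R1def} and the definition of $S(T_0,T,b)$, each $S_j(T_0,T,b)$ equals $\frac{2}{\lambda_j}\log\frac{T}{T_0}$ times a factor free of $T$ and $T_0$, so $S(T_0,T,b)=S(1,e,b)\log\frac{T}{T_0}$, and hence $R_1(T_0,T,b)=R_1(T_0,1,b)+A\log T$, where $A=-7\frac{d_\mathcal{L}}{2}(2b+1)+2\bigl|-d_\mathcal{L}b+\frac{\Im(\mu)i}{2}\bigr|+2d_\mathcal{L}+S(1,e,b)$ is precisely the bracket multiplying $\log\frac{T}{T_0}$ together with the $S$-coefficient. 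Dividing by $2\pi$, the piece $\frac{A}{2\pi}\log T$ gives the first line of $c_{\mathcal{L},1}$, while $\frac{1}{2\pi}R_1(T_0,1,b)$ (carrying the $\frac{3d_\mathcal{L}(b^2+b)}{T_0}$ remainder) is absorbed into $c_{\mathcal{L},2}(T_0)$.

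Next I would treat $R_2(T)$ from \eqref{R2def}. Writing $\log(2T)=\log T+\log 2$, the $\log T$ part, after the weight $(R-\tfrac12)\frac{1}{\log 2}d_\mathcal{L}(\tfrac12-a+2R)$, supplies the second line of $c_{\mathcal{L},1}$, and the $\log 2$ part leaves the $T$-free constant $(R-\tfrac12)d_\mathcal{L}(\tfrac12-a+2R)$ for $c_{\mathcal{L},2}(T_0)$. For the error factor I would use that $V^*(T)=V^*(2R+1)/(T-2R)$, since $V^*(T)$ is $\frac{1}{T-2R}$ times a quantity free of $T$. The delicate part is the inner $\max\{\cdot,\cdot\}$ in $R_2(T)$: in the first branch I would bound $\bigl|1-\frac{(a+2R)i}{T-2R}\bigr|\le 1+\frac{a+2R}{T-2R}$ by the triangle inequality and simplify $d_\mathcal{L}(\tfrac12-a+2R)+d_\mathcal{L}(a+2R)-2d_\mathcal{L}=d_\mathcal{L}(-\tfrac32+4R)$, which splits that branch into the $T$-free part recorded in $h_{\mathcal{L},1}$ (case $\alpha=0$) and the $\frac{1}{T-2R}$ part recorded in $h_{\mathcal{L},2}$; the second branch is already $T$-free and gives $h_{\mathcal{L},1}$ in the case $\alpha=1$. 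Choosing $\alpha\in\{0,1\}$ as the branch maximizing $h_{\mathcal{L},1}+\frac{h_{\mathcal{L},2}}{T_0-2R}$ at the initial height, together with the monotonicity of $T\mapsto\frac{1}{T-2R}$ on $(2R,\infty)$, is what lets me majorize the max uniformly on $[T_0,\infty)$ by $h_{\mathcal{L},1}+\frac{h_{\mathcal{L},2}}{T-2R}$ with a single $\alpha$.

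Finally I would convert every surviving $\frac{1}{T-2R}$ into the required $\frac1T$ shape: since $T\mapsto\frac{T}{T-2R}$ is decreasing for $T>2R$, one has $\frac{1}{T-2R}\le\frac{1}{T}\cdot\frac{T_0}{T_0-2R}$ for all $T\ge T_0$, and applying this to $V^*(2R+1)+h_{\mathcal{L},2}$ (after the weight $\frac{1}{\log 2}(R-\tfrac12)$) yields exactly $c_{\mathcal{L},3}(T_0)/T$. Collecting the three groups then reproduces the stated $c_{\mathcal{L},1}$, $c_{\mathcal{L},2}(T_0)$ and $c_{\mathcal{L},3}(T_0)$. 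I expect the main obstacle to be precisely the uniform handling of the inner $\max$ in $R_2(T)$: one must verify that selecting $\alpha$ by its value at $T_0$ really gives an upper bound valid for every $T\ge T_0$, and not merely at the endpoint, and that the triangle-inequality and $V^*$ estimates are arranged so the non-decaying and the $O(1/T)$ parts separate cleanly.
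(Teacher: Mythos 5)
Your proposal is correct and follows essentially the same route as the paper's own proof, which consists of the single observation that the claim ``follows from the definition of $R_\mathcal{L}(T_0,T)$'' together with the two inequalities $\left|1-\frac{(a+2R)i}{T_0-2R}\right|\le 1+\frac{a+2R}{T_0-2R}$ and $\frac{1}{T-2R}\le\frac{T_0}{(T_0-2R)T}$; you have simply supplied the sorting of terms and the bookkeeping that the paper omits. The one obstacle you flag is real: when the $\alpha=0$ branch is selected at height $T_0$ but its $T$-independent part is strictly smaller than that of the $\alpha=1$ branch, the inner maximum in $R_2(T)$ need not stay below $h_{\mathcal{L},1}+\frac{T_0\,h_{\mathcal{L},2}}{(T_0-2R)T}$ for large $T$, and the paper's one-line proof does not address this point either.
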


\begin{proof}
		Since $\left|1-\frac{\left(a+2R\right)i}{T_0-2R} \le1+\frac{a+2R}{T_0-2R}\right| $ and $\frac{1}{T-2R}\le \frac{T_0}{(T_0-2R)T}$, the claim follows from the definition of 			the term $R_\mathcal{L}(T_0,T)$.
\end{proof}

Using the main result we can prove a useful corollary.  If we know the number of up to height $T_0$, we can also estimate the the number of zeros up to height $T$. Let $\mathcal{N}_{\mathcal{L}}^{+}(t)$ and $\mathcal{N}_{\mathcal{L}}^{-}(t)$ denote the number of the non-trivial zeros of the function $\mathcal{L}$ for which $0\le \Im(\rho) \le t$ and $-t \le \Im(\rho) \le 0$ respectively. We also notice that by \cite{steuding} we have that $\mathcal{N}_{\mathcal{L}}^{\pm}(T) \sim \frac{d_\mathcal{L}}{2\pi}T\log{T}$ and thus the numbers $\mathcal{N}_{\mathcal{L}}^{\pm}(T_0)$ are finite. Using these properties we obtain the following corollary:

\begin{corollary}
		Suppose that $T_0$ and $T$ satisfy the same conditions as in Lemma \ref{N+}. Since for all positive real numbers $c$ it holds that $c\le c\log{T}$, by Theorem 					\ref{mainResult} we get
		\begin{equation*}
				|R_\mathcal{L}(T_0,T)+\mathcal{N}_{\mathcal{L}}^{\pm}(T_0)| \le c_{\mathcal{L},1}\log{T}+C_{\mathcal{L},2}(T_0)+\frac{c_{\mathcal{L},3}(T_0)}{T},
		\end{equation*}
		for example, when $C_{\mathcal{L},2}(T_0)=c_{\mathcal{L},2}(T_0)+\max\{\mathcal{N}_{\mathcal{L}}^{+}(T_0),\mathcal{N}_{\mathcal{L}}^{-}(T_0)\}$.
\end{corollary}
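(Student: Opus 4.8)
The plan is to read the corollary as an immediate consequence of Theorem~\ref{mainResult} via the triangle inequality, the only genuinely new ingredient being that $\mathcal{N}_{\mathcal{L}}^{\pm}(T_0)$ is a nonnegative, finite constant. First I would record that $\mathcal{N}_{\mathcal{L}}^{+}(T_0)$ and $\mathcal{N}_{\mathcal{L}}^{-}(T_0)$, being counts of zeros up to height $T_0$, are nonnegative; their finiteness is exactly what the remark preceding the statement supplies, since $\mathcal{N}_{\mathcal{L}}^{\pm}(T)\sim\frac{d_\mathcal{L}}{2\pi}T\log T$ by \cite{steuding}. Hence $\mathcal{N}_{\mathcal{L}}^{\pm}(T_0)$ is a bona fide real number depending only on $\mathcal{L}$ and $T_0$, and in particular $\left|\mathcal{N}_{\mathcal{L}}^{\pm}(T_0)\right|=\mathcal{N}_{\mathcal{L}}^{\pm}(T_0)$.

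Next I would apply the triangle inequality in the form
\begin{equation*}
\left|R_\mathcal{L}(T_0,T)+\mathcal{N}_{\mathcal{L}}^{\pm}(T_0)\right|\le\left|R_\mathcal{L}(T_0,T)\right|+\mathcal{N}_{\mathcal{L}}^{\pm}(T_0),
\end{equation*}
where the absolute value on the second summand has been dropped by nonnegativity. Bounding the first summand by Theorem~\ref{mainResult} and the second by $\mathcal{N}_{\mathcal{L}}^{\pm}(T_0)\le\max\{\mathcal{N}_{\mathcal{L}}^{+}(T_0),\mathcal{N}_{\mathcal{L}}^{-}(T_0)\}$ yields
\begin{equation*}
\left|R_\mathcal{L}(T_0,T)+\mathcal{N}_{\mathcal{L}}^{\pm}(T_0)\right|\le c_{\mathcal{L},1}\log T+c_{\mathcal{L},2}(T_0)+\max\{\mathcal{N}_{\mathcal{L}}^{+}(T_0),\mathcal{N}_{\mathcal{L}}^{-}(T_0)\}+\frac{c_{\mathcal{L},3}(T_0)}{T}.
\end{equation*}
Setting $C_{\mathcal{L},2}(T_0)=c_{\mathcal{L},2}(T_0)+\max\{\mathcal{N}_{\mathcal{L}}^{+}(T_0),\mathcal{N}_{\mathcal{L}}^{-}(T_0)\}$ recovers the asserted inequality, leaving the $\log T$ coefficient $c_{\mathcal{L},1}$ and the $1/T$ term $c_{\mathcal{L},3}(T_0)$ untouched. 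Using the maximum rather than either count individually is precisely what allows one constant $C_{\mathcal{L},2}(T_0)$ to serve both the $+$ and the $-$ case simultaneously.

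Finally I would explain the elementary remark $c\le c\log T$ for positive $c$: it is valid here because the hypotheses on $T_0$ inherited from Theorem~\ref{I3I4lemma} force $T>T_0>e$, so $\log T\ge 1$. Its purpose is to note that the \emph{shape} of the bound is inessential, since the added constant $\max\{\mathcal{N}_{\mathcal{L}}^{+}(T_0),\mathcal{N}_{\mathcal{L}}^{-}(T_0)\}$ could equally be folded into the $\log T$ coefficient; I keep it inside $C_{\mathcal{L},2}(T_0)$ so that $c_{\mathcal{L},1}$ remains exactly as in Theorem~\ref{mainResult}, which is the natural normalization when $c_{\mathcal{L},1}$ is meant to depend only on $\mathcal{L}$. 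There is no substantial obstacle in this argument; the only points requiring care are confirming the nonnegativity and finiteness of $\mathcal{N}_{\mathcal{L}}^{\pm}(T_0)$, which justify removing the absolute value and passing to the maximum.
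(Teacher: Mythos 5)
Your proof is correct and is essentially the paper's own argument: the paper justifies this corollary in exactly the same way, by combining Theorem \ref{mainResult} with the triangle inequality, using the finiteness of $\mathcal{N}_{\mathcal{L}}^{\pm}(T_0)$ (from $\mathcal{N}_{\mathcal{L}}^{\pm}(T)\sim\frac{d_{\mathcal{L}}}{2\pi}T\log T$, noted just before the statement) and absorbing $\max\{\mathcal{N}_{\mathcal{L}}^{+}(T_0),\mathcal{N}_{\mathcal{L}}^{-}(T_0)\}$ into the constant term $C_{\mathcal{L},2}(T_0)$. Your extra check that the hypotheses force $T>T_0>e$, so that $\log T\ge 1$ and the remark $c\le c\log T$ applies, merely makes explicit a detail the paper leaves implicit.
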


Furthermore, we also would like to note one interesting and useful result. For the number of zeros in the interval $(T,2T]$ we obtain a formula with the error term where coefficients of the terms $\log{T}, 1$ and $\frac{1}{T}$ don't depend on the number $T$.

\begin{remark}
\label{remarkc}
		Using similar methods as in the proof of Lemma \ref{N+} and Theorem \ref{mainResult} we can prove that
		\begin{equation*}
				\left | \mathcal{N}_\mathcal{L}^{\pm}(T,2T)-\frac{d_\mathcal{L}}{2\pi}T\log{\frac{4T}{e}}-\frac{T}{2\pi}\log({\lambda Q^2}) \right  | \le c_1 \log{T}+c_2+					\frac{c_3}{T},
		\end{equation*}
		where
		\begin{equation*}
				c_1 =\frac{d_{\mathcal{L}}}{\log 2}\left(2R-1\right)\left(\frac{1}{2}-a+2R\right),
		\end{equation*}
		\begin{equation*}
				\begin{aligned}
						c_2 &=\frac{\log{2}}{2\pi}\left(-\frac{7d_\mathcal{L}}{2}(2b+1)+2\left|-d_\mathcal{L}b+\frac{\Im(\mu)i}{2}\right|+2d_\mathcal{L}\right)+							\frac{S(1,2,b)}{2\pi}+\frac{2\pi}{3\log{2}} \\
						& \quad +4R-2+\left(2R-1\right)\left(\frac{3d_{\mathcal{L}}}{2}\left(\frac{1}{2}-a+2R\right)+\frac{1}{\log{2}}\left(\log{\frac{a_1\pi^2}{6}}+							+h_{\mathcal{L},1}\right)\right)
				\end{aligned}
		\end{equation*}
		and
		\begin{equation*}
				c_3 =\frac{3d_\mathcal{L}(b^2+b)}{4\pi}+\frac{3T_0}{2(T_0-2R)\log{2}}\left(R-\frac{1}{2}\right)\left(V^*(2R+1)+h_{\mathcal{L},2}\right).
		\end{equation*}
\end{remark}

\section{Example: $L$-function associated with a holomorphic newform}
\label{numericalExample}

In this section we give examples of the values of the terms $c_{\mathcal{L},1}$, $c_{\mathcal{L},j}(T_0)$ and $c_j$ which are defined in Section \ref{sectionMainResult}. Since we have estimated these terms for a general set which contains $L$-functions other than the Riemann zeta function and Dirichlet $L$-functions, the estimates of the term $c_{\mathcal{L},1}$ and $c_{\mathcal{L},j}(T_0)$ for these functions are not as strong as previous estimates, see \cite{backlund} and \cite{trudgian2}.

Let $g$ be a newform of even weight $\kappa$ for some congruence subgroup
\begin{equation*}
		\Gamma_0(N)=\left\{ \begin{pmatrix} A & B\\ C & D \end{pmatrix} \in SL_2(\mathbb{Z}) :  C \equiv 0 \mod N \right\},
\end{equation*}
where $N$ is a positive integer and $SL_2(\mathbb{Z})$ is a set of $2\times 2$ matrices with integer entries and which determinant is $1$. We also assume that for $z\in \mathbb{H}$ the function $g$ has a Fourier expansion
\begin{equation*}
		g(z)=\sum\limits_{n=1}^\infty c(n)\exp(2\pi i n z).
\end{equation*}
We define 
\begin{equation*}
		\mathcal{L}(s)=\sum\limits_{n=1}^\infty \frac{a(n)}{n^s},
\end{equation*}
where $a(n)=c(n)n^{\frac{1-\kappa}{2}}$. The function $\mathcal{L}(s)$ satisfies the equation
\begin{equation*}
		\Lambda_{\mathcal{L}}(s)=\mathcal{L}(s)\left(\frac{\sqrt{N}}{2\pi}\right)^s\Gamma\left(s+\frac{\kappa-1}{2}\right),
\end{equation*}
where
\begin{equation*}
		\Lambda_{\mathcal{L}}(s)=i^{\kappa}\Lambda_{\mathcal{L}}(1-s).
\end{equation*}
Hence, we can choose 
\begin{equation*}
		f=1, Q=\frac{\sqrt{N}}{2\pi}, \lambda_j=1, \omega=i^\kappa \text{ and } \mu_j=\frac{\kappa-1}{2}. 
\end{equation*}
Thus we also have  $d_\mathcal{L}=2$, $\lambda=1$ and $\mu=4-2\kappa$. We also have $k=0$. By Deligne \cite{deligne1,deligne2} $|a(n)|\le 1$ and we can choose that $a_1=1$, $a=3$ and $b=-4$. By Theorem \ref{mainResult} we have $T_0\ge 15+\kappa$ and
\begin{equation*}
		\begin{aligned}
				c_{\mathcal{L},1}&= \frac{299}{2\log 2}+\frac{1}{2\pi}\left(3\kappa^2-2\kappa+\frac{217}{3} \right. \\
								       & \quad \left. +\frac{1}{12}\left(\sec^2\left(\frac{\arg\big(4+(\kappa+1)i\big)}{2}\right)+															\sec^2\left(\frac{\arg\big(3+(\kappa+1)i\big)}{2}\right)\right)\right),
		\end{aligned}
\end{equation*}
\begin{equation*}
		\begin{aligned}
				c_{\mathcal{L},2}(15+\kappa)&=\frac{1}{\pi}(15+\kappa) \log \frac{15+\kappa}{e}+\frac{15+\kappa}{2\pi}\left|\log{\frac{N}{4\pi^2}}\right|+\frac{\pi}						{3\log{2}}+\frac{353}{2} \\
				& \quad +\frac{36}{\pi (15+\kappa)}+\left|\frac{\kappa-7}{2}\right |+\frac{72}{2\pi(15+\kappa)} \\
				& \quad -\frac{\log{(15+\kappa)}}{2\pi}\left(\vphantom{\sec^2\left(\frac{\arg\big(3+(\kappa+1)i\big)}{2}\right)}\frac{9\kappa^2-6\kappa+217}{3}+							\right.\\
				& \quad \left.+\frac{1}{12}\left(\sec^2\left(\frac{\arg\big(4+(\kappa+1)i\big)}{2}\right)+\sec^2\left(\frac{\arg\big(3+(\kappa+1)i\big)}{2}\right)\right)						\right) \\
				& \quad+\frac{13}{12(1+\kappa)\log{2}}\left(9\kappa^2-6\kappa+10+\frac{1}{2}\sec^2\left(\frac{\arg\big(-17+(\kappa+1)i\big)}{2}\right)\right) \\
				& \quad+\frac{299}{2\log{2}}\left(\log{(30+2\kappa)}+\left|1-\frac{17i}{\kappa+1}\right|\right) \\
				& \quad+\frac{13}{2\log{2}}\left(2\log{\frac{\pi^2}{6}}+2\max\bigg\{\frac{5}{2}\log{\frac{N}{4\pi^2}},\frac{23}{2}\log{\frac{N}{4\pi^2}} 							\bigg\}+83\right)
		\end{aligned}
\end{equation*}
and
\begin{equation*}
		\begin{aligned}
				c_{\mathcal{L},3}(15+\kappa)&=\frac{13}{12\log{2}}\frac{15+\kappa}{1+\kappa}\left(\vphantom{\sec^2\left(\frac{\arg\big(-17+(\kappa+1)i\big)}{2}					\right)}9\kappa^2-6\kappa+2356\right. \\
				& \quad \left.+\frac{1}{2}\sec^2\left(\frac{\arg\big(-17+(\kappa+1)i\big)}{2}\right)\right).
		\end{aligned}
\end{equation*}
Furthermore, by Remark \ref{remarkc} we also have
\begin{equation*}
				c_1 =\frac{299}{\log{2}},
\end{equation*}

\begin{equation*}
		\begin{aligned}
				c_2 &=\frac{2\pi}{3\log{2}} +923+\frac{\log{2}}{6\pi}\left(\vphantom{\sec^2\left(\frac{\arg\big(4+(\kappa+1)i\big)}{2}\right)}9\kappa^2-6\kappa+217 					\right. \\
				& \quad \left.+\frac{1}{4}\left(\sec^2\left(\frac{\arg\big(4+(\kappa+1)i\big)}{2}\right)+\sec^2\left(\frac{\arg\big(3+(\kappa+1)i\big)}{2}\right)\right)						\right)\\
				& \quad +\frac{13}{\log{2}}\left(\log{\frac{\pi^2}{6}}+\max\bigg\{\frac{5}{2}\log{\frac{N}{4\pi^2}}, \frac{23}{2}\log{\frac{N}									{4\pi^2}} \bigg\}+53\right)
		\end{aligned}
\end{equation*}
and
\begin{equation*}
		\begin{aligned}
				c_3 &=\frac{18}{\pi}+\frac{13(15+\kappa)(17+3\kappa)}{4(1+\kappa)(8+\kappa)\log{2}}\left(\frac{1}{6(1+\kappa)}\left(\vphantom{\frac{1}{2}						\sec^2\left(\frac{\arg\big(-17+(\kappa+1)i\big)}{2}\right)}9\kappa^2-6\kappa+10 \right.\right. \\
				& \quad \left.\left.+\frac{1}{2}\sec^2\left(\frac{\arg\big(-17+(\kappa+1)i\big)}{2}\right)\right)+391\right). 
		\end{aligned}
\end{equation*}
We can see different values of the ceiling function of the numbers $c_{\mathcal{L},1}$, $c_{\mathcal{L},j}(T_0)$ and $c_j$ from Table \ref{table}.

\begin{table}[!h]
\caption{Different values of the terms $c_{\mathcal{L},1}$, $c_{\mathcal{L},j}(T_0)$ and $c_j$}
\label{table}
\begin{center}
\begin{spreadtab}{{tabular}{*{9}{>{$}r<{$}}}}
\toprule
@N & @\kappa & @T_0 & @c_{\mathcal{L},1}  & @c_{\mathcal{L},2}(T_0) & @c_{\mathcal{L},3}(T_0) & @c_1 & @c_2 & @c_3\\
\midrule
1
& 12
& \STcopy{>1,v24}{!15+b2} 
& 293
& 1945
& 11637
& 432
& 1811
& 10506
 \\
1  & 34 & & 769 & 415 & 27478 & 432 & 2141 & 8183\\
1  & 36 & & 835 & 172 & 29742 & 432 & 2187 & 8133 \\
1  & 38 & & 905 & -91 & 32127 & 432 & 2235 & 8092\\
1  & 40 & & 979 & -374 & 34631 & 432 & 2286 & 8060\\
1  & 50 & & 1405 & -2087 & 48918 & 432& 2582 & 7983\\
2  & 8 & & 256 & 2112 &11554 & 432 & 1817 & 12323\\
2 &10 & & 272& 2040  & 11375  & 432 & 1829 & 11247\\
11 &2 & & 229  & 2941 & 21661 & 432 & 1879 & 24239\\
11 &10 &  & 272 & 2113 &  11375 & 432 & 1909 & 11247\\
11 &12 & &  293 & 2047 & 11637 & 432 & 1923 &10506 \\
11 &36 & & 835 & 265 & 29742 & 432 & 2299 & 8133\\
11 &38 & & 905 & 1 & 32127 & 432 & 2347 & 8092\\
11 & 40 & & 979 & -282 & 34631 & 432 & 2399 & 8060\\
21 &6 & & 243 & 2314 &12460 & 432 & 1919 & 14017\\
21 &8 & & 256 & 2214 &11554 & 432 & 1928 & 12323 \\
40 &2 & & 229 & 3000 & 21661 & 432 & 1942 & 24239\\
40 &6 &  & 243 & 2345 &12460 & 432 & 1951 & 14017 \\
40 &36 & & 835 & 317 & 29742 & 432 & 2362 & 8133\\
40 &38 & & 905 & 53 & 32127 & 432 & 2410 & 8092\\
63 &36 & & 835 & 419 & 29742 & 432 & 2460 & 8133\\
63 &38 & & 905 & 155 & 32127 & 432 & 2508 & 8092\\
64 &36 & & 835 & 422 & 29742 & 432 & 2463 & 8133\\
64 &38 & & 905 & 159 & 32127 & 432 & 2512 & 8092\\
64 &40 & & 979 & -125 & 34631 & 432 & 2563 & 8060 \\
\bottomrule
\end{spreadtab}
\end{center}
\end{table}

\clearpage
\bibliographystyle{abbrv}
\bibliography{Number_of_zeros_Selberg_class-Copy}

\end{document}